\documentclass[12pt,english,reqno]{amsart}
\usepackage{amscd,amssymb,amsmath,amstext,amsthm,amsfonts,bbold,wasysym}
\usepackage[dvips]{graphicx}
\usepackage{mathrsfs}
\usepackage{float}

\pagestyle{headings}
\hyphenation{schwar-zian}
\usepackage{stmaryrd}
\usepackage{a4wide}
\textheight=22cm


\usepackage[pdftex]{hyperref}
\usepackage[usenames]{color}
\usepackage[ansinew]{inputenc}

\newtheorem{Theorem}{Theorem}
\newtheorem*{theorem}{Theorem}

\newtheorem{maintheorem}{Theorem}

\newtheorem{T}{Theorem}
\newtheorem{Corollary}[T]{Corollary}
\newtheorem{Proposition}[T]{Proposition}
\newtheorem{Lemma}[T]{Lemma}

\newtheorem{Definition}[T]{Definition}
\newtheorem*{definition}{Definition}
\newtheorem{Example}[T]{Example}
\newtheorem*{claim}{Claim}
\newtheorem*{subclaim}{Subclaim}
\newtheorem{Claim}{Claim}

\def \BB {{\mathbb B}}

\def \RR {{\mathbb R}}

\def \NN {{\mathbb N}}

\def \XX {{\mathbb X}}

\def \RE {{\mathrm{Re}}}

\def \cf {\mathcal{F}}

\def \cp {\mathcal{P}}
\def \cc {\mathcal{C}}

\def \cu {\mathcal{U}}
\def \co {\mathcal{O}}

\def \cv {\mathcal{V}}
\def \cw {\mathcal{W}}

\newcommand{\Agemo}{\operatorname{\text{\agemO}}}

\newcommand{\leb}{\operatorname{Leb}}

\newcommand{\per}{\operatorname{Per}}

\newcommand{\interior}{\operatorname{Interior}}
\newcommand{\fix}{\operatorname{Fix}}

\begin{document}

\author{P. Brand\~ao}
\address{P. Brand\~ao\\
Impa, Estrada Dona Castorina, 110, 22460-320 Rio de Janeiro, Brazil.} \email{paulo@impa.br}

\author{J. Palis}
\address{J. Palis\\
Impa, Estrada Dona Castorina, 110, 22460-320 Rio de Janeiro, Brazil.} \email{jpalis@impa.br}

\author{V. Pinheiro}
\address{V. Pinheiro\\
Departamento de Matem\'atica, Universidade Federal da Bahia\\
Av. Ademar de Barros s/n, 40170-110 Salvador, Brazil.}
\email{viltonj@ufba.br}

\date{\today}


\title{On the Finiteness of Attractors for One-Dimensional Maps with Discontinuities}

\maketitle
\begin{abstract}

In the present paper we show that  piecewise $C^3$ maps of the interval with negative Schwarzian derivative display only a finite number of attractors, extending results in \cite{BL89eg, BL91, Ly91, vSV, StP}. We also give a more precise description of the attractors for contracting Lorenz maps.

\end{abstract}

\tableofcontents

\section{Introduction}

Attractors play a fundamental role in the study of dynamical systems for the understanding of future evolution of initial states. Along this line, stressing the importance of attractors, it was conjectured by Palis in 1995 (see \cite{Pa00,Pa2005}) that, in compact smooth manifolds, there is a dense set $D$ of differentiable dynamics such that, among other properties, any element of $D$ display finitely many attractors whose union of basins of attraction has total probability in the ambient manifold. The conjecture was built in such a way that, if proved to be true, one can then concentrate the attention on the description of the properties of these finitely many attractors and their basins of attraction to have an understanding on the whole system.

This kind of scenario appeared earlier in the works of Andronov-Pontrjagin \cite{AP} for flows on 2-dimensional discs transversal to the boundary, Peixoto \cite{Pe} for smooth flows on compact orientable surfaces and Palis-Smale \cite{PaSm} for smooth gradient flows in higher dimensions. Besides these results, the corresponding framework for flows in dimension three or more is wide open. For diffeomorphisms  in two or higher dimensions the question is in general equally wide open, except for $C^1$ diffeomorphisms on compact boundaryless 3-dimensional manifolds, see Crovisier-Pujals \cite{CP14}. On the way to prove the above conjecture, one has to face the existence of open sets in the space of diffeomorphisms containing residual sets of dynamics displaying infinitely many attractors, as shown by Newhouse \cite{Ne}. This fact does not contradict the existence of a dense set of dynamics having only a finite number of attractors, but certainly shows how delicate such a question is, see \cite{Ka, NP, PT, PaYo}.

On the other hand, for $C^r$ maps of the interval, $r\ge1$, the above conjecture has been proved by Kozlovski-Shen-van Strien in \cite{KSS}, as they have shown that the hyperbolic maps are dense.
The denseness of hyperbolic maps had been previously established for the real quadratic case by Lyubich in \cite{lyubich:1997ha} and for the parametrized logistic family $f_t(x)=4 t x(1-x)$ by Graczyk-Swiatek in \cite{GS97}. We observe that quadratic maps, in particular the logistic one, have negative Schwarzian derivative, as defined in Section \ref{SubsectionStaofMTh}.

%
%
%
%
%
%


For one-dimensional dynamics, due to the simplicity of the topology, one may ask if the finiteness of the number of attractors is a condition fulfilled by all maps, instead of only a dense subset of dynamics as conjectured for higher dimensions. The question has a negative answer, since maps that do not have negative Schwarzian derivative may display an infinite number of the so-called inessential attracting periodic orbits. So, the problem here is if we always have a finite number of attractors for maps with negative Schwarzian derivative, even when we allow for discontinuities. 

A $C^2$ map of the interval is said to be {\em non-flat} if for each critical point $c$ of the map, there is a local diffeomorphism $\Phi_c \in C^2$, and $\alpha >1$ such that $\Phi_c^{-1} \circ f \circ \Phi_c(x)=\pm |x|^\alpha$ for all $x$ close to $c$.

We point out that maps of the interval with discontinuities naturally arise from  vector fields. Indeed, piecewise $C^r$ maps, $r \ge 1$, that are piecewise monotone and non-flat can be obtained as the quotient by stable manifolds of a Poincar\'e map of some $C^r$ dissipative flow. In Figure \ref{fluxoDuasSing.png} we have sketched a flow giving rise to a piecewise $C^r$ one-dimensional map with two discontinuities, see \cite{Gambaudo:1986p2422,GW,Rov93}.


The finiteness of the number of attractors for $C^3$ non-flat maps of the interval with negative Schwarzian derivative began to be established by the pioneering work of Blokh-Lyubich in \cite{BL89eg, BL91} and Lyubich \cite{Ly89}.
They have shown that  if $f:[0,1]\to[0,1]$ is a  $C^3$ non-flat map
with a single critical point and negative Schwarzian derivative, then there is an attractor whose basin of attraction contains Lebesgue almost every point of the interval $[0,1]$.
Main contributions were also due to de Melo-van Strien \cite{MS89}, Guckenheimer-Johnson \cite{GJ}, Keller \cite{Ke}, Graczyk-Sands-Swiatek \cite{SGS} and others.


For smooth non-flat maps of the interval, Lyubich \cite{Ly91} proved the finiteness of non-periodic attractors. More recently, van Strien-Vargas \cite{vSV} sharpened the classification of these attractors.

In contrast with the smooth case, the problem of finiteness of the number of attractors remained open for maps with discontinuities or other kind of lack of regularity, and this is exactly what is treated in the present paper. Former partial answers to this question include Lorenz maps: Guckenheimer-Williams solved the expanding case in \cite{GW} and, later, St. Pierre in \cite{StP}, the contracting one.

The main result in this paper is that every piecewise $C^3$ map with negative Schwarzian derivative can only display a finite number of attractors. Furthermore, the union of the basins of attraction contains Lebesgue almost every point. We do so without having to decompose the domain of the map into ergodic components, and this is a key fact in our work. Former proofs of the finiteness of the number of attractors usually went through showing the non-existence of wandering intervals and the decomposition of the domain into ergodic components. However, for maps with discontinuities that are at the same time critical points, the existence of wandering intervals is in general an open question. And, although wandering intervals are an obstruction to the decomposition into ergodic components, the methods presented here still allow us to bound the number of attractors.




\begin{figure}
\begin{center}\includegraphics[scale=.3]{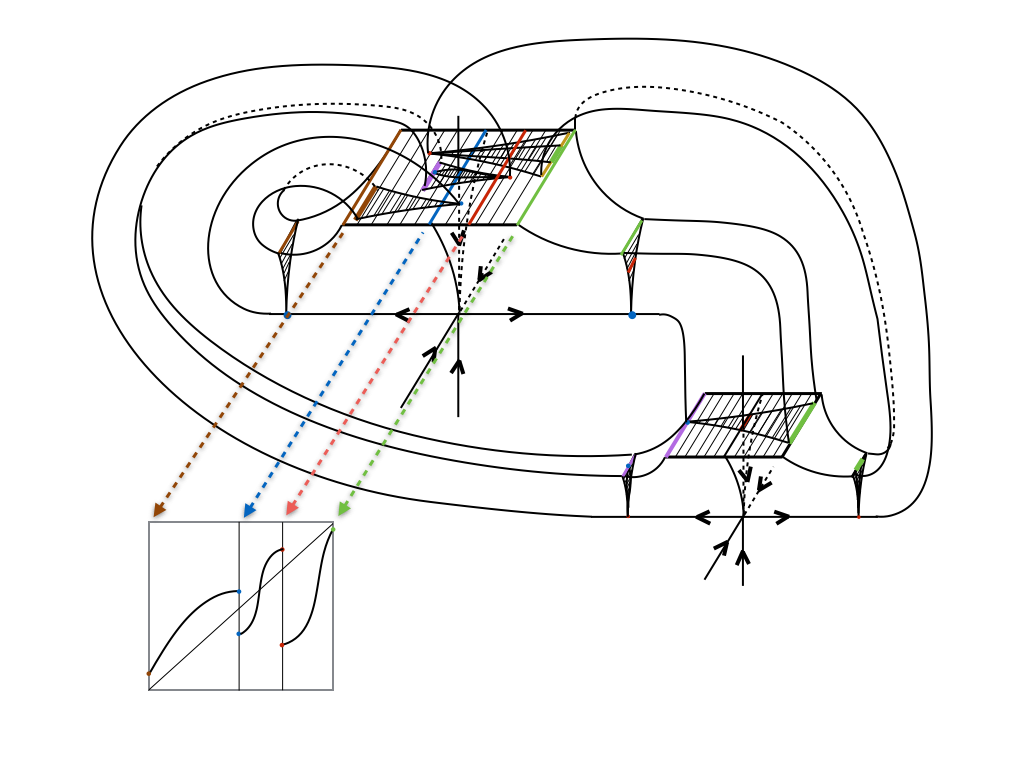}
\caption{An example of a flow displaying two coupled singularities and inducing an one-dimensional map with two discontinuities.}\label{fluxoDuasSing.png}
\end{center}
\end{figure}

\subsection{Statement of the main results}\label{SubsectionStaofMTh}

Given a smooth compact manifold $M$, possibly with boundary, we say, following Milnor \cite{Milnor:1985ut}, that a compact subset $A\subset M$ is a {\em metrical attractor} for a map $f: M \to M$ if its basin of attraction $\beta_{f}(A):=\{x\,;\,\omega_f(x)\subset A\}$ has positive Lebesgue measure and there is no strictly smaller closed set $A' \subset A$ so that $\beta_{f}(A')$ is the same as $\beta_{f}(A)$ up to a measure zero set. Here, $\omega_f(x)$ is the positive limit set of $x$, that is, the set of accumulating points of the forward orbit of $x$.

An attractor is called {\em minimal} if $\leb(\beta_f(A'))=0$ for every compact set $A'\subsetneqq A$. If $A$ is a minimal attractor, then $\omega_f(x)=A\text{ for almost all }x\in\beta_f(A)$.


In this work, we deal with piecewise $C^3$ maps of the interval into itself with negative Schwarzian derivative. More precisely, we consider maps $f:[0,1] \to [0,1]$ that are $C^3$ local diffeomorphisms with negative Schwarzian derivative, i.e., $Sf(x)=\frac{f'''(x)}{f'(x)}-\frac{3}{2}\big(\frac{f''(x)}{f'(x)}\big)^2<0$, in the whole interval, except for a finite set of points $\cc_f\subset (0,1)$. This {\em exceptional set} contains all the critical points of $f$, which may or may not be flat, as well as discontinuities of the map.




\begin{maintheorem}\label{mainTheoremMTheoFofA}
Let $f:[0,1]\to[0,1]$, be a $C^3$ local diffeomorphism with negative Schwarzian derivative in the whole interval, except for a finite set $\cc_f\subset(0,1)$. Then, there is a finite collection of  attractors $A_1,\cdots,A_n$, such that $$\leb(\beta_f(A_1)\cup\cdots\cup\beta_f(A_n))=1.$$
Furthermore, for almost all points $x$, we have $\omega_f(x)=A_j$ for some $j=1,\cdots,n$.
\end{maintheorem}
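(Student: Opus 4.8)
The plan is to split the statement into a \emph{finiteness} part --- there are only finitely many candidate attractors --- and a \emph{classification} part --- the set $\omega_f(x)$ of Lebesgue almost every point $x$ is one of them. For the finiteness part I would show that every metric attractor falls into one of three classes: (I) an attracting, or one-sidedly attracting, periodic orbit; (II) a \emph{cyclic interval}, i.e.\ a union $J=J_1\cup\cdots\cup J_k$ of intervals with pairwise disjoint interiors that $f$ permutes cyclically and on which $f$ acts topologically transitively; (III) a set of the form $\omega_f(c^{+})$ or $\omega_f(c^{-})$ for some $c\in\cc_f$ (a minimal Cantor set of solenoidal type, or a ``wild'' Cantor set, in the cases that actually occur). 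Granting the classification part, one then takes $A_1,\dots,A_n$ to be the finite list of those attractors of classes (I)--(III) whose basin has positive Lebesgue measure; since a.e.\ $x$ has $\omega_f(x)$ in this list, the identity $\leb(\beta_f(A_1)\cup\cdots\cup\beta_f(A_n))=1$ and the last sentence of the theorem follow at once.

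Class (I) is bounded by a Singer-type argument: the negative Schwarzian derivative and the minimum principle force the immediate basin of each attracting or one-sidedly attracting periodic orbit to have in its closure either an endpoint of $[0,1]$ or a point whose forward orbit meets $\cc_f$, and each such point ``serves'' only boundedly many periodic orbits, so their number is bounded in terms of $|\cc_f|$. Class (III) is trivially bounded by $2|\cc_f|$. For class (II), the key observation is that a cyclic interval $J$ must meet $\cc_f$: otherwise each branch $f|_{J_i}$ is a continuous local diffeomorphism of an interval, hence strictly monotone, so $f^{k}|_{J_1}$ is an interval homeomorphism, which is never topologically transitive --- contradicting transitivity on $J$. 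Since distinct cyclic intervals have pairwise disjoint interiors (again by transitivity), and each meets the finite set $\cc_f$, there are at most $2|\cc_f|$ of them.

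The substance of the proof is the classification part, and I would split it according to whether $\omega_f(x)$ meets $\cc_f$. If $\omega_f(x)\cap\cc_f=\emptyset$, then $\omega_f(x)$ is a compact invariant set on which $f$ is a $C^{2}$ local diffeomorphism with negative Schwarzian derivative, so by a Ma\~n\'e-type dichotomy (for invariant sets disjoint from the critical set) it is hyperbolic repelling, or contains an attracting/neutral periodic orbit, or contains --- hence equals --- a cyclic interval; in the repelling case the uniform expansion along the orbit of $x$, together with the Koebe distortion furnished by $Sf<0$, shows by a density argument that the set of such $x$ is Lebesgue-null (a hyperbolic repeller disjoint from $\cc_f$ cannot be the $\omega$-limit set of a positive-measure set). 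If instead $\omega_f(x)\cap\cc_f\neq\emptyset$ and $\omega_f(x)$ is neither a periodic attractor nor a cyclic interval, I would fix $c\in\omega_f(x)\cap\cc_f$ and study the first-return maps of $f$ to a nested sequence of \emph{nice} intervals $I^{(1)}\supsetneq I^{(2)}\supsetneq\cdots$ shrinking to $c$: niceness yields uniform Koebe bounds for every return branch, and the return domains exhaust each $I^{(j)}$ up to a set that is either wandering or leaves $I^{(j)}$ forever. Iterating this induction and tracking where almost every point goes, one concludes that almost every such $x$ is eventually captured by the structure generated by the orbit of $c$, so that $\omega_f(x)=\omega_f(c^{\pm})$, an attractor of class (II) or (III). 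Organising the argument around nice intervals and Lebesgue density, rather than around an ergodic decomposition of an absolutely continuous invariant measure, is essential: no such measure need exist here, and wandering intervals may genuinely occur when a critical point coincides with a discontinuity, so the classical route is unavailable.

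The hard part is precisely this last step: controlling the combinatorics and the geometry of the first-return maps to nice intervals when finitely many discontinuities are present, establishing the real (Koebe) bounds for return branches that straddle points of $\cc_f$, and extracting from the induction the trichotomy ``renormalises to a Cantor attractor / expands to a cyclic interval / escapes to another attractor'' purely through Lebesgue-density estimates. The most delicate sub-case is that of a flat critical point that is simultaneously a discontinuity, where wandering intervals cannot be excluded a priori; showing that such a point still contributes only the single attractor $\omega_f(c^{\pm})$, and nothing worse, is what drives the whole density-based approach and replaces the missing ergodic decomposition.
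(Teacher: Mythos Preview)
Your high-level decomposition --- finitely many periodic-like attractors via Singer, finitely many cycles of intervals because each contains a point of $\cc_f$, and then a ``Cantor/wild'' class traced by critical orbits --- matches the paper's, and your observation that the argument must be density-based rather than via an ergodic decomposition is exactly right. However, the method you propose for the hard step is not the one the paper uses, and it has a genuine gap.

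Your plan for the case $\omega_f(x)\cap\cc_f\neq\emptyset$ is to fix $c\in\omega_f(x)\cap\cc_f$ and run a nested sequence of nice intervals $I^{(1)}\supsetneq I^{(2)}\supsetneq\cdots$ shrinking to $c$, with uniform Koebe control on the first-return branches. This is the classical route for smooth multimodal maps, but it is precisely what breaks down here. When $c$ is a discontinuity, the first-return branches adjacent to $c$ need not be full (they map onto proper subintervals of $I^{(j)}$), and the diffeomorphic extension of a branch required for Koebe space can terminate at a pre-image of some \emph{other} point of $\cc_f$; with several discontinuities there is no a priori reason the branches straddling $\cc_f$ carry any uniform distortion bound. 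You also need the $I^{(j)}$ to shrink to $c$, and in the presence of wandering intervals touching $c$ this may fail outright. Your last paragraph names this as ``the hard part'' but does not supply a mechanism.

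The paper avoids nice-interval induction entirely. Its engine is an \emph{Interval Dichotomy Lemma}: for any interval $I$ disjoint from the forward orbit of the lateral critical values $\cv_f$, either $\omega_f(x)\cap I=\emptyset$ for a.e.\ $x\in I\setminus\BB_0(f)$, or $\omega_f(x)\supset I$ for a.e.\ $x\in I$. On such an $I$ every first-return branch is automatically onto $I$, so Koebe applies without any renormalisation tower. This immediately gives $\omega_f(x)\subset\bigcup_{\alpha\in\CC_f}\overline{\RE(\co_f^+(\alpha))}$ for a.e.\ $x$ outside $\BB_0\cup\BB_1$. The paper then sharpens this by a \emph{parallax argument}: one constructs an auxiliary map $g$, equal to $f$ outside a small interval but with a different exceptional set, so that $\omega_g(x)=\omega_f(x)$ on the set of interest; applying the dichotomy corollary to both $f$ and $g$ and intersecting pins down $\omega_f(x)$ exactly as $\bigcup_{\alpha^{\star}\in\omega_f(x)}\overline{\RE(\co_f^+(\alpha^{\star}))}$, where $\alpha^{\star}$ is a ``shadow'' of $\alpha$ designed to absorb the wandering-interval case (the \emph{Ewi attractors}). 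The finiteness then drops out by counting subsets of $\CC_f$.

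Two further inaccuracies in your class (III): the attractor is in general a \emph{union} $\bigcup_{\alpha\in U}\overline{\RE(\co_f^+(\alpha^{\star}))}$ over a subset $U\subset\CC_f$, not a single $\omega_f(c^{\pm})$; and the correct object is the closure of the forward orbit of the lateral critical value (or of its shadow $\alpha^{\star}$), not $\omega_f(c^{\pm})$. The wandering-interval case requires the separate Ewi-attractor bookkeeping, which your outline does not provide.
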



Notice that we are not assuming the critical points to be non-flat, as previously assumed in  \cite{Ly91,vSV}. In particular, we are not requiring the map to have any symmetry around any given critical point: after a change of coordinates, the degree of the map at the critical point does not have to be the same at the left and right sides.

To state Theorem~\ref{mainTheoremMTheoLORB} below, we introduce the notion of cycle of intervals, namely a transitive finite union of non-trivial closed intervals. It is a common type of attractor for maps of the interval associated to the existence of an absolutely continuous invariant measure.


A point $p\in[0,1]$ is called {\em right-periodic} with period $n$ for $f$ if $n$ is the smallest integer $\ell\ge1$ such that $p=\lim_{0<\varepsilon\to0}\sup\{f^{\ell}((p-\varepsilon,p))\cap[0,p)\}$.
Similarly, we define {\em left-periodic} points and a point $p$ is {\em periodic-like} if it is left or right-periodic.

We shall deal with two types of {\em finite minimal attractors}: ordinary {\em attracting periodic orbits}, and the ones that contain at least one point of the exceptional set $\cc_f$: the {\em attracting periodic-like orbits}.



When we modify the eigenvalues $\lambda_2<\lambda_3<0<\lambda_1$ of the geometric Lorenz attractor \cite{ACT}, replacing the usual expanding condition $\lambda_3+\lambda_1>0$ by a contracting condition $\lambda_3+\lambda_1<0$, it gives rise to a flow that induces a one-dimensional map called contracting Lorenz map.

We then define a {\em contracting Lorenz map} as an orientation preserving $C^2$ local diffeomorphism $f:[0,1]\setminus\{c\}\to[0,1]$, $0< c< 1$, such that $0$ and $1$ are fixed points and $f$ has no repelling fixed points in $(0,1)$.
If such a map is $C^3$ with negative Schwarzian derivative and has no periodic-like attractors, we show that it has only a single attractor whose basin of attraction has full Lebesgue measure.

Notice that our result extends the one in \cite{StP}, as we do not require the non-flatness condition of the critical point, and such a condition is implicitly assumed in Lemma 3.36  in that work.

\begin{maintheorem}
\label{mainTheoremMTheoLORB}
Let $f$ be a $C^3$ contracting Lorenz map $f:[0,1]\setminus\{c\}\to[0,1]$, $c\in(0,1)$, with negative Schwarzian derivative. If $f$ does not have periodic-like attractors, then $f$ has an  attractor $A$ such that $\omega_f(x)=A$ for almost every $x\in[0,1]$. In particular, $\leb(\beta_{f}(A))=1$.

Furthermore, $f$ can have at most two periodic-like attractors. If $f$ has a single periodic attractor, its basin of attraction has full Lebesgue measure. The case that $f$ has two periodic-like attractors, the union of their basins of attraction has full Lebesgue measure.

If $f$ does not have periodic-like attractors, then $A$ is either a cycle of intervals or a transitive Cantor set.

If $A$ is a Cantor set, then $A=\omega_f(c_-)$ or $\omega_f(c_+)$. Moreover, if $f$ is non-flat (see Definition~\ref{Non-flat} in the Appendix) and $A$ is a Cantor set, then $c_-$ and $c_+\in A=\omega_f(c_-)=\omega_f(c_+)$.
\end{maintheorem}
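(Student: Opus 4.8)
The plan is to deduce Theorem~\ref{mainTheoremMTheoLORB} from the general finiteness theorem, Theorem~\ref{mainTheoremMTheoFofA}, sharpened by the rigid combinatorics of a contracting Lorenz map: it is orientation preserving, has two fixed endpoints, and has a \emph{single} discontinuity $c$, which nonetheless carries the two one-sided critical points $c_-$ and $c_+$. Since $\cc_f=\{c\}$, Theorem~\ref{mainTheoremMTheoFofA} already produces a finite list of attractors $A_1,\dots,A_n$ with $\leb\big(\bigcup_j\beta_f(A_j)\big)=1$ and, for a.e.\ $x$, $\omega_f(x)=A_j$ for some $j$; moreover, as in Theorem~\ref{sunimodal}, each $A_j$ is either finite (an attracting periodic or periodic-like orbit) or infinite (a cycle of intervals or a minimal Cantor set). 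The argument then parallels the one for Theorem~\ref{sunimodal}, with the single critical point replaced by the pair $\{c_-,c_+\}$: I must show there are at most two finite attractors, at most one infinite attractor, and that an infinite attractor forces $\leb(\beta_f(A))=1$.

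First I would bound the finite attractors. A periodic-like attractor contains $c$, and $c$ is left-periodic for at most one period and right-periodic for at most one period, so there are at most two periodic-like attractors, governed by $\co_f^+(c_-)$ and $\co_f^+(c_+)$ respectively. For an ordinary attracting periodic orbit one invokes the minimum principle for negative Schwarzian derivative (Singer's argument): the immediate basin of one of its periodic points contains a point of $\cc_f=\{c\}$ or accumulates on $\{0,1\}$; an elementary analysis of the branch structure of a contracting Lorenz map (absence of repelling fixed points in $(0,1)$, with $f$ increasing on each side of $c$ and $f(0)=0$, $f(1)=1$) then shows that every finite attractor in fact absorbs one of the one-sided neighborhoods of $c$, i.e.\ has $c_-$ or $c_+$ in its basin. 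Since $c_-$ and $c_+$ can each lie in the basin of only one attractor, there are at most two finite attractors, and when there are two their basins contain the two sides of $c$.

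Next I would analyse infinite attractors. Let $A$ be one. By Mañé-type hyperbolicity away from the critical orbit — a point whose forward orbit eventually stays bounded away from $c$ lies in a uniformly expanding set, hence is attracted to a periodic attractor or lies in a null set — a.e.\ point $x$ with $\omega_f(x)$ infinite has $\omega_f(x)$ meeting every neighborhood of $c$, from both sides. Since $\leb(\beta_f(A))>0$, this forces $c$ into the interior of $A$ (when $A$ is a cycle of intervals, because a transitive cycle of intervals must contain a turning point) or into the Cantor set $A$; in either case the one-sided values $v_\pm:=\lim_{\varepsilon\downarrow0}f(c\pm\varepsilon)$ lie in $A$, so $\co_f^+(c_\pm)\subset A$, and by minimality (Cantor case) or transitivity (cycle case) $\omega_f(c_-)=\omega_f(c_+)=A$. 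Thus an infinite attractor absorbs \emph{both} sides of $c$, so by the preceding paragraph $f$ can have no other attractor of positive-measure basin; hence $n=1$, $\leb(\beta_f(A))=1$, and in particular two distinct infinite attractors cannot coexist. In the Cantor case this yields $A=\overline{\co_f^+(c_-)}=\overline{\co_f^+(c_+)}=\overline{\co_f^+(c_-)\cup\co_f^+(c_+)}$. Assembling the cases: if $f$ has no finite attractor, there is exactly one infinite attractor with full-measure basin, as above; if $f$ has one finite attractor $A_1$, it absorbs only one side of $c$, and any further attractor would be infinite and would have to absorb both sides — impossible — so $\leb(\beta_f(A_1))=1$; if $f$ has two finite attractors, they absorb both sides of $c$, no infinite attractor can exist, and the union of their basins is full.

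The main obstacle is the step asserting that an infinite attractor absorbs both sides of $c$, equivalently ruling out the coexistence of an infinite attractor with any other attractor. This cannot be done by soft measure-theoretic arguments; it requires the genuinely one-dimensional combinatorics — the single discontinuity, the orientation-preserving branch structure, and the transitivity/minimality of the infinite attractor — and, most delicately, making the Mañé hyperbolicity dichotomy and the renormalization picture run verbatim for a discontinuous and possibly flat $c$ in place of a smooth non-flat critical point. That adaptation is where the technology built up earlier in the paper is indispensable.
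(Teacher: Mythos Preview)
Your overall architecture is sound and matches the paper's in spirit: reduce everything to the claim that, outside periodic-like basins, almost every orbit accumulates on $c$ from \emph{both} sides. You correctly flag this as the main obstacle. The gap is that you never prove it, and Ma\~n\'e's theorem does not give it: Lemma~\ref{CorollaryMane} (and its refinement Corollary~\ref{CorollaryTirandoMane}) only yield $c\in\omega_f(x)$, i.e.\ accumulation from at least one side. A priori one could have a positive-measure set of points whose orbits approach $c$ only from the left, and another set approaching only from the right, producing two distinct Cantor attractors $\overline{\co_f^+(c_-)}$ and $\overline{\co_f^+(c_+)}$ --- exactly the scenario Theorem~\ref{Theorem1aa} fails to exclude (it allows $[\CC_f]_x$ to be any nonempty subset of $\{c-\ii,c+\ii\}$, whence up to three candidate Cantor attractors; the paper itself notes that Theorem~\ref{mainTheoremMTheoFofA} plus Singer give only the bound three). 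The same gap breaks your single-periodic-attractor case: you need that a hypothetical second, infinite attractor would capture both $c_-$ and $c_+$, but you have no mechanism to force $c_+\in\omega_f(x)$ for a positive-measure set once $(c,c+\varepsilon)\subset\beta_f(A_1)$.

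The paper does not route the proof through Theorem~\ref{mainTheoremMTheoFofA}; it builds a bespoke tool. The key is Lemma~\ref{Lemma90102555}: given a suitable half-nice interval $(a,b)\ni c$ with $a$ periodic and $c_-$ recurrent, it constructs (by an infinite inductive refinement of the first-return map on $(a,c)$) an induced map $F:U\to(a,b)$, $U\subset(a,c)$, whose \emph{every} branch maps onto the full interval $(a,b)$ --- crucially crossing $c$. Extending $F$ arbitrarily on $(c,b)$ and feeding it into Lemma~\ref{Lemma375945194558} forces $\omega_F(x)=[a,b]$ for a.e.\ $x$ that returns infinitely often to $(a,c)$, hence $c_+\in\omega_f(x)$. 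This is the engine of Proposition~\ref{PropositioTudoNoCritico} (no periodic attractors) and of the contradiction in Proposition~\ref{PropositionPerLikLor} (one periodic attractor); both hinge on this induced-map construction, which your sketch does not supply. As a side effect Proposition~\ref{PropositioTudoNoCritico} also rules out Ewi attractors for contracting Lorenz maps, something your appeal to Theorem~\ref{mainTheoremMTheoFofA} leaves open and which is needed for the clean identification $A=\overline{\co_f^+(c_-)\cup\co_f^+(c_+)}$.
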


\subsection{An outline of the paper} 

In Section \ref{SubsectionStaofMTh} we have stated the main results of the paper, namely Theorems A and B. In Section \ref{Sectionsettingsandpre}, we present some  basic results in one-dimensional dynamics 
and introduce some notation to deal with lateral limits and lateral periodic orbits. We also present some facts on cycles of intervals.

In Section \ref{Sectionmarkovmaps}, we prove the Interval Dichotomy Lemma (Lemma \ref{tudoounadadershchw}), which is one of the new main ingredients of the paper. Firstly, we make a brief study of the ergodicity with respect to Lebesgue measure of complete Markov maps adapted to our context (Lemmas~\ref{pagina1} and \ref{Lemma375945194558}) and use it to prove Lemma \ref{tudoounadadershchw}. As a consequence, the $\omega$-limit set of almost every point $x\in[0,1]$ outside the basins of attraction of the periodic-like attracting orbits and the cycles of intervals, is contained in the closure of the union of the future orbits of the exceptional set (Corollary~\ref{Cor09876111}). This fact leads to a restriction on the locus of the $\omega$-limit of almost every point and this  information is crucial in the subsequent sections.

In Section~\ref{SecMainProof}, we make use of a sort of ``parallax argument'', that is, we evaluate the position of an object by comparing two different viewpoints of this object. In our context, the object to be  considered is the $\omega$-limit set of typical orbits, indeed some conveniently chosen subset  $V$ of it. The first viewpoint is given by the original map $f$. To get the second one, we construct a suitable auxiliary map $g$ that has a distinct exceptional set $\cc_g\ne\cc_f$, but keeps unchanged the $\omega$-limit set of the points in $V$.
 Comparing the $\omega$-limit set of the points of $V$ with respect to $f$ and $g$, we are able to make precise the $\omega$-limit set of the points that are not attracted by periodic-like attracting orbits or cycles of interval (Theorem~\ref{Theorem1aa}). With that, we prove Theorem~\ref{mainTheoremMTheoFofA}. 


The least section, Section~\ref{SectionContractingLorenzmaps}, is dedicated to the contracting Lorenz maps and the proof of Theorem~\ref{mainTheoremMTheoLORB}. We have as its main ingredient  the induced map provided by Lemma~\ref{Lemma90102555}.



\section{Notation, setting and preliminary facts}
\label{Sectionsettingsandpre}

Let $f:[0,1]\setminus\cc_f\to[0,1]$ be a $C^1$ local diffeomorphism. Without loss of generality, we can assume that $\cc_f\subset(0,1)$ and that $f(\{0,1\})\subset\{0,1\}$. Indeed, we can extend $f$ to a $C^3$ map $g;[-1,2]\setminus\cc_g\to[-1,2]$, with $\cc_g=\{0,1\}\cup\cc_f$, $Sg<0$, $g(\{-1,2\})\subset\{-1,2\}$ and $\omega_f(x)\subset[0,1]$ for every $x\in(-1,0)\cap(1,2)$. Notice that, in this case, the  attractors of $f$ and $g$ are the same.

  Given a set $U\subset[0,1]$, the pre-orbit of $U$ is $\co_f^-(U):=\bigcup_{j\ge0}f^{-j}(U)$, where $f^{-1}(U)=\{x\in[0,1]\setminus\cc_f\,;\,f(x)\in U\}$ and, for $j\ge1$, $f^{-j}(U)$ is inductively given by $f^{-j}(U)=f^{-1}(f^{-(j-1)}(U))$.



Given $p\in[0,1]$, we set
 $$f^n(p_\pm)=
\lim_{0<\varepsilon\searrow0}f^n(p\pm \varepsilon)$$
$$\co_f^+(p_\pm)=\{f^n(p_\pm)\,;\,n\ge0\}$$
and
$$\omega_f(p_\pm)=\{y\,;\,\exists\,n_j\to\infty\text{ s.t. }y=\lim_{j\to\infty}f^{n_j}(p_\pm)\}.$$

For any point $x\in[0,1]$, we define the {\em forward (or positive) orbit} of $x$ as $$\co_f^+(x)=\co_f^+(x_-)\cup\co_f^+(x_+).$$
Notice that, if $x\notin\co_f^-(\cc_f)$, then $\co_f^+(x)=\{f^n(x)\,;\,n\ge0\}$.

A point $p\in[0,1]$ is called {\em periodic} if there exists $n\ge1$ such that $f^n(x_-)=f^n(x_+)=x$. We denote the set of periodic points of $f$ by $\per(f)$.

The {\em omega-limit set} of a point $p\in[0,1]$ is defined as $$\omega_f(p)=\omega_f(p_-)\cup\omega_f(p_+).$$

Given any $U\subset[0,1]$, we denote the forward orbit of $U$ by $\co_f^+(U)$, that is, $$\co_f^+(U)=\bigcup_{x\in U}\co_f^+(x).$$

A set $U\subset[0,1]$ is called a {\em forward (or positive) invariant set} if $\co_f^+(x)\subset U$ for all $x\in U$. It easy to check that $\omega_f(x)$ and also $\overline{\co_f^+(x)}$ are positive invariant sets for every $x\in[0,1]$.

Given $x\in[0,1]\setminus\co_f^-(\cc_f)$ and $p\in[0,1]$, we say that $$p_-\in\omega_f(x)\text{ if }p\in\overline{\co_f^+(x)\cap[0,p)}.$$ Similarly,  $$p_+\in\omega_f(x)\text{ if }p\in\overline{\co_f^+(x)\cap(p,1]}.$$

Given an open subset $U\subset[0,1]$ and a point $p\in[0,1]$, we say that $p_-\in U$ if $\exists\,\varepsilon>0$ such that $(p-\varepsilon,p)\subset U$. Similarly, $p_+\in U$ if $\exists\,\varepsilon>0$ such that $(p,p+\varepsilon)\subset U$.

For any $n\ge1$, and $a,b\in[0,1]$, we consider the following notation:
$$f^n(a_+)=b_+\stackrel{\text{definition}}{\iff}f^n(x)\searrow b\text{ when }x\searrow a$$
$$f^n(a_+)=b_-\stackrel{\text{definition}}{\iff}f^n(x)\searrow b\text{ when }x\nearrow a$$
$$f^n(a_-)=b_+\stackrel{\text{definition}}{\iff}f^n(x)\nearrow b\text{ when }x\searrow a$$
$$f^n(a_-)=b_-\stackrel{\text{definition}}{\iff}f^n(x)\nearrow b\text{ when }x\nearrow a$$

A {\em left periodic-like attractor} is an attractor $A$ such that $A=\{p, f(p_-),\cdots,f^{n-1}(p_-)\}$, for some $p\in[0,1]$ and $n\ge1$ with $f^{n}(p_-)=p_-$, and such that $$\leb(\{x\in\beta_f(A)\,;\,p_-\in\omega_f(x)\})>0.$$
Analogously, we define a {\em right periodic-like attractor}. Notice that any attracting periodic-like orbit is a left periodic-like attractor or a right periodic-like attractor. A saddle-node is either a left or a right periodic-like attractor, but in general, an attracting periodic-like attractor can be both a left and a right periodic-like attractor.

\begin{definition}[Nice intervals, \cite{Ma}]
An interval $J=(a,b)\subset [0,1]$ is called {\em nice} with respect to $f$ if $\co_f^+(a)\cap J=\co_f^+(b)\cap J=\emptyset$. Notice that this means that $\co_f^+(a_\pm)\cap J=\co_f^+(b_\pm)\cap J=\emptyset$.
\end{definition}

We now briefly quote some basic results that we shall use in the sequel.  
For that, let us denote by $\BB_0(f)$ the union of the basin of attraction of all attracting periodic-like orbits, that is, all left and all right periodic-like attractors.


A {\em homterval} is an open interval $I=(a,b)$ such that $f^n|_I$ is a homeomorphism for $n\ge1$. This is equivalent to assume that $I\cap\co_f^-(\cc_f)=\emptyset$. A homterval $I$ is called a {\em wandering interval} if $I\cap\BB_0(f)=\emptyset$ and $f^j(I)\cap f^k(I)=\emptyset$ for all $1\le j<k$.

\begin{Lemma}[Homterval Lemma, see \cite{MvS}]\label{LemmaHomterval}
Let $I=(a,b)$ be a homterval of $f$. If $I$ is not a wandering interval, then $I\subset\BB_0(f)\cup\co_f^{-}(\per(f))$. Furthermore, if $f$ is $C^3$ with $Sf<0$, and $I$ is not a wandering interval, then the set $I\setminus\BB_0$ has at most one point.
\end{Lemma}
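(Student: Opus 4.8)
The plan is to produce, from the non-wandering hypothesis, a forward-invariant homterval on which a suitable iterate of $f$ acts as an increasing diffeomorphism, to read off the first assertion from the (necessarily monotone) dynamics there, and then to rule out two non-attracted points by the minimum principle for the derivative of that iterate. First I would reduce to the case $I\cap f^n(I)\neq\emptyset$ for some $n\ge 1$: since $I$ is not wandering there are integers $0\le a<b$ with $f^a(I)\cap f^b(I)\neq\emptyset$, and, replacing $I$ by the homterval $f^a(I)$ and putting $n=b-a$, it suffices to prove both assertions for this new interval — legitimate since $\BB_0(f)$ and $\co_f^-(\per(f))$ are backward invariant along orbits disjoint from $\co_f^-(\cc_f)$ (for such a point $x$ one has $\omega_f(x)=\omega_f(f(x))$) and $f^a$ is injective on the original homterval. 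Next I would set $\hat I:=\bigcup_{k\ge 0}f^{kn}(I)$: a short induction shows each $f^{kn}(I)$ is disjoint from $\co_f^-(\cc_f)$, and from $f^{(k+1)n}(I)\cap f^{(k+2)n}(I)\supseteq f^n\big(f^{kn}(I)\cap f^{(k+1)n}(I)\big)$ one gets that consecutive intervals overlap, so $\hat I$ is an open interval disjoint from $\co_f^-(\cc_f)$ — hence a homterval — with $f^n(\hat I)\subseteq\hat I$ and $I\subseteq\hat I$. Finally let $g:=f^n|_{\hat I}$ if this is increasing and $g:=f^{2n}|_{\hat I}$ otherwise; in either case $g$ is an increasing diffeomorphism of the homterval $\hat I$ into itself, with $g=f^m|_{\hat I}$ for some $m\in\{n,2n\}$.

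For the first assertion, fix $x\in\hat I$. Since $g$ is increasing with $g(\hat I)\subseteq\hat I$, the orbit $\{g^k(x)\}_k$ is monotone, hence converges to some $\ell\in\overline{\hat I}$. If $\ell$ lies in the open interval $\hat I$, then $\ell$ is a fixed point of $g$, so $\ell\in\per(f)$; either $x=\ell\in\per(f)$, or $x\neq\ell$, in which case no fixed point of $g$ lies strictly between $x$ and $\ell$, so $g-\mathrm{id}$ keeps its sign on the interval joining them and a whole one-sided neighbourhood of $\ell$ is attracted under $f$ to the finite orbit $\co_f^+(\ell)$, which is therefore a metrical attractor, whence $x\in\BB_0(f)$. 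If instead $\ell$ is an endpoint of $\hat I$, say $\ell=\sup\hat I$ with $g^k(x)\uparrow\ell$, then letting $k\to\infty$ in $g^{k+1}(x)=g(g^k(x))$ shows $\lim_{y\uparrow\ell}f^m(y)=\ell$, i.e.\ $\ell$ is left-periodic for $f$; one checks $g$ has no fixed point in $(x,\ell)$, so $g-\mathrm{id}>0$ there and all of $(x,\ell)$ is attracted to the finite lateral orbit of $\ell$, so again $x\in\BB_0(f)$. In every case $x\in\BB_0(f)\cup\per(f)$, so $I\subseteq\hat I\subseteq\BB_0(f)\cup\co_f^-(\per(f))$.

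For the remaining statement, assume $f$ is $C^3$ with $Sf<0$. By the composition formula for the Schwarzian derivative, $g=f^m$ has negative Schwarzian derivative and no critical point on the homterval $\hat I$. By the previous paragraph, every point of $\hat I\setminus\BB_0(f)$ is a fixed point $\ell$ of $g$; and if $g'(\ell)\le 1$ then $Sg<0$ forces $\ell$ to be at least one-sidedly attracting (when $g'(\ell)=1$ one has $g''(\ell)=0$ and $Sg(\ell)=g'''(\ell)<0$), so $\co_f^+(\ell)$ is a metrical attractor and $\ell\in\BB_0(f)$. Hence $\hat I\setminus\BB_0(f)$ contains only repelling fixed points of $g$. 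Were there two of them, $p_1<p_2$, then $g$ would map $[p_1,p_2]$ onto itself, while the minimum principle ($Sg<0$ makes $|g'|^{-1/2}$ convex, so $|g'|$ attains its minimum over $[p_1,p_2]$ at an endpoint) would give $|g'|>1$ throughout $[p_1,p_2]$; but then $p_2-p_1=g(p_2)-g(p_1)=\int_{p_1}^{p_2}|g'|>p_2-p_1$, a contradiction. Therefore $\hat I\setminus\BB_0(f)$, and a fortiori $I\setminus\BB_0(f)$, has at most one point.

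The first assertion is essentially the classical Homterval Lemma of de Melo and van Strien \cite{MvS}; what needs care in the present setting is the presence of discontinuities. I expect the main technical point to be the endpoint case above — verifying that $\ell$ is genuinely a periodic-like point and that its lateral orbit is a bona fide (minimal, positive-measure) attractor — together with checking at each step that $\hat I$ remains a homterval, so that $f^m|_{\hat I}$ is a true diffeomorphism and the minimum principle is available. The negative-Schwarzian step for the last statement should be comparatively short.
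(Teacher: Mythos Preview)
The paper does not supply a proof of this lemma; it is stated with a reference to de Melo--van Strien \cite{MvS} and used as a black box throughout. Your argument is essentially the classical one from that source, correctly adapted to the present setting with discontinuities: build the forward-invariant homterval $\hat I=\bigcup_{k\ge0}f^{kn}(I)$, pass to an increasing iterate $g$, read off the first assertion from the monotone dynamics of $g$, and for the $Sf<0$ refinement invoke the minimum principle to exclude two repelling fixed points. The endpoint case, where the limit $\ell$ is only a periodic-like point rather than a genuine periodic point, is exactly the extra wrinkle needed here, and you handle it correctly: since $g=f^m$ is increasing on $\hat I$, the sign-tracking in the definition of $f^m(\ell-\ii)$ gives $f^m(\ell-\ii)=\ell-\ii$, and the one-sided basin $(x,\ell)$ makes the lateral orbit a finite minimal attractor.

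One small correction: in your parenthetical for the case $g'(\ell)=1$, it is not true in general that $g''(\ell)=0$. However, if $g''(\ell)\ne0$ then $\ell$ is a saddle-node, hence still one-sidedly attracting, and if $g''(\ell)=0$ then indeed $Sg(\ell)=g'''(\ell)<0$ makes $\ell$ two-sidedly attracting; so your conclusion that $g'(\ell)\le1$ forces $\ell\in\BB_0(f)$ is correct either way. Also, a notational quibble: you reuse the letters $a,b$ for both the endpoints of $I$ and the iteration indices, which is confusing but harmless.
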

%
%
%
%
%

As we are dealing with subsets of the interval,  if the $\omega$-limit of a point is not totally disconnected, then its interior is not empty. This implies, by Lemma~\ref{lematres} below, that either $\omega_f(x)$ is a totally disconnected set or a {\em cycle of intervals}.
 

The proofs of the following lemmas are standard.

\begin{Lemma}\label{lematres} If $p \in [0,1]\setminus\co_f^-(\cc_f)$,  then $\interior (\omega_f(p))\ne\emptyset$ if and only if $\omega_f(p)$ is a cycle of intervals.  Furthermore, each cycle of intervals contains at least one point of $\cc_f$ in its interior.
\end{Lemma}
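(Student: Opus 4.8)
The statement has two parts: (1) for $p \notin \co_f^-(\cc_f)$, the $\omega$-limit $\omega_f(p)$ has nonempty interior iff it is a cycle of intervals; and (2) every cycle of intervals contains a point of $\cc_f$ in its interior. Let me sketch both.

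For part (1): the "if" direction is immediate since a cycle of intervals is by definition a finite union of non-trivial closed intervals, which has nonempty interior. For the "only if" direction, suppose $\interior(\omega_f(p)) \neq \emptyset$. First observe that $\omega_f(p)$ is always compact and forward-invariant (as an $\omega$-limit set). The idea is to show it is a finite union of closed intervals that is cyclically permuted and transitive. Let $J$ be a connected component of $\interior(\omega_f(p))$, so $J$ is an open interval. I would argue that $f(\overline{J})$ is again (contained in) $\omega_f(p)$ and is an interval, hence lies in some component of $\interior(\omega_f(p))$ together with its closure; one must first check that $\overline J$ contains no point of $\cc_f$ in its interior at which $f$ is badly discontinuous — actually this is where care is needed, since $f$ has discontinuities. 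The cleaner route: pick an interior point $q \in J$ with $q$ not in $\co_f^-(\cc_f)$ (possible since that set is countable) whose forward orbit is dense in $\omega_f(p)$; actually one knows $\omega_f(q) = \omega_f(p)$ for such $q$ when $q\in\omega_f(p)$ by standard arguments. Then the forward orbit of the interval $J$ must return to meet $J$ (non-wandering, since points of $J$ are non-wandering), and by the Homterval Lemma (Lemma~\ref{LemmaHomterval}) $J$ cannot be a homterval whose iterates are disjoint unless it is wandering — but $J \subset \omega_f(p)$ forbids wandering. Hence some iterate $f^n(J)$ overlaps $J$, the orbit of $J$ consists of finitely many intervals, their union $K$ is forward-invariant up to closure, $K \subset \omega_f(p)$, and transitivity of $\omega_f(p)$ forces $\omega_f(p) = \overline{K}$, a cycle of intervals. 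The main obstacle in this part is handling the discontinuities of $\cc_f$ lying inside the candidate intervals: one must verify that the interior of $\omega_f(p)$ is genuinely invariant under the (multivalued at $\cc_f$) dynamics, which forces passing to closures and carefully tracking lateral limits, exactly the machinery set up with $p \pm \ii$ earlier in the section.

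For part (2): suppose $K = I_1 \cup \cdots \cup I_k$ is a cycle of intervals containing \emph{no} point of $\cc_f$ in its interior. Then, after relabeling, $f$ maps each $I_j$ homeomorphically onto $I_{j+1}$ (indices mod $k$) — more precisely $f^k|_{I_1}$ is a $C^3$ diffeomorphism onto its image with $Sf^k < 0$ (composition of maps with negative Schwarzian has negative Schwarzian, as each $I_j$ avoids $\cc_f$ in its interior). But a cycle of intervals is transitive, and a transitive homeomorphism (or diffeomorphism) of a finite union of closed intervals onto itself cannot exist: the endpoints would have to be permuted, the interval $I_1$ would be mapped onto itself by $f^k$ as an \emph{increasing or decreasing} homeomorphism, which has only fixed/period-two point dynamics and is never transitive on a non-trivial interval. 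This contradicts transitivity. So every cycle of intervals must contain a point of $\cc_f$ in its interior. The only subtlety here is the case where $\cc_f$ touches the \emph{endpoints} of the $I_j$'s rather than their interiors — but then the same argument applies, since "interior of $K$" only excludes the finitely many endpoints, and $f^k$ restricted to the open interval $\interior(I_1)$ is still a homeomorphism onto $\interior(I_1)$, again precluding transitivity.

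I expect part (1), specifically the invariance-of-the-interior step in the presence of discontinuities, to be the principal obstacle; part (2) is a soft topological/dynamical argument once one invokes that a monotone self-homeomorphism of an interval is never transitive. Throughout, I would lean on the Homterval Lemma to rule out wandering behaviour and on the negative Schwarzian hypothesis only insofar as it is inherited by iterates away from $\cc_f$.
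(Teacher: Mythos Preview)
Your plan is correct and follows the standard line of argument. The paper itself does not prove Lemma~\ref{lematres}; it simply remarks that ``the proofs of the following lemmas are standard'' and states the result without further detail, so there is nothing to compare against beyond confirming that your sketch is indeed the standard one.

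Two minor points worth tightening when you write it out in full. In part~(1), the sentence ``one knows $\omega_f(q)=\omega_f(p)$ for such $q$ when $q\in\omega_f(p)$ by standard arguments'' is not what you want---that equality is false for arbitrary $q\in\omega_f(p)$. What you actually need (and what you implicitly use a few lines later) is that the orbit of $p$ itself visits $J$ infinitely often, so you may take $q=f^n(p)$ for some $n$, which trivially gives $\omega_f(q)=\omega_f(p)$. In part~(2), you should make explicit why $f$ maps each $I_j$ \emph{onto} $I_{j+1}$ rather than merely into it: this follows from transitivity of the cycle, which forces $f$ restricted to the union to be surjective, hence the induced map on components is a permutation and each restriction is onto. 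With these two clarifications your argument is complete; the handling of discontinuities via lateral limits that you flag as the main obstacle is indeed the only place requiring genuine care, and the $p\pm\ii$ formalism in the paper is set up precisely for this.
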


\begin{Lemma}\label{LemmaFinitenessOfCycleOfIntervals}
Let $p,q\in[0,1]\setminus\co_f^-(\cc_f)$. If $\interior(\omega_f(p))\cap\interior(\omega_f(q))\ne\emptyset$, then $\omega_f(p)=\omega_f(q)$.
\end{Lemma}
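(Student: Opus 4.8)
The plan is to exploit the transitivity that is built into the notion of a cycle of intervals. By Lemma~\ref{lematres}, the hypothesis $\interior(\omega_f(p))\ne\emptyset$ forces $\omega_f(p)$ to be a cycle of intervals, say $K_p=I_1\cup\cdots\cup I_m$, a transitive finite union of non-trivial closed intervals; likewise $\omega_f(q)$ is a cycle of intervals $K_q$. First I would note that $K_p$ is forward invariant (as an $\omega$-limit set) and, being a cycle of intervals, $f$ acts transitively on it, so that the forward orbit of any point of $\interior(K_p)$ that avoids $\co_f^-(\cc_f)$ is dense in $K_p$; the same holds for $K_q$.

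Next, pick a point $z\in\interior(\omega_f(p))\cap\interior(\omega_f(q))$. Since $z$ is an interior point of $K_p$, every neighborhood of $z$ meets $\omega_f(p)$ in a set with nonempty interior; in particular there is a small open interval $J\ni z$ with $J\subset\interior(K_p)\cap\interior(K_q)$, shrinking if necessary. Because $z\in\omega_f(p)=K_p$ and $K_p$ is a cycle of intervals, there are arbitrarily large $n$ with $f^n(p)\in J$ (using that the orbit of $p$ accumulates on all of $K_p$, hence enters the open set $J$). Now $\omega_f(f^n(p))=\omega_f(p)=K_p$, and since $f^n(p)\in J\subset\interior(K_q)$, transitivity of $K_q$ together with Lemma~\ref{LemmaHomterval} (no wandering intervals for a point whose orbit is non-wandering, and an interior interval of a cycle of intervals cannot be wandering) yields that the orbit of $f^n(p)$ is dense in $K_q$, so $K_p=\omega_f(f^n(p))\supset K_q$. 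The symmetric argument, swapping the roles of $p$ and $q$, gives $K_q\supset K_p$, hence $\omega_f(p)=\omega_f(q)$.

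The one point that needs a little care — and the main obstacle — is justifying that the forward orbit of a point $x$ with $x\in J\subset\interior(\omega_f(x'))$ is dense in the cycle of intervals $\omega_f(x')$, i.e. promoting "the cycle of intervals is transitive" to "every orbit through its interior is dense". This is where I would invoke that $J$, being an open subinterval of a cycle of intervals, is not a wandering interval (a wandering interval cannot lie in an $\omega$-limit set), so by the Homterval Lemma its iterates are eventually trapped in the union of basins of periodic-like attractors or preimages of periodic points — which is incompatible with $\omega_f(x')$ having nonempty interior unless the orbit of $x$ spreads over all of $\omega_f(x')$; combined with the standard fact that on a cycle of intervals the map is transitive and has dense orbits on a residual set, one concludes density for the particular orbit in question. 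Once that is in hand, the double inclusion above finishes the proof.
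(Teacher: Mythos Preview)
Your overall strategy---get $K_p\subset K_q$ and $K_q\subset K_p$---is the right one, and the opening moves (invoke Lemma~\ref{lematres}, find a common open interval $J$, send an iterate $f^n(p)$ into $J\subset\interior(K_q)$) are exactly what one wants. The gap is in the next step: you try to prove that the orbit of $f^n(p)$ is \emph{dense} in $K_q$, and this is both unnecessary and not correctly justified. Transitivity of $K_q$ only guarantees the existence of \emph{some} dense orbit; there are many points in $\interior(K_q)$ whose orbit is not dense (periodic points, preimages of periodic points, points in proper invariant subcycles), so no argument via the Homterval Lemma can establish density for the particular point $f^n(p)$. Your own hesitation in the last paragraph is well founded: the ``one point that needs a little care'' is in fact a false statement as formulated.

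What you actually need is far less. The set $K_q=\omega_f(q)$ is forward invariant in the sense that $f(K_q\setminus\cc_f)\subset K_q$: if $y\in\omega_f(q)\setminus\cc_f$ then $f$ is continuous at $y$, so $f(y)=\lim_j f^{n_j+1}(q)\in\omega_f(q)$. Since $p\notin\co_f^-(\cc_f)$, the tail $\{f^{n+k}(p)\}_{k\ge0}$ never meets $\cc_f$; starting from $f^n(p)\in K_q$, forward invariance traps this entire tail inside $K_q$, and therefore $K_p=\omega_f(p)=\omega_f(f^n(p))\subset K_q$. The symmetric argument (some $f^m(q)\in J\subset\interior(K_p)$) gives $K_q\subset K_p$, and you are done. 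This is the standard argument the paper has in mind; no appeal to density of a specific orbit or to the Homterval Lemma is needed.
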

%

A cycle of intervals may not be a minimal attractor (or even an attractor in Milnor's sense). Indeed, this is the case of the so called wild attractors \cite{BKNvS}. Nevertheless, as every cycle of intervals contains a critical point in its interior, it follows from Lemma~\ref{LemmaFinitenessOfCycleOfIntervals} above that the number of cycles of intervals for $f$ is always finite.

\begin{Corollary}\label{CorollaryFinitenessOfCycleOfIntervals}
$f$ has at most $\#\cc_f$ distinct cycles of intervals.
\end{Corollary}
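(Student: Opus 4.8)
\emph{Plan.} The plan is to build an injection from the set of cycles of intervals of $f$ into the finite exceptional set $\cc_f$; the two lemmas quoted just above do essentially all the work.

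First I would record the ``bridge'' that every cycle of intervals is realized as an $\omega$-limit set of a point lying outside the pre-orbit of the exceptional set. Indeed, since $f$ is a $C^3$ local diffeomorphism off the finite set $\cc_f$, between consecutive points of $\cc_f$ the map $f$ is a diffeomorphism onto its image, so every point has only finitely many $f$-preimages; hence $\co_f^-(\cc_f)=\bigcup_{j\ge0}f^{-j}(\cc_f)$ is countable. Given a cycle of intervals $C$ — a transitive finite union of non-trivial closed intervals — the set $C\setminus\co_f^-(\cc_f)$ is thus a dense $G_\delta$ in $C$, and by transitivity the points of $C\setminus\co_f^-(\cc_f)$ whose forward orbit is dense in $C$ form a dense $G_\delta$ as well; for any such point $p$ one has $\omega_f(p)=C$. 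In particular $\interior(\omega_f(p))=\interior(C)\ne\emptyset$.

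Next comes the counting argument. By Lemma~\ref{lematres}, for each cycle of intervals $C$ I may choose a point $c(C)\in\interior(C)\cap\cc_f$. I claim $C\mapsto c(C)$ is injective. Suppose $C_1$ and $C_2$ are cycles of intervals with $c(C_1)=c(C_2)=:c$, and write $C_i=\omega_f(p_i)$ with $p_i\notin\co_f^-(\cc_f)$ as in the previous paragraph. Then $c\in\interior(\omega_f(p_1))\cap\interior(\omega_f(p_2))$, so that intersection is non-empty, and Lemma~\ref{LemmaFinitenessOfCycleOfIntervals} forces $\omega_f(p_1)=\omega_f(p_2)$, i.e.\ $C_1=C_2$. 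Hence distinct cycles of intervals get assigned distinct points of $\cc_f$, and the number of cycles of intervals is at most $\#\cc_f$.

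I do not anticipate a genuine difficulty: once the bridge is in place the argument is immediate from Lemmas~\ref{lematres} and \ref{LemmaFinitenessOfCycleOfIntervals}. The only step needing a word of care is that bridge — passing from the abstract transitivity of $C$ to the existence of a point $p\notin\co_f^-(\cc_f)$ with $\omega_f(p)=C$ — which relies on the countability of $\co_f^-(\cc_f)$ and the standard fact that a transitive system on a perfect compact set has a (residual) set of points with dense orbit and full $\omega$-limit. Everything else is bookkeeping.
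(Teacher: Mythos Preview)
Your argument is correct and follows exactly the route the paper intends: assign to each cycle of intervals a point of $\cc_f$ lying in its interior (Lemma~\ref{lematres}) and use Lemma~\ref{LemmaFinitenessOfCycleOfIntervals} to conclude that this assignment is injective. The paper in fact states the corollary without proof, relying on the one-line remark preceding it; your ``bridge'' step---showing that every cycle of intervals is actually $\omega_f(p)$ for some $p\notin\co_f^-(\cc_f)$, so that Lemma~\ref{LemmaFinitenessOfCycleOfIntervals} applies as stated---is a detail the paper leaves implicit, and your justification of it (countability of $\co_f^-(\cc_f)$, Baire category, perfectness of $C$) is sound.
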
 


%

\begin{Lemma}\label{Lemmaiuywe4jj}
Let $x\in[0,1]\setminus\co_f^-(\cc_f)$ be a point not contained in the basin of attraction of any attracting periodic-like orbit and let $p$ be a periodic-like point.
\begin{enumerate}
\item If $p$ is a left side periodic-like point, then $p\in\overline{\co_f^+(x)\cap[0,p)}$ $\iff$ $p\in\overline{\omega_f(x)\cap[0,p)}$.
\item If $p$ is a right side periodic-like point, then $p\in\overline{\co_f^+(x)\cap(p,1]}$ $\iff$ $p\in\overline{\omega_f(x)\cap(p,1]}$.
\end{enumerate}
\end{Lemma}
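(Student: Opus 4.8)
The plan is to prove (1) and note that (2) is symmetric. The forward implication ($\Leftarrow$) is trivial in both parts, since $\omega_f(x)\subset\overline{\co_f^+(x)}$ and $\omega_f(x)$ is closed, so $\overline{\omega_f(x)\cap[0,p)}\subset\overline{\co_f^+(x)\cap[0,p)}$ (here one uses that $x\notin\co_f^-(\cc_f)$, so every point of $\omega_f(x)$ is a genuine limit of the forward orbit and no spurious point is created near $p$). The real content is the reverse implication: if the forward orbit of $x$ accumulates on $p$ from the left, then so does $\omega_f(x)$. First I would fix the left-periodic point $p$, say of period $n$, so $f^n(p-\ii)=p-\ii$, and choose a small interval $J=(p-\varepsilon,p)$ on which $f^n|_J$ is defined and monotone with $f^n(J)\subset[0,p)$ and $f^n((p-\varepsilon,p))\to p$ from the left; shrinking $\varepsilon$, the map $f^n$ on $J$ is either a homeomorphism onto a subinterval with $p$ as a fixed endpoint, or has a fixed point inside, and in either case $J$ is a candidate homterval.

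The key step is a dichotomy on the interval $J$. Either $J$ (or a subinterval of it abutting $p$) is a homterval, or it meets $\co_f^-(\cc_f)$; in the latter case I shrink $\varepsilon$ so that the only possible preimage of $\cc_f$ accumulating at $p$ is $p$ itself, and then a still smaller $(p-\varepsilon',p)$ is a homterval. So I may assume $(p-\varepsilon,p)$ is a homterval $I$. Now apply the Homterval Lemma (Lemma \ref{LemmaHomterval}): either $I$ is a wandering interval, or $I\subset\BB_0(f)\cup\co_f^-(\per(f))$ with $I\setminus\BB_0$ at most a point. If the orbit of $x$ enters $I$ infinitely often approaching $p$ — which is exactly the hypothesis $p\in\overline{\co_f^+(x)\cap[0,p)}$ — then $I$ cannot be a wandering interval (the forward images of $I$ would have to accumulate on $p$, forcing $f^j(I)\cap f^k(I)\neq\emptyset$ via the nested structure coming from $f^n(p-\ii)=p-\ii$; more directly, some forward iterate of a point of the orbit lies in $I$ and then the corresponding image of $I$ overlaps $I$). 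Since $I$ is not wandering and $x\notin\co_f^-(\cc_f)$ so $x\notin\co_f^-(\per(f))$ up to the basin, the assumption that $x$ is not in the basin of any attracting periodic-like orbit forces $x\notin\BB_0(f)$; hence the nearby dynamics on $I$ is, up to one point, inside the basin $\BB_0$, and $p-\ii$ itself must be an attracting (one-sided) periodic orbit or a one-sided neutral/parabolic fixed point of $f^n|_J$.

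This is where I expect the main obstacle: ruling out the case where $p-\ii$ is a one-sided \emph{attracting} periodic-like orbit, because then $x$ would be attracted to it, contradicting the standing hypothesis that $x$ is not in the basin of any attracting periodic-like orbit; and ruling in the remaining (repelling or neutral-from-the-left) case, where the forward orbit entering $J=(p-\varepsilon,p)$ must actually leave every $(p-\delta,p)$ infinitely often — here I use $Sf<0$ and the minimal-modulus estimates so that a one-sided neutral fixed point with $Sf<0$ is repelling on that side, hence $f^n$ expands $J$ and orbit points entering $J$ near $p$ are pushed toward $p$ only if they keep re-entering, which is precisely what produces $\omega$-limit points of $x$ in $[0,p)$ accumulating at $p$. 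Carefully, the argument is: the forward orbit of $x$ hits $(p-\delta,p)$ for a sequence $\delta_k\downarrow0$, giving orbit points $y_k\to p$ with $y_k<p$; any accumulation point of $\{y_k\}$ lies in $\omega_f(x)$ and is $\le p$; since $y_k<p$ and $y_k\to p$, $p$ lies in the closure of $\omega_f(x)\cap[0,p)$, which is the desired conclusion $p\in\overline{\omega_f(x)\cap[0,p)}$. Finally, part (2) follows by applying part (1) to $f$ conjugated by $x\mapsto 1-x$ (which sends left-periodic to right-periodic points and preserves $Sf<0$, local diffeomorphism, and basins), or simply by running the same argument on the right.
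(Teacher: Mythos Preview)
The paper does not actually give a proof of this lemma; it is grouped among results whose ``proofs are standard.'' So there is no paper-proof to compare against, only the question of whether your argument is correct.

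Your reduction to the one-sided dynamics of $f^n$ near $p$ is the right move, and the attracting case is handled correctly: if $f^n$ maps a left-neighbourhood of $p$ into itself, then once the orbit of $x$ enters that neighbourhood it is trapped and converges to a periodic-like attractor, contradicting the hypothesis on $x$. The detour through the Homterval Lemma is unnecessary and somewhat tangled (whether $(p-\varepsilon,p)$ is a homterval is not the same dichotomy as whether $p-\ii$ is attracting or repelling), but it does no real harm.

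The genuine gap is in your ``Carefully, the argument is'' paragraph. You take orbit points $y_k<p$ with $y_k\to p$, note that their accumulation point $p$ lies in $\omega_f(x)$, and then assert that $p\in\overline{\omega_f(x)\cap[0,p)}$. This does not follow: the $y_k$ are orbit points, not $\omega$-limit points, so all you have shown is $p\in\omega_f(x)$, not that $\omega_f(x)$ contains points strictly below $p$ accumulating at $p$. It is perfectly consistent with what you have written that $\omega_f(x)\cap(p-\delta,p)=\emptyset$ for some $\delta>0$ while the orbit itself keeps re-entering arbitrarily close to $p$.

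What is missing is the following. In the repelling case choose $\epsilon_0>0$ with $f^n(y)<y$ for all $y\in(p-\epsilon_0,p)$. Suppose for contradiction that $\omega_f(x)\cap(p-\delta_0,p)=\emptyset$ for some $\delta_0\in(0,\epsilon_0)$; pick $\delta\in(0,\delta_0)$, so $p-\delta\notin\omega_f(x)$ as well, and then pick $\delta'\in(0,\delta)$ with $f^n((p-\delta',p))\subset(p-\delta,p)$. Since $[p-\delta,p-\delta']$ is compact and disjoint from $\omega_f(x)$, the orbit of $x$ visits it only finitely often, say never after time $N$. Now take $y_k=f^{n_k}(x)\in(p-\delta',p)$ with $n_k>N$. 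By induction, $f^{jn}(y_k)\in(p-\delta',p)$ for every $j\ge0$: indeed $f^{(j+1)n}(y_k)\in f^n((p-\delta',p))\subset(p-\delta,p)$ and has time index $>N$, hence lies in $(p-\delta,p)\setminus[p-\delta,p-\delta']=(p-\delta',p)$. But $f^{jn}(y_k)$ is strictly decreasing and bounded in $(p-\delta',p)$, so it converges to a fixed point of $f^n$ in $[p-\delta',p)\subset(p-\epsilon_0,p)$, contradicting $f^n(y)<y$ there. Hence $\omega_f(x)\cap(p-\delta_0,p)\ne\emptyset$ for every $\delta_0>0$, which is exactly $p\in\overline{\omega_f(x)\cap[0,p)}$.
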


\begin{theorem}[Koebe's Lemma \cite{MvS}] \label{KoebeLemma}
For every $\varepsilon>0$ $\exists K>0$ such that the following holds: Let $M$, $T$ be intervals in $\RR$ with $M\subset T$ and denote respectively by $L$ and $R$ the left and right components of $T\setminus M$. If $f:T\to f(T)\subset\RR$ is a $C^3$ diffeomorphism with negative Schwarzian derivative and   
$$|f(L)|\ge\varepsilon|f(M)| \text{ and } |f(R)|\ge\varepsilon|f(M)|,$$
then $\frac{|Df(x)|}{|Df(y)|}\le K$ for $x,y\in M$.
\end{theorem}

\section{Markov maps and the interval dichotomy lemma}
\label{Sectionmarkovmaps}

Let $\XX$ be a compact metric space and $\mu$ be a finite measure defined on the Borel sets of $\XX$. Let  $F:V\to\XX$ be a measurable map defined on a Borel set $V\subset\XX$ with full measure (i.e., $\mu(V)=\mu(\XX)$), to which we shall refer. Note that we are not requiring $\mu$ to be $F$-invariant. The map $F$ is called {\em ergodic} with respect to $\mu$ (or $\mu$ is called ergodic with respect to $F$) if $\frac{\mu(U)}{\mu(\XX)}=0$ or $1$ for every $F$-invariant Borel set $U$, noting that here $F$-invariant means $U=F^{-1}(U)$.

\begin{Proposition}\label{ExpMeasures}
If a measure $\mu$ is ergodic with respect to $F$ (not necessarily invariant), then there is a compact set $A\subset\XX$ such that $\omega_F(x)=A$ for $\mu$-almost every $x\in\XX$.
\end{Proposition}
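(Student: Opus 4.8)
The plan is to deduce the essential constancy of $x\mapsto\omega_F(x)$ from the ergodicity hypothesis by encoding the $\omega$-limit set through a countable family of ``visiting sets'' indexed by a topological basis of $\XX$. Since $\XX$ is a compact metric space it is second countable; fix a countable basis $\{W_n\}_{n\in\NN}$ for its topology. For each $n$ put
$$A_n:=\Big\{x\,;\,F^k(x)\in W_n\text{ for infinitely many }k\ge0\Big\}=\bigcap_{m\ge0}\bigcup_{k\ge m}F^{-k}(W_n),$$
which is a Borel set because $F$ is measurable and $W_n$ is open (here $F^{-k}(W_n)=(F^k)^{-1}(W_n)$, a Borel subset of $\XX$).

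The first key step is to check that each $A_n$ is \emph{exactly} $F$-invariant, i.e.\ $F^{-1}(A_n)=A_n$, so that the ergodicity of $\mu$ forces $\mu(A_n)\in\{0,\mu(\XX)\}$. The point is that $x\in A_n$ holds precisely when $x$ has an infinite $F$-orbit meeting $W_n$ infinitely often, and erasing or adjoining the initial term of a sequence does not affect whether it meets $W_n$ infinitely often; hence $x\in A_n$ if and only if $x\in V$ and $F(x)\in A_n$, which is the statement $x\in F^{-1}(A_n)$. This is the step most deserving of care, since it is the one place where the partial definition of $F$ must be handled, and it is why one must verify genuine invariance rather than invariance modulo $\mu$ (which would not help, as $F$ is not assumed non-singular).

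The second key step is the pointwise identity, valid for \emph{every} $x$,
$$\omega_F(x)=\XX\setminus\bigcup\{W_n\,;\,x\notin A_n\},$$
which conveniently bypasses the closure issues that obstruct the naive attempt to work directly with the sets $\{x\,;\,\omega_F(x)\cap W\ne\emptyset\}$ for $W$ open. For the inclusion ``$\subseteq$'': if $y\in\omega_F(x)$ and $y\in W_n$, then $W_n$ is a neighbourhood of an accumulation point of the orbit, so the orbit visits $W_n$ infinitely often and $x\in A_n$. For ``$\supseteq$'': if $y$ lies in no $W_n$ with $x\notin A_n$, then, since $\{W_n\}$ is a basis, every neighbourhood of $y$ contains some $W_n$ with $y\in W_n$, hence with $x\in A_n$, hence containing infinitely many orbit points, so $y\in\omega_F(x)$. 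In particular the right-hand side is closed in the compact space $\XX$, hence compact.

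To conclude, set $N_0:=\{n\,;\,\mu(A_n)=0\}$; by the dichotomy of the second step, $\mu(A_n)=\mu(\XX)$ for every $n\notin N_0$. The set
$$G:=\bigcap_{n\notin N_0}A_n\ \cap\ \bigcap_{n\in N_0}(\XX\setminus A_n)$$
is a countable intersection of sets of full measure, so $\mu(G)=\mu(\XX)$, and for $x\in G$ one has $x\notin A_n$ if and only if $n\in N_0$. Substituting this into the identity of the third step yields $\omega_F(x)=\XX\setminus\bigcup_{n\in N_0}W_n=:A$ for all $x\in G$, and $A$ is the required compact set. Apart from the exact invariance flagged above and the use of a basis to obtain the clean pointwise formula, the argument is routine.
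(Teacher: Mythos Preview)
Your proof is correct and follows the same overall strategy as the paper: exploit ergodicity on $F$-invariant sets determined by the interaction of orbits with open sets, then assemble the common $\omega$-limit set from the resulting $0$--$1$ dichotomies. The execution differs in two respects worth noting. First, the paper works with the sets $\widetilde U=\{x:\omega_F(x)\cap U\ne\emptyset\}$ for arbitrary open $U$, takes $W$ to be the \emph{maximal} open set with $\widetilde W$ null, sets $A=\XX\setminus W$, and then argues, for each $p\in A$ separately, that $p\in\omega_F(x)$ almost surely; strictly speaking this last step needs a countable dense subset of $A$ (plus closedness of $\omega_F(x)$) to pass from ``each $p$'' to ``all $p$ simultaneously'', a point the paper leaves implicit. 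Your version sidesteps this by fixing a countable basis $\{W_n\}$ at the outset and using the ``visits $W_n$ infinitely often'' sets $A_n$ (which are not quite the same as $\widetilde{W_n}$, since orbit points in $W_n$ may accumulate only on $\partial W_n$); this yields the clean pointwise formula $\omega_F(x)=\XX\setminus\bigcup\{W_n:x\notin A_n\}$ and reduces everything to a single countable intersection. Second, you are explicit about the exact invariance $F^{-1}(A_n)=A_n$ and the partial-domain issue, which the paper does not discuss. In short: same idea, but your packaging is tighter and handles the countability bookkeeping more transparently.
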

\begin{proof}

If $U\subset \XX$ is an open set, then either $\omega_F(x)\cap U\ne\emptyset$ for $\mu$-almost every $x \in\XX$ 
or $\omega_F(x)\cap U=\emptyset$ for $\mu$ almost every $x \in \XX$. 
Indeed, taking $\widetilde{U}=\{x\,;\,\omega_F(x)\cap U\ne \emptyset\}$ we have that $\widetilde{U}$ is invariant and then, by ergodicity, either $\mu(\widetilde{U})=0$ or $\mu(\widetilde{U})=\mu(\XX)$.

Notice that
if $\omega_F(x) \cap U \ne \emptyset$ for every open set $U$ and $\mu$-almost every $x$,
then $\omega_F(x)=\XX$ almost surely, proving the proposition.
Thus, we may suppose the existence of a non-empty  open $U\subset\XX$ such that $\omega_F(x)\cap U=\emptyset$ for $\mu$-almost every $x$.
Let $W$ be the maximal open set such that $\omega_F(x)\cap W=\emptyset$ for $\mu$ almost every $x\in\XX$.
Thus, $\omega_F(x)\subset A$ for $\mu$-almost every $x\in\XX$, where $A=\XX\setminus W$.

Now, we shall show that $\omega_F(x)\supset A$ for $\mu$-almost every $x \in\XX$.
For that, consider a countable and dense subset $A'$ of $A$. Given $p \in A'$ we have that necessarily $\omega_F(x)\cap B_{\varepsilon}(p) \ne \emptyset$ for $\mu$ almost every $x \in \XX$ and any $\varepsilon>0$,
for otherwise we would have $\varepsilon >0$ such that $\omega_F(x)\cap B_{\varepsilon}(p) = \emptyset$ for $\mu$-almost every $x\in\XX$,
but then $B_{\varepsilon}(p) \cup W$ would contradict the maximality of $W$.
Let $W_n=\{x\,;\,\omega_F(x)\cap B_{\frac{1}{n}}(p) \ne \emptyset\}$.
We have that $\mu(W_n)=\mu(\XX)$, $\forall\,n\in\NN$, and thus, $\mu(\bigcap_n W_n)=\mu(\XX)$. As $\omega_F(x)$ is a closed set and $dist(p,\omega_F(x))=0$ $\forall\,x \in \bigcap_n W_n$, it follows that $p \in \omega_F(x)$ for $\mu$-almost every $x\in \XX$ and every $p\in A'$. As $A'$ is countable $A'\subset\omega_F(x)$ for $\mu$-almost every $x\in\XX$. As $A'$ is dense in $A$, we get also that $A\subset\omega_F(x)$ for $\mu$-almost every $x\in\XX$, proving Proposition\,\ref{ExpMeasures}. 

\end{proof}

\begin{Lemma}\label{pagina1}Let $a < b \in \mathbb{R}$ and $V \subset (a,b)$ be an open set. Let $\mathcal{P}$ be the set of connected components of a Borel set $V$. Let $G:V\to(a,b)$ be a map satisfying:
\begin{enumerate}
\item 
$G(P)=(a,b)$ diffeomorphically, for any $P \in \mathcal{P}$;
\item $\exists V'\subset \bigcap_{j\ge 0}G^{-j}(V)$, with $\leb(V')>0$, such that
\begin{enumerate}
\item $\lim_{n\to\infty}|\mathcal{P}_n(x)|=0$, $\forall x \in V'$,\\
where $\mathcal{P}_n(x)$ is the connected component of $\bigcap^n_{j=0}G^{-j}(V)$ that contains $x$;
\item\label{BD99}$\exists K>0$ such that 
$ \big|\frac{DG^n(p)}{DG^n(q)}\big| \le K,$
 for all $n$, and $p, q \in \mathcal{P}_n(x)$, and $x \in V'$
\end{enumerate}
\end{enumerate}
Then, 
$\leb([a,b]\setminus V)=0$,  $\omega_G(x)=[a,b]$ for Lebesgue almost all $x \in [a,b]$.
\end{Lemma}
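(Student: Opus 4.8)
The plan is to show that the map $G$ is ergodic with respect to Lebesgue measure on $[a,b]$, and then invoke Proposition \ref{ExpMeasures} to conclude that $\omega_G(x)$ is a fixed compact set for a.e. $x$; the bounded-distortion hypothesis will then force that set to be all of $[a,b]$ and also give $\leb([a,b]\setminus V)=0$. First I would observe that, since each $P\in\cp$ maps diffeomorphically onto $(a,b)$, the inverse branches of $G^n$ are well defined on $(a,b)$, and the sets $\cp_n(x)$ form a nested sequence of intervals whose images under $G^n$ are all of $(a,b)$. Hypothesis (2a) says these shrink to points on a positive-measure set $V'$, and (2b) gives the Koebe-type distortion control $|DG^n(p)/DG^n(q)|\le K$ on each $\cp_n(x)$.

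The heart of the argument is a Lebesgue-density / bounded-distortion argument showing that any $G$-invariant Borel set $U$ with $\leb(U)>0$ has full measure. Pick a density point $z$ of $U$ lying in $V'$ (possible once we know $\leb(U\cap V')>0$, which I would first reduce to by noting $V'$ has full measure in the relevant component structure after intersecting with the invariant set — more carefully, one shows a.e. point of positive-measure invariant $U$ behaves like a point of $V'$ via the distortion on pullbacks). Using the distortion bound (2b), for each $n$ the diffeomorphism $G^n:\cp_n(z)\to(a,b)$ distorts ratios of lengths by at most a factor $K$, so
$$\frac{\leb(U\cap\cp_n(z))}{\leb(\cp_n(z))}\ \ge\ \frac{1}{K}\,\frac{\leb(G^n(U\cap\cp_n(z)))}{\leb((a,b))}\ =\ \frac{1}{K}\,\frac{\leb(U\cap(a,b))}{\leb((a,b))},$$
using $G$-invariance $G^n(U\cap\cp_n(z))=U\cap(a,b)$ up to null sets. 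Since $|\cp_n(z)|\to0$ and $z$ is a density point of $U$, the left-hand side tends to $1$, forcing $\leb(U)=\leb((a,b))=b-a$; hence $\leb([a,b]\setminus U)=0$. Applied to $U=V'\cap\bigcap_j G^{-j}(V)$ (which is forward-invariant, and whose measure-theoretic saturation is $G$-invariant) this yields $\leb([a,b]\setminus V)=0$, and applied to a general invariant $U$ it yields ergodicity. Proposition \ref{ExpMeasures} then produces a compact $A$ with $\omega_G(x)=A$ a.e.; finally, for any nonempty open $J\subset(a,b)$ the invariant set $\{x:\omega_G(x)\cap J\ne\emptyset\}$ contains all points whose orbit enters $J$, and since every $\cp_n(z)$ maps onto $(a,b)\supset J$ this set has positive measure, hence full measure, so $A\cap J\ne\emptyset$ for every such $J$ and therefore $A=[a,b]$.

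The main obstacle is the bookkeeping needed to justify the density-point computation rigorously: one must verify that a positive-measure invariant set $U$ actually has positive-measure intersection with $V'$ and with the intervals $\cp_n(z)$ for suitable density points $z$, and that the invariance identity $G^n(U\cap\cp_n(z))=U\cap(a,b)$ holds up to Lebesgue-null error even though $G$ need not preserve Lebesgue measure — here the diffeomorphism property of $G^n$ on $\cp_n(z)$ is what saves us, since diffeomorphisms map null sets to null sets. I would also need to handle the possibility that the density point does not lie in $V'$; this is circumvented by noting that $V'$ has positive measure and the distortion bound lets one transport the density-point property along pullbacks, or alternatively by replacing $U$ with $U\cap V'$ and checking that its positive measure is inherited. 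Everything else is routine Koebe-style distortion estimation of the kind standard in one-dimensional dynamics, as recorded in \cite{MvS}.
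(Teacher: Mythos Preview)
Your overall architecture matches the paper's proof exactly: a density/distortion argument yielding that every positively invariant set of positive measure contained in $V'$ has full measure, hence $\leb(V')=b-a$ and $G$ is Lebesgue-ergodic; then Proposition~\ref{ExpMeasures}; then an argument that the resulting common $\omega$-limit $A$ is all of $[a,b]$. However, two of your key computations are written so that they do not conclude.

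First, the displayed distortion inequality goes the wrong way for your purpose. From the bound $|DG^n(p)/DG^n(q)|\le K$ on $\cp_n(z)$ you correctly get, for $E\subset\cp_n(z)$, the two-sided comparison $\tfrac{1}{K}\tfrac{\leb(G^n(E))}{b-a}\le \tfrac{\leb(E)}{\leb(\cp_n(z))}\le K\tfrac{\leb(G^n(E))}{b-a}$. Taking $E=U\cap\cp_n(z)$ and letting the left-hand ratio tend to $1$ only yields $1\ge \tfrac{\leb(U)}{K(b-a)}$, which is vacuous. What works (and is what the paper does) is to take $E=\cp_n(z)\setminus U$: forward invariance of $U$ gives $(a,b)\setminus U\subset G^n(\cp_n(z)\setminus U)$, so
\[
\frac{\leb((a,b)\setminus U)}{b-a}\;\le\;\frac{\leb\big(G^n(\cp_n(z)\setminus U)\big)}{b-a}\;\le\;K\,\frac{\leb(\cp_n(z)\setminus U)}{\leb(\cp_n(z))}\;\longrightarrow\;0,
\]
forcing $\leb((a,b)\setminus U)=0$. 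Note this only needs $U$ to be \emph{positively} invariant, which is what one has for $V'$ and, later, for the trapping sets.

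Second, your final step asserting $A=[a,b]$ is not correct as written: ``orbit enters $J$'' does not imply $\omega_G(x)\cap J\ne\emptyset$; you would need orbits to enter $J$ infinitely often, and the fact that each $\cp_n(z)$ surjects onto $(a,b)$ does not give this for a positive-measure set of $z$. The paper closes this differently and more cleanly: it first shows $\leb(A)>0$ by contradiction (if $\leb(A)=0$, choose an open $M_\varepsilon\supset A$ with $\leb(M_\varepsilon)<\varepsilon$; the set of points whose forward orbit is eventually trapped in $M_\varepsilon$ has full measure, so some $\Omega_\varepsilon(n_0)$ has positive measure; it is positively invariant, hence $G^{n_0}(\Omega_\varepsilon(n_0))$ has full measure, yet lies in $M_\varepsilon$). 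Then $A$ is positively invariant with positive measure, hence $\leb(A)=b-a$ by the first step, and compactness gives $A=[a,b]$. If you prefer your route, you can salvage it by arguing instead with the invariant set $\{x:G^n(x)\notin J\text{ for all }n\ge N\}$ for a gap $J\subset(a,b)\setminus A$: this is positively invariant with positive measure (for some $N$), hence full, but $G^N$ of it avoids $J$, contradicting the first step.
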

\begin{proof}
Firstly, we show that
\begin{Claim}\label{Claim75849302}
Every positively invariant set $U\subset V'$ with positive measure has measure $|b-a|$.
\end{Claim}
\begin{proof}[Proof]
Suppose that $U \subset V'$ is positively invariant with positive measure. Note that necessarily $U \subset \bigcap_{j\ge 0}G^{-j}(V) $.
From the Lebesgue Density Theorem, there is $p \in U$ (indeed for Lebesgue almost every $p \in U$) such that
$\lim_{n \to \infty} \frac{\leb(\mathcal{P}_n(p)\setminus U)}{\leb(\mathcal{P}_n(p))} =0$. So, it follows from the bounded distortion hypothesis, item (\ref{BD99}) in Lemma \ref{pagina1} and the forward invariance of $U$ that $\leb((a,b)\setminus U)=0$.
Indeed, 
\begin{equation}
\label{desig}
\leb((a,b)\setminus U)\le \leb(G^n(\mathcal{P}_n(p)\setminus U))
\end{equation}
$$
=\leb((a,b))\frac{\leb(G^n(\mathcal{P}_n(p)\setminus U))}{\leb(G^n(\mathcal{P}_n(p)))}
\le \leb((a,b)) K \frac{\leb(\mathcal{P}_n(p)\setminus U)}{\leb(\mathcal{P}_n(p))} \to 0
$$
Here the inequality (\ref{desig}) follows from the fact that $U \supset G^n(\mathcal{P}_n(p)\cap U)$ and, then, $(a,b) \setminus U \subset (a,b) \setminus G^n(\mathcal{P}_n(p)\cap U))=G^n(\mathcal{P}_n(p)\setminus U)$. And the last equality we get by writing $(a,b)=G^n(\mathcal{P}_n(p))$, and as $G^n(\mathcal{P}_n(p))$ is a bijection, $(a,b)$ can be written as a disjoint union of $ G^n(\mathcal{P}_n(p)\setminus U)$ and $ G^n(\mathcal{P}_n(p) \cap U)$.
\end{proof}
As $V'$ is a positively invariant set, we get $\leb((a,b)\setminus V') =0$. Thus, $\leb(V)=\leb(V')=\leb([a,b])$. As an invariant set is also a positively invariant set, it also follows  from the claim that $G$ is ergodic with respect to Lebesgue measure (more precisely, with respect to $\leb|_{[a,b]}$).

From Proposition~\ref{ExpMeasures}, 
there is a compact set $A \subset [a,b]$ such that $\omega_G(x)=A$ for Lebesgue almost every $x\in [a,b]$.

\begin{Claim}
\label{claimzwei}
$\leb(A)>0$.
\end{Claim}

\begin{proof}[Proof]
Suppose that $\leb(A)=0$. Then, given $\varepsilon >0$, there exists an open neighborhood $M_\varepsilon$ of $A$ such that $\leb(M_\varepsilon)<\varepsilon$. Let $\Omega_\varepsilon(n)=\{x \in (a,b);\co_G^+(G^n(x))\subset M_\varepsilon\}$.
Observe that $\leb(\bigcup_n \Omega_\varepsilon(n))=|b-a|$, as $\omega_G(x)=A$ for Lebesgue almost all $x$. Then, $\exists n_0$ such that $\leb( \Omega_\varepsilon(n_0))>0$. As $\Omega_\varepsilon(n_0)$ is positively invariant, it also follows that $G^{n_0}(\Omega_\varepsilon(n_0))$ is positively invariant.
Then, by Claim \ref{Claim75849302}, $\leb(G^{n_0}( \Omega_\varepsilon(n_0)))=|b-a|$. This is a contradiction since $G^{n_0}(\Omega_\varepsilon(n_0))\subset M_\varepsilon$ and $\leb(M_\varepsilon)<\varepsilon$.
\end{proof}

As $A$ is positively invariant and $\leb(A)>0$, by Claim \ref{claimzwei} it follows from Claim \ref{Claim75849302} that $\leb(A)=|b-a|$. As $A$ is a compact set, it follows that $A=[a,b]$. Thus $\omega_G(x)=A=[a,b]$ for Lebesgue almost every $x$, proving the Lemma.
\end{proof}

\begin{Lemma}\label{Lemma375945194558}
Let $U\subset I=(a,b)\subset\RR$ be an open set and $F:U\to (a,b)$ be a $C^3$ local diffeomorphism with $SF<0$. Let $\cp$ be the collection of connected components of $U$. If there is a positively invariant set $V\subset U$ with positive measure such that
\begin{enumerate}
\item $F(\cp(x))=I$ $\forall\,x\in V$ and
\item $V$ does not intersect the basin of attraction of any periodic-like attractor of $F$,
\end{enumerate}
then either $\omega_F(x)\subset\partial I$ for almost every $x\in V$ or $\leb(I\setminus V)=0$ and $\omega_F(x)=[a,b]$ for Lebesgue almost every $x\in[a,b]$.
\end{Lemma}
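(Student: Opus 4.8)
The plan is to reduce to Lemma \ref{pagina1} by verifying its bounded distortion and shrinking hypotheses, using the negative Schwarzian derivative. Set $G=F$, $a,b$ as given, and let $\cp$ be the connected components of $U$; by hypothesis (1), $F$ maps each $P\in\cp$ with $\cp(x)\subset V$ diffeomorphically onto $I=(a,b)$. Consider $V'=V\cap\bigcap_{j\ge0}F^{-j}(U)$, which still has positive measure since $V$ is positively invariant and $V\subset U$. For $x\in V'$ let $\cp_n(x)$ be the component of $\bigcap_{j=0}^n F^{-j}(U)$ containing $x$; because $F$ restricted to each such component is a composition of diffeomorphisms onto $I$, the branch $F^n|_{\cp_n(x)}$ extends to a diffeomorphism onto $(a,b)$. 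First I would invoke the Koebe principle / minimum principle for maps with $SF<0$: a composition of such branches, each mapping onto the full interval $(a,b)$, has an extension with definite space on each side, hence satisfies a uniform Koebe distortion bound. This gives a constant $K>0$ with $|DF^n(p)/DF^n(q)|\le K$ for all $p,q\in\cp_n(x)$, $n\ge1$, $x\in V'$, establishing item (2b) of Lemma \ref{pagina1}.

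Next I would address item (2a), namely $|\cp_n(x)|\to0$ for a.e.\ $x\in V'$. Here is where hypothesis (2) — that $V$ misses the basins of all periodic-like attractors — and the Homterval Lemma (Lemma \ref{LemmaHomterval}) enter. If $|\cp_n(x)|$ does not tend to $0$ on a positive-measure subset, then one produces a homterval $J$ (an interval on which all $F^n$ are homeomorphisms, so $J\cap\co_F^-(\cc_F)=\emptyset$) inside $\overline{\bigcap_n\cp_n(x)}$. By Lemma \ref{LemmaHomterval}, such a homterval is either a wandering interval or, up to one point, contained in $\BB_0(F)$, the union of basins of attracting periodic-like orbits. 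The latter is excluded because $V$ (hence a.e.\ point of $V'$) avoids $\BB_0(F)$. A wandering interval, if it occurs, does so on a measure-zero set of $x$ — indeed distinct components $\cp_n(x)$ are nested and the total measure of the $\liminf$ over $x$ of the components of positive length is controlled; more carefully, the set of $x$ for which $\cp_n(x)$ stabilizes to a nontrivial interval $J$ is exactly a finite or countable union of such intervals $J$, and each such $J$ being wandering forces the $F^j(J)$ to be pairwise disjoint, hence $\sum_j|F^j(J)|\le|b-a|<\infty$, so $|F^j(J)|\to0$; combined with the bounded distortion on $\cp_n(x)$ this forces $|\cp_n(x)|\to0$ anyway unless $J$ is periodic, in which case $J\subset\BB_0(F)\cup\co_F^-(\per(F))$, again excluded on $V'$ up to measure zero. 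Thus (2a) holds for a.e.\ $x\in V'$ — modulo discarding the measure-zero exceptional set, after which I redefine $V'$ accordingly.

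With (1), (2a), (2b) verified, Lemma \ref{pagina1} applies and yields $\leb([a,b]\setminus U')=0$ and $\omega_F(x)=[a,b]$ for a.e.\ $x\in[a,b]$, where $U'$ is the full-measure set on which the hypotheses hold; since $U'\subset V\cup(\text{null set})$ by construction one gets $\leb(I\setminus V)=0$, which is the second alternative of the lemma. The first alternative, $\omega_F(x)\subset\partial I$ for a.e.\ $x\in V$, is what must hold when the reduction to Lemma \ref{pagina1} fails at the only remaining place, namely when the branch structure degenerates — i.e.\ when for a positive-measure set of $x\in V$ the forward orbit under $F$ eventually leaves $U$ and cannot return except through $\partial I$. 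I expect the main obstacle to be exactly this dichotomy bookkeeping: making precise that the only way a point of $V$ can fail to have a well-defined full-branch itinerary (so that Lemma \ref{pagina1} does not see it) is for its $\omega$-limit to be pushed into $\partial I=\{a,b\}$, using positive invariance of $V$ and hypothesis (1) to rule out intermediate behaviour. The distortion estimate itself is standard Schwarzian technology; the care lies in the measure-theoretic handling of the wandering-interval and boundary cases.
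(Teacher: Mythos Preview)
There is a genuine gap in your distortion step. You assert that because each branch $F|_{\cp(x)}$ maps diffeomorphically onto the \emph{full} interval $(a,b)$, Koebe gives a uniform bound on $|DF^n(p)/DF^n(q)|$ for $p,q\in\cp_n(x)$. But Koebe's lemma requires the branch to extend diffeomorphically to a larger interval whose image contains $(a,b)$ with definite space on each side; here $\cp(x)$ is by definition a connected component of $U$, so there is no extension, and the distortion of $F^n$ on $\cp_n(x)$ can in fact blow up as $F^n(\cp_n(x))$ fills all of $(a,b)$. This is precisely why the paper does \emph{not} apply Lemma~\ref{pagina1} directly to $F$ on $(a,b)$: it first passes to a compactly contained subinterval $J=(a',b')\subset(a,b)$, so that the components $J_n(x)$ of $F^{-n}(J)$ sit inside $\cp_n(x)$ with Koebe space $\min\big(\frac{a'-a}{b-a},\frac{b-b'}{b-a}\big)$, and only then is bounded distortion available. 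The paper further needs to manufacture a nice subinterval $T\subset J$ bounded by periodic points (so that the first return map $F_T$ has full branches) before Lemma~\ref{pagina1} can be invoked; at the end one lets $a'\downarrow a$, $b'\uparrow b$.

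Your treatment of the dichotomy is also off. You suggest the alternative $\omega_F(x)\subset\partial I$ arises when ``the forward orbit under $F$ eventually leaves $U$''; but $V$ is positively invariant and contained in $U$, so orbits of points in $V$ never leave $U$. The correct split is simply: either $\omega_F(x)\subset\partial I$ for a.e.\ $x\in V$, or there is a positive-measure subset of $V$ whose $\omega$-limit meets the open interval $(a,b)$, hence meets some compact $(a',b')$ --- and it is this set that feeds into the argument above. Finally, your shrinking argument via the Homterval Lemma and wandering intervals is more circuitous than needed; once bounded distortion on $J_n(x)$ is in hand, failure of $|J_n(x)|\to0$ at a density point of $V'$ yields an interval $M$ with $F^n(M)\to J$, hence $F^\ell(M)\cap M\ne\emptyset$ for some $\ell$, producing an attracting periodic-like orbit and contradicting hypothesis~(2) directly.
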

\begin{proof}
We may suppose that $\leb(\{x\in V\,;\,\omega_f(x)\cap I\ne\emptyset\})>0$ and choose $a<a'<b'<b$ such that $\leb(V')>0$, where $V'=\{x\in V\,;$ $\omega_F(x)\cap (a',b')\ne\emptyset\}$.
Write $J=(a',b')$ (we will consider $J\subset I$ instead of $I$, so that we can apply Koebe's Lemma, Theorem~\ref{KoebeLemma}).

It follows from Koebe's Lemma that there is $K$, depending only on $\frac{a'-a}{b-a}$ and $\frac{b-b'}{b-a}$, such that 
\begin{equation}\label{claim0987}
\bigg| \frac{(F^n)'(p)}{(F^n)'(q)}\bigg|\le K, \forall p, q\in J_n(x),\forall n,\forall x \in F^{-n}(J),
\end{equation}
where $J_n(x)$ is the connected component of $F^{-n}(J)$ that contains $x$.

Given $x\in I$ let $$\cu(x)=\{n\in\NN\,;\,x\in F^{-n}(J)\}.$$
\begin{claim}
We have $\lim_{\cu(x)\ni n\to\infty}|J_n(x)|=0$, for Lebesgue almost every $x\in V'$.  
\end{claim}
\begin{proof}[Proof of the Claim]
Suppose that there is a Lebesgue density point $x$ of $V'$ such that  $$\lim_{\cu(x)\ni n\to \infty} |J_n(x)|>0.$$ Then, $M:=\interior(\bigcap_{n\in\cu(x)}J_n(x))$ is an open interval with  $J_1(x)\supset\cdots\supset J_n(x)\searrow M.$
As $F^n(J_n(x))=J$ $\forall n$, it follows from the bounded distortion (\ref{claim0987}) above that $F^n(M)\stackrel{n\to\infty}{\longrightarrow}J$.
Thus, there exists $\ell$ big enough so that 
$F^\ell(M)\cap M\ne\emptyset$.
As $F^{n\ell}|_M$ is a diffeomorphism $\forall n$, defining $\widetilde{M} = \bigcup_{n\ge 0} F^{n\ell}(M)$, we have that $F^\ell|_{\widetilde{M}}$ is a diffeomorphism and $F^\ell(\widetilde{M})=\widetilde{M}$. This implies, since $SF<0$, that $F^\ell|_{\widetilde{M}}$ has one, say $p_1$, or at most two attracting fixed-like points $p_1,p_2$ such that $\leb(\widetilde{M} \setminus \beta(A))=0$, where $A=\{p_1\}$ or $\{p_1,p_2\}$. Therefore, Lebesgue almost every point of the neighborhood of $x$ is contained in the basin of attraction of some attracting periodic-like orbit, contradicting the fact that $V'$ does not intersect the basin of any periodic-like attractor (recall that $x$ is a density point of $V'$).
\end{proof}

It follows from the claim above that $\overline{\per(F)}\supset V'\cap(a',b')$. Furthermore, from the claim, the invariance of $V'$ and the bounded distortion (\ref{claim0987}), we  also get that $\leb((a',b')\setminus V')=0$. As a consequence, $\overline{\per(F)}\supset(a',b')$.

Let $\alpha_0,\beta_0\in(a',b')\cap\per(f)$ be such that $\alpha_0<\beta_0$, $\leb(\{x\in V'\,;\,\omega_F(x)\cap\big(\co_f^+(\alpha_0)\cup\co_f^+(\beta_0)\big)\ne\emptyset\})=0$ and $\leb(\{x\in V'\;,\,\omega_F(x)\cap(\alpha_0,\beta_0)\ne\emptyset\})>0$.

%
%
%
%

Let $T=(\alpha,\beta)$ be a connected component of $(\alpha_0,\beta_0)\setminus\big(\co_F^+(\alpha_0)\cup\co_F^+(\beta_0)\big)$ such that $\leb(\{x\in V'\,;\,\omega_F(x)\cap T\ne\emptyset\})>0$. Let $T^*=\{x\in T\,;\,\co_F^+(F(x))\cap T\ne\emptyset\}$, $\mathcal{T}$ be the set of all connected components of $T^*$ and $F_T:T^*\to T$ be the first return map to $T$. As $\co_F^+(\partial T)\cap T=\emptyset$, we have that $F_T(W)=T$ for all $W\in\mathcal{T}$. It follows again from (\ref{claim0987}) that
$$
\bigg| \frac{(F_T^n)'(p)}{(F_T^n)'(q)}\bigg|\le K, \forall p, q\in \mathcal{T}_n(x),\forall n,\forall x \in V'',
$$
where $V''=\{x\in T\cap V'\,;\,\omega_F(x)\cap T\ne\emptyset\}$ and $\mathcal{T}_n(x)$ is the connected component of $F_T^{-n}(T)$ containing $x$. That is, $F_T$ satisfies the hypothesis of Lemma~\ref{pagina1}. Thus, $\leb(T\setminus V')=\leb(T\setminus V'')=0$ and $\omega_{F_T}(x)\supset[\alpha,\beta]$ for Lebesgue almost every $x \in T$. Chose any $p\in V'\cap T$ and $n\ge1$ big enough so that $J_n(p)\subset T$. As $F^n(J_n(p))=(a',b')$ and $\leb\circ F^{-1}\ll\leb$, we have that $\leb((a',b')\setminus V)=\leb((a',b')\setminus V')=0$ and $\omega_F(x)\supset[a',b']$ for almost all $x\in (a',b')$. 
Finally, as we can take $a'$ as close to $a$ and $b'$ as close to $b$ as wished, we conclude that $\leb(I\setminus V)=0$ and $\omega_F(x)=[a,b]$ for Lebesgue almost every $x \in [a,b]$.

\end{proof}

In the remainder of this section, $\cc_f\subset(0,1)$ is a finite set and $f:[0,1]\setminus\cc_f\to[0,1]$ is a $C^3$ local diffeomorphism, with $Sf<0$ and $f(\{0,1\})\subset\{0,1\}$.

Denote by $\cv_f$ the set of ``lateral exceptional values'' of $f$, i.e., $\cv_f=\{f(c_\pm)\,;\,c\in\cc_f\}$, and let $\co_f^+(\cv_f)=\{f^n(c_\pm)\,;\,c\in\cc_f$ and $n\ge1\}$.

\begin{Lemma}[Interval Dichotomy]\label{tudoounadadershchw} 
Let $I=(a,b)\subset [0,1]$ be a nice interval such that $I\cap\co_f^+(\cv_f) = \emptyset$.
If $\leb(I\setminus \BB_0(f))>0$, then
either  $\omega_f(x)\cap I=\emptyset$ for almost every $x\in I\setminus\BB_0(f)$
or  $\omega_f(x)\supset I$ for almost every $x\in I$.
\end{Lemma}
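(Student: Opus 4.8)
The plan is to reduce the Interval Dichotomy Lemma to the ergodicity analysis of a complete Markov map, namely Lemma~\ref{Lemma375945194558}, by constructing an appropriate induced (first-return-type) map on a suitable subinterval. First I would dispose of the trivial case: if $\leb(\{x\in I\setminus\BB_0(f)\,;\,\omega_f(x)\cap I\ne\emptyset\})=0$ we are immediately in the first alternative, so assume this set has positive measure. Then I would shrink $I$ slightly: choose $a<a'<b'<b$ so that $V':=\{x\in I\setminus\BB_0(f)\,;\,\omega_f(x)\cap(a',b')\ne\emptyset\}$ has positive Lebesgue measure. The purpose of passing to the slightly smaller interval $J=(a',b')\Subset I$ is, as in the proof of Lemma~\ref{Lemma375945194558}, to have definite room inside $I$ so that Koebe's Lemma yields a uniform distortion bound for all branches of $F^{-n}$ landing in $J$; this is where the hypothesis $I\cap\RE(\co_f^+(\cv_f))=\emptyset$ is essential, since it guarantees that no forward image of a lateral exceptional value falls in $I$, so that each pullback component $J_n(x)$ of $F^{-n}(J)$ is mapped diffeomorphically onto $J$ by $F^n$ with full branches (no branch of the Markov structure is truncated by a discontinuity or critical point inside $I$).

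Next I would identify the map to which Lemma~\ref{Lemma375945194558} applies. Let $U$ be the union of those connected components $P$ of $\bigl([0,1]\setminus\co_f^-(\cc_f)\bigr)$ for which $f^{n}$ carries the relevant pullback piece of $J$ onto $J$ — more precisely, following the scheme in the proof of Lemma~\ref{Lemma375945194558}, I would pass to periodic-like points $\alpha_0<\beta_0$ inside $(a',b')$ whose orbits are negligible for $V'$ but with $\leb(\{x\in V'\,;\,\omega_f(x)\cap(\alpha_0,\beta_0)\ne\emptyset\})>0$ (their existence follows because, by the distortion estimate and the Lebesgue Density Theorem argument already used, $\overline{\per(f)}\supset(a',b')$), pick a component $T=(\alpha,\beta)$ of $(\alpha_0,\beta_0)\setminus(\co_f^+(\alpha_0)\cup\co_f^+(\beta_0))$ with $\leb(\{x\in V'\,;\,\omega_f(x)\cap T\ne\emptyset\})>0$, and take $F=f_T$, the first-return map to $T$. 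Because $\co_f^+(\partial T)\cap T=\emptyset$, every branch of $f_T$ is onto $T$, so $F:U\to T$ is a complete Markov $C^3$ map with $SF<0$ (compositions of maps with negative Schwarzian derivative, away from $\cc_f$, still have negative Schwarzian derivative, by the standard chain rule for the Schwarzian), and $V'':=\{x\in T\cap V'\,;\,\omega_f(x)\cap T\ne\emptyset\}$ is a positively invariant set of positive measure for $F$ not meeting the basin of any periodic-like attractor of $F$. Lemma~\ref{Lemma375945194558} then forces one of two conclusions for $F$: either $\omega_F(x)\subset\partial T$ for a.e.\ $x\in V''$, or $\leb(T\setminus V'')=0$ and $\omega_F(x)=[\alpha,\beta]$ for a.e.\ $x\in T$.

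The first alternative is incompatible with the defining property of $T$ (we chose $T$ so that a positive-measure set of points in $V'$ has $\omega$-limit meeting the open interval $T$, hence not contained in its two-point boundary), so the second alternative holds. From there I would transfer the conclusion back to $I$: choosing $p\in V'\cap T$ and $n$ large enough that the full pullback $J_n(p)\subset T$, the relation $f^{n}(J_n(p))=(a',b')$ together with $\leb\circ f^{-1}\ll\leb$ gives $\leb((a',b')\setminus V')=0$ and $\omega_f(x)\supset[a',b']$ for a.e.\ $x\in(a',b')$; since a full-measure subset of $I$ eventually enters $(a',b')$ (again using that $I$ avoids $\RE(\co_f^+(\cv_f))$ so that $f$-iterates map pieces of $I$ fully across $I$) and $\leb\circ f^{-1}\ll\leb$, this propagates to $\leb(I\setminus V')=0$ and $\omega_f(x)\supset I$ for a.e.\ $x\in I$. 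Letting $a'\downarrow a$ and $b'\uparrow b$ yields the full interval $I$. The main obstacle I anticipate is the bookkeeping needed to set up the complete Markov structure correctly: one must verify that all branches are genuinely onto (no truncation by $\cc_f$, no escape of boundary orbits into the interval), that the uniform Koebe distortion constant is independent of the branch and the depth $n$, and that positive invariance and disjointness from periodic-like basins survive the passage to the first-return map $f_T$ — essentially a careful reprise, in this slightly more general setting, of the construction already carried out in the proof of Lemma~\ref{Lemma375945194558}.
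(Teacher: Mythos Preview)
Your overall strategy---reduce to Lemma~\ref{Lemma375945194558}---is correct, but you are working much harder than necessary and in doing so you recreate the internal machinery of that lemma instead of invoking it. The paper's proof is a two-line application: take $F$ to be the first return map to $I$ itself, observe that every branch of $F$ is already onto $I$ (this is exactly where the hypothesis $I\cap\RE(\co_f^+(\cv_f))=\emptyset$ is used: if a branch $P=(p_0,p_1)$ failed to map onto $I$, some $f^n(p_i)$ would hit $\cc_f$ before the return time, forcing a lateral exceptional value into $I$), and then feed $F$ and the positively invariant set $V=\{x\in I\setminus\BB_0:\#(\co_f^+(x)\cap I)=\infty\}$ directly into Lemma~\ref{Lemma375945194558}. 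That lemma already contains the shrinking to $(a',b')$, the Koebe distortion, the density-of-periodic-points step, the passage to a nice subinterval $T$, and the final exhaustion $a'\downarrow a$, $b'\uparrow b$. By reproducing all of this outside the lemma and then still invoking the lemma on $f_T$, you are essentially proving Lemma~\ref{Lemma375945194558} twice.

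This redundancy also introduces a loose end you would otherwise not face: your assertion that ``a full-measure subset of $I$ eventually enters $(a',b')$'' is not justified in your write-up and does not follow merely from $I\cap\RE(\co_f^+(\cv_f))=\emptyset$; it requires precisely the full-branch structure of the first return map to $I$, which you never set up. In the paper's route this step is unnecessary because Lemma~\ref{Lemma375945194558} is applied on $I$ from the start and the exhaustion in $a',b'$ happens inside that lemma. So: keep your first paragraph (dispose of the trivial alternative), then define the first return map to $I$, prove the one-line claim that its branches are full, and invoke Lemma~\ref{Lemma375945194558}. Drop the detour through $J$, $\alpha_0,\beta_0$, and $T$.
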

\begin{proof}

Let $F:I^*\to I$ be the first return map to $I$, with  $I^*=\{x\in I\,;\,\co_f^+(f(x))\cap I\ne\emptyset\}$. Let $\cp$ be the set of connected components of $I^*$.

\begin{claim}
$F$ is a local diffeomorphism having negative Schwarzian derivative and $F(P)= I$ for $\forall P\in\cp$.
\end{claim}
\begin{proof}[Proof of the Claim] As $f$ is a local diffeomorphism with $Sf<0$, it follows that $F$ is also a local diffeomorphism with $SF<0$.
Given $P\in\cp$, 
there is some $m>0$ such that  $F|_P=f^m|_P$. Write $P=(p_0,p_1)$. As $I$ is a nice interval, if $F(P)\ne I$, then $\exists i\in\{0,1\}$, $0\le n<m$ and $c\in\cc_f$ such that $f^n(p_i)=c$ and $f^{m-n}(c_-)$ or $f^{m-n}(c_+)\in I$, contradicting our hypothesis.
\end{proof}

Now let $V=\{x\in I\setminus \BB_0; \#(\co_f^+(x)\cap I)=\infty\}$, i.e., $V=\big(\bigcap_{n\ge0}F^{-n}(I)\big)\setminus\BB_0$, and assume that $\leb(V)>0$. 
Note that $V$ is an $F$-positively invariant set with positive measure and it does not intersect the basin of attraction of any periodic-like attractor of $F$. Thus, the first return map $F$ satisfy all the hypotheses of Lemma~\ref{Lemma375945194558}. As a consequence, $\leb(I\setminus V)=0$ and $\omega_f(x)\supset\omega_F(x)=\overline{I}\supset I$ for almost every $x\in I$.

\end{proof}

Let $\BB_1(f)=\{x\in[0,1]\,;\,\interior(\omega_f(x))\ne\emptyset\}$. By Lemma~\ref{lematres}, $\BB_1(f)$ is the set of points $x\in[0,1]$ such that $\omega_f(x)$ is a cycle of intervals. In particular, $\BB_1(f)$ is contained in the union of the basins of attraction of all cycles of intervals.

\begin{Corollary}\label{Cor09876111}$\omega_f(x)\subset\overline{\co_f^+(\cv_f)}$ for almost all $x\in[0,1]\setminus(\BB_0(f)\cup\BB_1(f))$.
\end{Corollary}
\begin{proof}
As the collection $\cp$ of all connected components of $(0,1)\setminus\overline{\co_f^+(\cv_f)}$ is a countable set of intervals. Furthermore, each $I\in\cp$ is a nice interval. Thus, it follows from the interval dichotomy lemma that $$0\le\leb\bigg(\bigg\{x\in[0,1]\setminus(\BB_0(f)\cup\BB_1(f))\,;\,\omega_f(x)\not\subset\overline{\co_f^+(\cv_f)}\bigg)\le$$
$$\le\sum_{I\in\cp}\leb\big(\big\{x\in I\setminus(\BB_0(f)\cup\BB_1(f))\,;\,\omega_f(x)\cap I\ne\emptyset\big\}\big)=0.$$
\end{proof}

A $C^3$ map $f:[0,1]\to[0,1]$, with $f(0)=f(1)=0$, is called {\em $S$-unimodal} if it has at most two fixed points, a single critical point $c\in(0,1)$ and negative Schwarzian derivative.
Blokh and Lyubich \cite{BL91} proved that non-flat $S$-unimodal maps display a unique metrical attractor. Using induced maps on Hofbauer-Keller towers, Keller \cite{Ke} obtained the same fact without the hypothesis of non-flatness of the map. These results by Blokh-Lyubich and Keller can also be obtained from Lemma~\ref{tudoounadadershchw} and Corollary~\ref{Cor09876111}, as shown in the Appendix.

\begin{Theorem}\label{sunimodal}
If $f:[0,1]\to[0,1]$ is a $S$-unimodal map with critical point $c\in(0,1)$, then there is an attractor $A\subset[0,1]$ such that $\omega_f(x)=A$ for almost every $x\in[0,1]$. In particular, $\leb(\beta_f(A))=1$.
The attractor $A$ is either a periodic orbit or a cycle of intervals or a Cantor set and, in this last case, $A=\overline{\co_f^+(c)}$.
\end{Theorem}

\color{black}

\section{Proof of the finiteness of the number of attractors}\label{SecMainProof}


\begin{Lemma}\label{LemmaCritnoPoco}
Let $g:[0,1]\setminus\cc_g\to[0,1]$ be a $C^3$ local diffeomorphism with $Sg<0$ and $\cc_g\subset(0,1)$ being a finite set. If $\exists\,I=(c_0,c_1)$ and $p\in \overline{I}$ such that $c_0,c_1\in\cc_g$, $I\cap\cc_g=\emptyset$, and $g|_{I}$ is a contraction with attracting fixed-like point $p$, then 
$$\omega_g(x)\subset\bigcup_{c\in\cc_g\setminus\cc_g^{^\pm}(I)}\overline{\co_g^+(c_\pm)}:=\bigcup_{c\in\cc_g\setminus\cc_g^{^-}(I)}\overline{\co_g^+(c_-)}\;\;\cup \bigcup_{c\in\cc_g\setminus\cc_g^{^+}(I)}\overline{\co_g^+(c_+)},$$ 

for almost all $x\in[0,1]\setminus(\BB_0(g)\cup\BB_1(g))$, where $\cc_g^\pm(I) = \{ c \in \cc_g ; \co_g^+(c_\pm)\cap I \ne \emptyset\} $.

\end{Lemma}
\begin{figure}
\begin{center}\includegraphics[scale=.25]{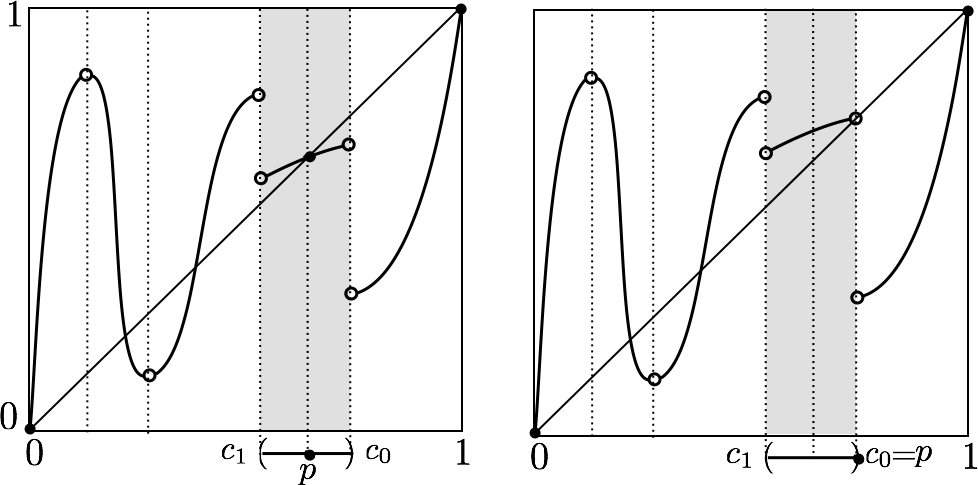}\\
\caption{\small Two examples of  maps as in Lemma~\ref{LemmaCritnoPoco}. For the map on the left $p$ is a fixed point and for the one on the right $p$ is a fixed-like point}\label{MapPoco.pdf}
\end{center}
\end{figure}
\begin{proof}
Given $y\in[0,1]$, let $V(y)=\{x\in[0,1]\setminus(\BB_0(g)\cup\BB_1(g))\,;\,y\in\omega_f(x)\}$.

\begin{claim}
If $a\in\cc_g^{\pm}(I)$, then $\leb(V(f^j(a_\pm)))=0$, $\forall\,j\ge0$ s.t. $g^j(a_\pm)\notin\bigcup_{c\in\cc_g\setminus\cc_g^{^\pm}(I)}\overline{\co_g^+(c_\pm)}$.\end{claim}
\begin{proof}[Proof of the Claim]
We may assume that $a\in\cc_g^{^-}(I)$, the case $a\in\cc_g^+(I)$ being  analogous. Let $n=\min\{j\ge0\,;\,g^j(a_-)\in I\}$ and let  $\varepsilon>0$ be so that $g^n|_{(a-\varepsilon,a)}$ is a diffeomorphism and $g^n((a-\varepsilon,a))\subset I$.

Suppose that $\exists0\le\ell<n$ such that $\leb(V(g^\ell(a_-)))>0$ and $g^\ell(a_-)$ $\notin$ $\bigcup_{c\in\cc_g\setminus\cc_g^{^\pm}(I)}\overline{\co_g^+(c_\pm)}$.

As $\omega_g(c_{\pm})=p$, $\forall\,c\in\cc_g^\pm(I)$, $b:=f^{\ell}(a_-)$ is an isolated point of $\overline{\co_g^+(\cv_g)}$.
Let $(q_0,b)$ and $(b,q_1)$ be the connected components of $[0,1]\setminus\overline{\co_g^+(\cv_g)}$ containing $b$ in its boundary.
We may assume that $g^{\ell}((a-\varepsilon,a))=(g^{\ell}(a-\varepsilon),b)$, i.e., $g^{\ell}|_{(a-\varepsilon,a)}$ preserves orientation (the case of  orientation reversing is analogous). In this case, $V(b)\cap(g^\ell(a-\varepsilon),b)=\emptyset$ and so, the forward orbit of a point of $V(b)$ accumulates on $b$ only by the right side.

For each $x\in V(b)\cap(b,q_1)$, let $n_x=\min\{i\ge1\,;\,g^i(x)\in(b,q_1)\}$ and let $I_x$ be the maximal open interval satisfying the following  three conditions: (1) $x\in I_x\subset(b,q_1)$, (2) $g^{n_x}|_{I_x}$ is a diffeomorphism and (3) $g^{n_x}(I_x)\subset(q_0,q_1)$.

Notice that  $g^{n_x}(I_x)$ is equal either to $(q_0,q_1)$ or to $(b,q_1)$. Suppose for instance that $g^{n_x}(I_x)=(b,q_1)$ and write $I_x=(\alpha,\beta)$.
So, $\exists\,0\le i<n_x$ and $c\in\cc_f$ such that $g^i(\alpha)=c$ and $g^{n_x}(\alpha_+)=b$ or $g^i(\beta)=c$ and $g^{n_x}(\beta_-)=b$.
As $b\notin\bigcup_{c\in\cc_g\setminus\cc_g^{^\pm}(I)}\overline{\co_g^+(c_\pm)}$, we get that
\begin{enumerate}
\item	either $g^i((\alpha,\beta))=(c,g^i(\beta))$, $c\in\cc_g^+(I)$ and $g^{n_x}((\alpha,\beta))=(g^{n_x-i}(c_+),q_1)$  $=$ $(b,q_1)$
\item  or $g^i((\alpha,\beta))=(g^i(\beta),c)$, $c\in\cc_g^{^-}(I)$ and $g^{n_x}((\alpha,\beta))=(g^{n_x-i}(c_+),q_1)$ $=$ $(b,q_1)$.
\end{enumerate}
 In any case there exists $\delta>0$ such that $(b,b+\delta)$ belongs to $\beta_g(p)$, the basin of attraction of the fixed point $p$. This is a contradiction, as it implies that $V(b)\subset\beta_g(p).$ Therefore, we have that $g^{n_x}(I_x)=(q_0,q_1)$ for every $x\in V(b)\cap(b,q_1)$.

Let $(q_0,q_1)^*=\bigcup_{x\in V(b)}I_x$.
As $I_x\cap I_y\implies I_x=I_y$, $\forall\,x,y\in V(b)$, define $R:(q_0,q_1)^*\to\NN$ by $R|_{I_x}\equiv n_x$ $\forall\,x\in V(b)$.
Let $G:(q_0,q_1)^*\to(q_0,q_1)$ be the induced map given by $F(x)=g^{R(x)}(x)$.
As $b\in\omega_F(x)$ for every $x\in V(b)$, it follows from Lemma~\ref{Lemma375945194558} that $\omega_g(x)\supset\omega_F(x)=[q_0,q_1]$ for almost every $x\in[q_0,q_1]$ which is a contradiction, proving the claim. 
\end{proof}

It follows from the claim that $$\bigg\{q\in \bigcup_{c\in\cc_g^\pm(I)}\overline{\co_g^+(c_\pm)}\,;\,Leb(V(q))>0\bigg\}\subset\bigcup_{c\in\cc_g\setminus\cc_g^{^\pm}(I)}\overline{\co_g^+(c_\pm)}.$$
Thus, using Corollary~\ref{Cor09876111}, we get that 
$$\omega_g(x)\subset\overline{\co_f^+(\cv_f)}\,\,\cap \bigcup_{c\in\cc_g\setminus\cc_g^{^\pm}(I)}\overline{\co_g^+(c_\pm)}\subset\bigcup_{c\in\cc_g\setminus\cc_g^{^\pm}(I)}\overline{\co_g^+(c_\pm)},$$
for almost every $x\in\BB_0(g)\cup\BB_1(g)$.
\end{proof}

We now begin to turn our attention to the parallax argument, as we have
mentioned in the introduction. 

Given a $C^3$ local diffeomorphism $f:[0,1]\setminus\cc_f\to[0,1]$, with $Sf<0$, let us write
$$(\cc_f)^-\cap\BB_0(f):=\{c\in\cc_f\,;\,(c-\varepsilon,c)\subset\BB_0(f)\text{ for some }\varepsilon>0\}$$
and
$$(\cc_f)^+\cap\BB_0(f):=\{c\in\cc_f\,;\,(c,c+\varepsilon)\subset\BB_0(f)\text{ for some }\varepsilon>0\}.$$
Also write $$(\cc_f)^\pm\setminus\BB_0(f):=\cc_f\setminus((\cc_f)^\pm\cap\BB_0(f)).$$
\begin{Corollary}\label{CorollaryCritnoPoco}
	If $f:[0,1]\setminus\cc_f\to[0,1]$ is a $C^3$ local diffeomorphism with $Sf<0$ and $\cc_f\subset(0,1)$ being a finite set, then  
$$\omega_f(x)\subset\bigcup_{c\in(\cc_f)^\pm\setminus\BB_0(f)}\overline{\co_f^+(c_\pm)}:=\bigcup_{c\in(\cc_f)^-\setminus\BB_0(f)}\overline{\co_f^+(c_-)}\;\;\cup \bigcup_{c\in(\cc_f)^+\setminus\BB_0(f)}\overline{\co_f^+(c_+)},$$
for almost every $x\in[0,1]\setminus(\BB_0(f)\cup\BB_1(f))$.
\end{Corollary}
\begin{proof}
For each left periodic-like attractor $A$, write 
$A$ $=$ $\{p,f(p_-),$ $\cdots,$ $f^{n_A-1}(p_-)\}$ $=$ $\co_f^+(p_-)$g , where the period of $p$, $n_A\ge1$, is the smaller integer bigger than one such that $f^{n_A}(p_-)=p_-$. 
Let $I_A^-:=(p_0,p)$ be such that $f^n|_{(p_0,p)}$ is monotonous and $p_0<f^{n_A}(p_0)<p$. 
Similarly, define $I_A^+$ for each right periodic-like attractor $A$. If $A$ is a left periodic-like attractor but not a right one, then set $I_A=I_A^-$. Also, set $I_A=I_A^+$ whenever $A$ is a right periodic-like attractor but not a left one. Finally, if $A$ is both a left and right periodic-like attractor, set
$$I_A=
\begin{cases}
I_A^-\cup\{p\}\cup I_A^+ & \text{ if }A\cap\cc_f=\emptyset\\
I_A^-\cup I_A^+ & \text{ if }A\cap\cc_f\ne\emptyset\\
\end{cases}.$$

Let $\cc_{f,A}^\pm=\{c\in\cc_f\,;\,\co_f^+(c_\pm)\cap\beta_f(A)\ne\emptyset\}$ and observe that $c\in\cc_{f,A}^\pm$ $\iff$ $\exists\,\ell\ge1$ such that $f^{\ell}(c_\pm)\in V_A$, where $V_A:=I_A\cup\cdots\cup f^{n_A-1}(I_A)$.


Let $\{A_1,\cdots,A_s\}$ be the set of all periodic-like attractors $A$ such that $\cc_{f,A}^\pm\ne\emptyset$. Let $V=V_{A_1}\cup\cdots\cup V_{A_s}$ and  $g:[0,1]\setminus\cc_g\to[0,1]$ be given by
	$$g(x)=\begin{cases}
		f(x) & \text{ if }x\notin V\\
		f^{n_{A_j}}(x) & \text{ if }x\in V_{A_j}
	\end{cases},$$
	where $\cc_g=\cc_f\cup \partial V$.
	Noting that $\BB_0(f)=\BB_0(g)$, $\BB_1(f)=\BB_1(g)$, $\omega_f(x)=\omega_g(x)$ $\forall\,x\notin\BB_0(f)=\BB_0(g)$ and that $$\cc_g^{\pm}(V):=\bigcup_{j=0}^s\bigcup_{i=0}^{n_{A_j}-1}\cc_g^{\pm}(f^i(I_{A_j}))=\bigcup_{j=0}^s\bigcup_{i=0}^{n_{A_j}-1}\cc_f^{\pm}(f^i(I_{A_j}))=:\cc_f^{\pm}(V),$$ it follows from Lemma~\ref{LemmaCritnoPoco} that
	$$\omega_f(x)=\omega_g(x)\subset\bigcap_{j=0}^s\bigcap_{i=0}^{n_{A_j}-1}\bigg(\bigcup_{\gamma\in\cc_g\setminus\cc_g^{^\pm}(f^i(I_{A_j}))}\overline{\co_g^+(\gamma_\pm)}\bigg) \stackrel{*}{=} \bigcup_{\gamma\in\cc_g\setminus\cc_g^{^\pm}(V)}\overline{\co_g^+(\gamma_\pm)}=$$
	$$=\bigcup_{\gamma\in\cc_f\setminus\cc_f^{^\pm}(V)}\overline{\co_f^+(\gamma_\pm)}\subset\bigcup_{\gamma\in(\cc_f)^\pm\setminus\BB_0(f)}\overline{\co_f^+(\gamma_-)}.$$
for almost every $x\in[0,1]\setminus(\BB_0(f)\cup\BB_1(f))$, where ($*$) is true because $\cc_g^\pm(f^i(I_{A_j}))\cap\cc_g^\pm(f^k(I_{A_\ell}))=\emptyset$ whenever $(i,j)\ne(k,\ell)$.
	
\end{proof}

By definition, if $J$ is a wandering interval, then $J\cap\cc_f=\emptyset$.
Nevertheless, the border of $J$, $\partial J$, may contain some $c\in\cc_f$.
Let $\cw_f^-$ be the set of points $c\in\cc_f$ such that $(a,c)$ is a wandering interval for some $a<c$. Similarly, $\cw_f^+$ is the set of points $c\in\cc_f$ such that $(c,b)$ is a wandering interval for some $b>c$.
If $c\in\cw_f^-$, define $p_{c_-}$ as the infimum of all $0<t<c$ such that $(t,c)$ is a wandering interval and define $J_{c_-}=(p_{c_-},c)$. Analogously, we define $p_{c_+}$ and $J_{c_+}$ when $c\in\cw_f^+$.
The interval $J_{c_\pm}$ is called the {\em exceptional wandering interval} associated to $c_{\pm}$.


\begin{Example}\label{ExemplodeEwi}
Let $f:[0,1]\setminus\{c\}\to[0,1]$, $0<c<1$, be a $C^3$ contracting Lorenz map with $Sf<0$ such that $\leb(f([v_0,v_1]\setminus\{c\}))<|v_0-v_1|$, where $v_0=f(c_+)$ and $v_1=f(c_-)$. In this case, the map $G:=f|_{J\setminus\{c\}}$ is injective but not surjective, where $J=[v_0,v_1]$. Such a map $G$ is called a gap map and it is known that it has a well defined rotation number. If its rotation number is irrational, then $I_0:=(G(v_1),G(v_0))=(f(v_1),f(v_0))$ is a wandering interval. In this case, $I=(a,v_0)$ is a wandering interval for $f$, where $a<v_0$ and  $f(a)=f(v_1)$. Thus, choose $f$ so that $G$ has an irrational rotation number and consider $F:[0,1]\setminus\{a,c\}\to[0,1]$ given by
$$F(x)=\begin{cases}
f(x) & \text{ if }x>a\\
f(x)/f(a) & \text{ if }x<a
\end{cases},
$$
see Figure~\ref{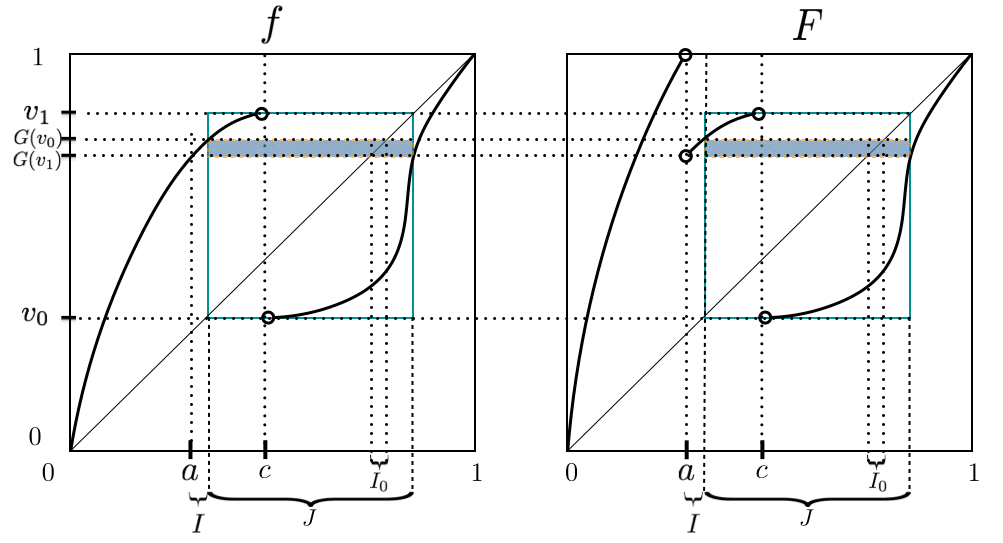}.
The interval $I$ is a wandering interval for $F$ with the exceptional point $a$ belonging to the boundary of $I$. 
\begin{figure}
\begin{center}\includegraphics[scale=.3]{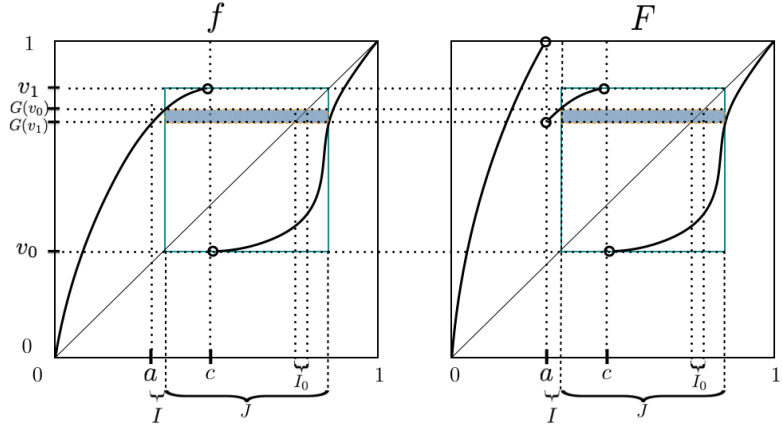}\\
\caption{The map $F$ on the right is equal to $f$ in $(a,1]$ and it has a wandering interval $I$ with $\partial I\cap\cc_F\ne\emptyset$.
}
\label{GAP-Ewi.png}
\end{center}
\end{figure}
\end{Example}

For this, define $$\BB(f)=\BB_0(f)\cup\BB_1(f)\cup\BB_2(f),$$
where
$$\BB_2(f)=\{x\in[0,1]\setminus\co_f^-(\cc_f)\,;\,\co_f^+(x)\cap J_{c_\pm}\ne\emptyset\text{ for some }c\in(\cc_f)_{0}^{\pm}\}$$
and $$(\cc_f)_{0}^{\pm}=\{c\in\cw_f^\pm\,;\,p_{c_\pm}\in\co_f^-(\cc_f)\}.$$

Set $(\cc_f)_\pm^\star=\big((\cc_f)^\pm\setminus\BB_0(f)\big)\setminus(\cc_f)^\pm_0$. That is, $(\cc_f)_-^\star$ is the set of points $c\in\cc_f$ such that $(c-\varepsilon,c)\not\subset\BB_0(f)$  $\forall\,\varepsilon>0$ and $c\in\cw_f^-$ only if $p_{c_-}\notin\co_f^-(\cc_f)$. Similarly, $(\cc_f)_+^\star$ is the set of points $c\in\cc_f$ such that $(c,c+\varepsilon)\not\subset\BB_0(f)$ $\forall\,\varepsilon>0$ and $c\in\cw_f^+$ only if $p_{c_+}\notin\co_f^-(\cc_f)$. Given $c\in(\cc_f)_\pm^\star$,  define the {\em shadow of $c_\pm$} as 
$$c_\pm^\star=
\begin{cases}
c &\text{ if }c\notin\cw_f^\pm\\
p_{c_\pm} &\text{ if }c\in\cw_f^\pm
\end{cases}
$$

Given $L$ and $K\subset\cc_f$, let $$\Agemo_f(L,K)=\{x\in[0,1]\setminus\BB(f)\;;\,\cc_f^-(x)=L\text{ and }\cc_f^+(x)=K\},$$ 
where $\cc_f^\pm(x)=\{c\in(\cc_f)_\pm^\star\,;\,(c_\pm^\star)_\pm\in\omega_f(x)\}$, that is,  $$\cc_f^-(x)=\{c\in(\cc_f)_-^\star\,;\,c_-^\star\in\overline{(0,c_-^\star)\cap\co_f^+(x)}\,\}$$
and
$$\cc_f^+(x)=\{c\in(\cc_f)_+^\star\,;\,c_+^\star\in\overline{(c_+^\star,1)\cap\co_f^+(x)}\,\}.$$



\begin{Lemma}[Parallax]\label{ParallaxII}
Given $L$ and $K\subset\cc_f$, we have that
$$\omega_f(x)=\bigg(\bigcup_{c\in L}\overline{\co_f^+(c_-)}\bigg)\cup\bigg(\bigcup_{c\in K}\overline{\co_f^+(c_+)}\bigg),$$
for almost every $x\in\Agemo_f(L,K)$.
\end{Lemma}
\begin{proof}

As $\omega_f(x)\supset \big(\bigcup_{c\in L}\overline{\co_f^+(c_-)}\big)\cup\big(\bigcup_{c\in K}\overline{\co_f^+(c_+)}\big)$ for every $x\in\Agemo_f(L,K)$, we only need  to show that the reverse inclusion is true for almost every $x\in\Agemo_f(L,K)$.

Define $$B_n=\bigg(\bigcup_{c\in (\cc_f)_0^\pm}J_{c_\pm}\bigg)\cup\bigg(\bigcup_{c\in(\cc_f)_-^\star\setminus L}(c_-^\star-1/n,c)\bigg)\cup\bigg(\bigcup_{c\in(\cc_f)_+^\star\setminus K}(c,c_+^\star+1/n)\bigg)$$
and
$$\Agemo_n=\{x\in\Agemo_f(L,K)\,;\,\co_f^+(x)\cap B_n=\emptyset\}.$$
Note that $$\Agemo_f(L,K)=\bigcup_{n\ge n_0}\Agemo_n,\;\;\forall\,n_0\ge1.$$

Let $n_0\ge1$ so that $\cc_f\cap B_{n_0}=\emptyset$ and $n>n_0$ such that $\leb(\cu_n)>0$.
For each $c\in (\cc_f)^\star_-\setminus L$
choose points $c_-^\star-1/n<a_{c_-}<b_{c_-}<c_-^\star$ such that $a_{c_-}\notin\co_f^-(\cc_f)$ and $b_{c_-}\in\co^{-}_{f}(\cc_{f})$.
By the maximality of $J_{c_-}$, the interval $(c_-^\star-1/n,c_-^\star)$ is not wandering.
Thus, it follows from Lemma~\ref{LemmaHomterval} that either $\#((c_-^\star-1/n,c_-^\star)\setminus\BB_0(f))\le1$, or $(c_-^\star-1/n,c_-^\star)\cap\co^{-}_{f}(\cc_{f})\ne\emptyset$.
The first situation is impossible. Indeed, note that either $c>c_-^\star=p_{c_-}\notin\co_f^{-}(\cc_f)$ (because $c\notin(\cc_f)_0^-$) or $c=c_-^\star$. In any case,  $\omega_f(c_-^\star)=\omega_f(c_-)$ and, as a consequence, $c\in(\cc_f)^-\cap\BB_0(f)$, contradicting our hypothesis.
So, we have $(c_-^\star-1/n,c_-^\star)\cap\co^{-}_{f}(\cc_{f})\ne\emptyset$ and we are free to choose $b_{c_-}$ as above. After that, choose any $a_{c_-}\in(c_-^\star-1/n,b_{c_-})$. Analogously, 
choose, for each $c\in (\cc_f)^\star_+\setminus K$, points $c_+^\star<b_{c_+}<a_{c_+}<c_+^\star+1/n$ with  $a_{c_+}\notin\co_f^-(\cc_f)$ and $b_{c_+}\in\co^{-}_{f}(\cc_{f})$.

For each $c\in(\cc_f)^\pm\setminus\BB_0(f)$, let
$$q_{c_-}=
\begin{cases}
(c-p_{c_-})/2 &\text{ if }c\in(\cc_f)_0^-\\
(c-b_{c_-})/2 &\text{ if }(\cc_f)^\star_-\setminus L
\end{cases}$$
and
$$q_{c_+}=
\begin{cases}
(p_{c_+}-c)/2 &\text{ if }c\in(\cc_f)_0^+\\
(b_{c_+}-c)/2 &\text{ if }(\cc_f)^\star_+\setminus K
\end{cases}.$$

Consider $g:[0,1]\setminus\cc_g\to[0,1]$ given by 
$$g(x)=
\begin{cases}
f(x) & \text{ if }x\notin{B_n}\\
q_{c_\pm}+\sigma(f(x)-f(q_{c_\pm})) & \text{ if }x\in J_{c_\pm}\text{ for some }c\in (\cc_f)_0^\pm\\
q_{c_-}+\sigma(f(x)-f(q_{c_-})) & \text{ if }x\in(b_{c_-},c)\text{ for some }c\in (\cc_f)^\star_-\setminus L\\
q_{c_+}+\sigma(f(x)-f(q_{c_+})) & \text{ if }x\in(c,b_{c_+})\text{ for some }c\in (\cc_f)^\star_+\setminus K
\end{cases},
$$
where $\sigma=(2\sup|f'|)^{-1}$ and
$$\cc_g=\cc_f\cup\{p_{c_\pm}\,;\,c\in(\cc_f)_0^\pm\}\cup\{b_{c_-}\,;\,c\in(\cc_f)_-^\star\setminus L\}\cup\{b_{c_+}\,;\,c\in(\cc_f)_+^\star\setminus K\}.$$

Note that $\BB_0(g)\supset(\BB_0(f)\cup B_n)$. 
As $f(x)=g(x)$ for all $x\notin B_n$, we have that $f^j(x)=g^j(x)$ $\forall\,x\in\Agemo_n$ and $\forall\,j\ge0$. Thus, it follow from Corollary~\ref{CorollaryCritnoPoco} that
$$\omega_f(x)=\omega_g(x)\subset\bigcup_{c\in(\cc_g)^\pm\setminus\BB_0(g)}\overline{\co_g^+(c_\pm)},$$
for almost every $x\in [0,1]\setminus(\BB_0(g)\cup\BB_1(g))\supset\Agemo_n$.

Note that  $c_\pm$ and $(p_{c_\pm})_\mp\in\BB_0(g)$ for every $(\cc_f)_0^\pm$,  $c_-$ and $(b_{c_-})_+\in\BB_0(g)$ for every $c\in (\cc_{f})_-^\star\setminus L$ and also,
$c_+$ and $(b_{c_+})_-\in\BB_0(g)$ for every $c\in (\cc_f)_-^\star\setminus K$.
Thus, writing
$$\widetilde{L}=\{p_{c_-}\,;\,c\in(\cc_f)_0^-\,\text{ and }\,(p_{c_-})_-\notin\BB_0(g)\}\cup\{b_{c_-}\,;\,c\in(\cc_f)_-^\star\setminus L\,\text{ and }\,(b_{c_-})_-\notin\BB_0(g)\}$$
and
$$\widetilde{K}=\{p_{c_+}\,;\,c\in(\cc_f)_0^+\,\text{ and }\,(p_{c_+})_+\notin\BB_0(g)\}\cup\{b_{c_+}\,;\,c\in(\cc_f)_+^\star\setminus K\text{ and }(b_{c_+})_+\notin\BB_0(g)\},$$
we have $$(\cc_g)^-\setminus\BB_0(g)=L\cup\widetilde{L}\;\;\text{ and }\;\;(\cc_g)^+\setminus\BB_0(g)=K\cup\widetilde{K}.$$
Furthermore,
$$
c\in\cc_f\implies
\begin{cases}
	c\in L &\text{ or }c_-\in\BB_0(g)\\
	c\in K &\text{ or }c_+\in\BB_0(g)
\end{cases},
$$
$$
\eta\in\widetilde{L} \implies \co_g^+(\eta_-)\subset\{\eta,f(\eta),\cdots,f^{n_{\eta}-1}(\eta)\}\cup\bigg(\bigcup_{c\in L}\co_f^+(c_-)\,\cup\,\bigcup_{c\in K}\co_f^+(c_+)\bigg)
$$
and
$$
\eta\in\widetilde{K}\implies \co_g^+(\eta_+)\subset\{\eta,f(\eta),\cdots,f^{n_{\eta}-1}(\eta)\}\cup\bigg(\bigcup_{c\in L}\co_f^+(c_-)\,\cup\,\bigcup_{c\in K}\co_f^+(c_+)\bigg),
$$
where $n_{\eta}=\min\{j\ge0\,;\,\eta\in f^{-j}(\cc_f)\}$.

Given $\gamma\in[0,1]$ and $j\ge0$, let
$$\Agemo_n(j,\gamma_-)=\{x\in B_n\,;\,\gamma\in\overline{\co_f^+(x)\cap f^j((\gamma-\varepsilon,\gamma))}\;\;\forall\,\varepsilon>0\}$$
and 
$$\Agemo_n(j,\gamma_+)=\{x\in B_n\,;\,\gamma\in\overline{\co_f^+(x)\cap f^j((\gamma,\gamma+\varepsilon))}\;\;\forall\,\varepsilon>0\}.$$

As $\co_g^+(c_-)=\co_f^+(c_-)$ for every $c\in L$ and $\co_g^+(c_+)=\co_f^+(c_+)$ for every $c\in K$, the proof of the lemma follows from the claim below.

\begin{claim}
If $\gamma\in\widetilde{L}$ and $\leb\big(\Agemo_n(\ell,\gamma_-)\big)>0$ for some $\ell\ge0$, then $$f^\ell(\gamma_-)\in\bigcup_{c\in L}\overline{\co_f^+(c_-)}\,\cup\,\bigcup_{c\in K}\overline{\co_f^+(c_+)}.$$
Similarly, $f^\ell(\gamma_+)\in\bigcup_{c\in L}\overline{\co_f^+(c_-)}\,\cup\,\bigcup_{c\in K}\overline{\co_f^+(c_+)}$, whenever $\gamma\in\widetilde{K}$, $\leb\big(\Agemo_n(\ell,\gamma_+)\big)>0$ and $\ell\ge0$.
\end{claim}	

Suppose that $\gamma\in\widetilde{L}$ and $\leb\big(\Agemo_n(\ell,\gamma_-)\big)>0$ for some $\ell\ge0$, the case when $\gamma\in\widetilde{K}$ is analogous.
Let $n_\gamma=\min\{j\ge0\,;\,\gamma\in f^{-j}(\cc_f)\}$ and $\delta_\gamma>0$ small such that $f^{n_\gamma}|_{(\gamma-\delta_\gamma,\gamma)}$ is a diffeomorphism.
Note that $g^j(\gamma_-)=f^j(\gamma)$ for $0\le j \le n_\gamma$. 

Firstly let us assume that $0\le\ell\le n_\gamma$ and
set $p:=f^{\ell}(\gamma)=g^{\ell}(\gamma_-)$.

Suppose by contradiction that $p\notin\bigcup_{c\in L}\overline{\co_f^+(c_-)}\,\cup\,\bigcup_{c\in K}\overline{\co_f^+(c_+)}$. 
As $\co_f^+(a_-)=\co_g^+(a_-)$ and $\co_f^+(b_+)=\co_g^+(b_+)$  for every $a\in L$ and $b\in K$, we get
\begin{equation}\label{EQQut76u}
	p\notin\bigcup_{c\in L}\overline{\co_g^+(c_-)}\,\cup\,\bigcup_{c\in K}\overline{\co_g^+(c_+)}.
\end{equation}

Let $(\alpha',\beta')$ be the connected component of $[0,1]\setminus\big(\bigcup_{c\in L}\overline{\co_g^+(c_-)}\,\cup\,\bigcup_{c\in K}\overline{\co_g^+(c_+)}\;\big)$ containing $p$.

 
Note that $p\notin A$, where $$A=\bigcup_{c\in\widetilde{L}}\{g^j(a_{c_-})\,;\,0\le j\le m\}\;\;\cup\; \bigcup_{c\in\widetilde{K}}\{g^j(a_{c_+})\,;\,0\le j\le m\}$$
and $m=\max\{n_{\eta}\,;\,\eta\in\widetilde{L}\cup\widetilde{K}\}$.
Indeed, if $A\ni g^j(a_{c_\pm})=p$, then $a_{c_\pm}\notin\BB_0(g)$ and so $g^i(a_{c_\pm})=f^i(a_{c_\pm})$ $\forall\,i\ge0$. So, $a_{c_\pm}\notin\co_f^-(\cc_f)\ni p$, which is a contradiction. Therefore, let $(\alpha'',\beta'')$ be the connected component of $[0,1]\setminus A$ containing $p$.

Let us assume that $f^{\ell}|_{(\gamma-\delta_\gamma,\gamma)}$ preserves orientation, the other case is analogous.
Thus, $\leb(\Agemo_n(\ell,\gamma_-)\cap (\alpha,p))>0$ and $p\in\overline{\co_f^+(x)\cap[0,p)}$ for every $x\in\Agemo_n(\ell,\gamma_-)$.
As a consequence, $p_-$ cannot be an attracting period-like point and $$\alpha'''=\max\bigg\{[0,p)\cap\bigcup_{c\in(\cc_g)^\pm\cap\BB_0(g)}\co_g^+(c_\pm)\bigg\}$$ is well defined.

If $p_+$ is not an attracting periodic-like point, then define
\begin{equation}\label{EqBETA1}
\beta'''=\min\bigg\{\{1\}\cup(p,1)\cap\bigcup_{c\in(\cc_g)^\pm\cap\BB_0(g)}\co_g^+(c_\pm)\bigg\}.	
\end{equation}
Otherwise, i.e., if $p_+$ is an attracting periodic point, let $t_0$ be the period of $p_+$. As $p_-$ does not belong to the orbit of $p_+$, let $\beta_0>p$ be such that $g^t|_{(p,\beta_0)}$ is an orientation preserving diffeomorphism, $g^t((p,\beta_0))=(p,g^t((\beta_0)_-))\subset(p,\beta_0)$ and $g^j((p,\beta_0))\cap(\alpha''',\beta_0)=\emptyset$  for every $1\le j<t_0$.
For each $c\in\cc_g^\pm((p,\beta_0))$, set $s(c_\pm)=\min\{j\ge0\,;\,f^j(c_\pm)\in(p,\beta_0)\}$ and define
\begin{equation}\label{EqBETA2}\beta'''=\min\{g^{s(c_\pm)}(c_\pm)\,;\,c\in\cc_f^\pm((p,\beta_0))\}.
\end{equation}

Set 
$$I=(\alpha,\beta)=(\alpha',\beta')\cap(\alpha'',\beta'')\cap(\alpha''',\beta''').$$
By construction, we get that
\begin{equation}\label{EqclaimXYZ0}
\co_g^+(c_\pm)\cap(\alpha,p)=\emptyset\text{ for every }c\in\cc_g	
\end{equation}
and also that
\begin{equation}\label{EqclaimXYZ1}
\co_g^+(\alpha)\cap(\alpha,p)=\emptyset=\co_g^+(\beta)\cap(\alpha,\beta).	
\end{equation}

For each $x\in\Agemo_n(\ell,\gamma_-)\cap(\alpha,p)$, let $R(x)=\min\{i\ge1\,;\,g^i(x)\in(\alpha,p)\}$ and let $I_x$ be the maximal open interval satisfying the following  three conditions: (i) $x\in I_x\subset(\alpha,p)$, (ii) $g^{R(x)}|_{I_x}$ is a diffeomorphism and (iii) $g^{R(x)}(I_x)\subset(\alpha,\beta)$.

\begin{subclaim}
	$g^{R(x)}(I_x)=(\alpha,\beta)$, $\forall\,x\in\Agemo_n(\ell,\gamma_-)\cap(\alpha,p)$.
\end{subclaim}
\begin{proof}[Proof of the subclaim]
Suppose for instance that $g^{R(x)}(I_x)\subsetneqq(\alpha,\beta)$ and write $(a,b)=I_x$.
So, $\exists\,0\le i<R(x)$ and $c\in\cc_g$ such that $g^i(a)=c$ and $g^{R(x)}(a_+)\in(\alpha,\beta)$ or $g^i(b)=c$ and $g^{R(x)}(b_-)\in(\alpha,\beta)$.

It follows from (\ref{EqclaimXYZ0}), (\ref{EqclaimXYZ1}) and from $g^{R(x)}(x)\in(\alpha,p)$ that
\begin{enumerate}
\item[(I)] either $g^{R(x)}(a_+)=\alpha$, $g^i(b_-)=c$ for some $0\le i<R(x)$ and $g^{R(x)}(b_-)\in[p,\beta)$
\item[(II)] or $g^{R(x)}(b_-)=\alpha$, $g^i(a_+)=c$ for some $0\le i<R(x)$ and $g^{R(x)}(a_+)\in[p,\beta)$. 
\end{enumerate}

We may assume (I), i.e., that $g^{R(x)}(a_+)=\alpha$, $g^i(b_-)=c$ for some $0\le i<R(x)$ and $g^{R(x)}(b_-)\in[p,\beta)$,  the proof of (II) is analogous. 
That being so, either $g^i(a_+)<g^i(b_-)=c$ or $g^i(a_+)>g^i(b_-)=c$.
Assume that $g^i(a_+)<g^i(b_-)=c$, the second case is analogous.

If $c\in(\cc_f)^-\setminus\BB_0(f)=L\cup\widetilde{L}$, then $g^{R(x)}(b_-)$ $=$ $g^{R(x)-i}(c_-)\in A$ $\cup$ $ \bigcup_{c\in L}\overline{\co_g^+(c_-)}$ $\cup$ $\bigcup_{c\in K}\overline{\co_g^+(c_+)}$, contradicting that
$(\alpha,\beta)$ $\cap$ $\big( A$ $\cup \bigcup_{c\in L}\overline{\co_g^+(c_-)}$ $\cup$ $\bigcup_{c\in K}\overline{\co_g^+(c_+)}\big)$ $=$ $\emptyset$.

Thus, we may suppose that 
$c\in(\cc_f)^-\cap\BB_0(f)$. In this case, there is $r>0$ such that $(c-r,c)\subset\BB_0(g)$ and this implies that
\begin{equation}\label{Eqsubclaim1}
	g^{R(x)}(b_-)=g^{R(x)-i}(c_-)\ne p,
\end{equation}
otherwise $(g^{R(x)-i}(c-r),p)\subset\BB_0(g)$, contradicting that $\leb(\Agemo_n(\ell,\gamma_-))>0$.

If $p_+$ is not an attracting periodic-like point, then it follows from (\ref{EqBETA1}) and (\ref{Eqsubclaim1}) that $g^{R(x)}(b_-)=g^{R(x)-i}(c_-)\notin[p,\beta''')\supset[p,\beta)$, contradicting (I).
Thus, we may assume that $p_+$ is an attracting periodic-like point.

In this case, $\beta=\beta'''\le\beta_0$. Suppose for instance that $g^{k}(c_-)\in[p,\beta''')$ for some $0\le k<R(x)-i$.
As $i+k<R(x)$, we get that $g^{i+k}(x)\notin(\alpha,\beta)$.
So, either 
\begin{equation}\label{Eqsubclaim2}
	g^{i+k}(a_-)<g^{i+k}(x)\le\alpha<p\le g^{k}(c_-)<\beta=\beta'''\le\beta_0
\end{equation}
or
\begin{equation}\label{Eqsubclaim3}
p\le g^{k}(c_-)<\beta=\beta'''\le\beta_0<g^{i+k}(x)<g^{i+k}(a_-).	
\end{equation}
As (\ref{Eqsubclaim2}) implies that $g^{R(x)-(i+k)}(\alpha)\in(\alpha,\beta)$, it follows from (\ref{EqclaimXYZ1}) that $g^{R(x)-(i+k)}(\alpha)\in[p,\beta)$ and so, $g^{R(x)-(i+k)}((\alpha,p))\subset(p,\beta)\subset(p,\beta_0)$. In particular, $\Agemo_n(\ell,\gamma_-)\cap(\alpha,p)\subset\BB_0(g)$ which is a contradiction.
Thus, we may assume (\ref{Eqsubclaim3}).

Therefore, $R(x)-(i+k)=j t_0$ for some $j\ge1$ and $g^{R(x)-(i+k)}|_{(p,g^{i+k}(a_-))}$ is an orientation preserving diffeomorphism, see definition of $t_0$, $\beta_0$ and $s(c_-)$ in the paragraph above (\ref{EqBETA2}). But this is a contradiction, as $$g^{R(x)-(i+k)}(g^{i+k}(a_-))=\alpha<g^{R(x)}(x)=g^{R(x)-(i+k)}(g^{i+k}(x)).$$

So, we conclude that  $g^{k}(c_-)\notin[p,\beta''')$ for every $0\le k<R(x)-i$. This implies that $s(c_-)=R(x)-i$ and as a consequence, $g^{R(x)}(b_-)=g^{R(x)-i}(c_-)>\beta$, contradicting (I).
\end{proof}

%

Let $U:=\bigcup_{x\in (\alpha,p)\cap\Agemo_n(\ell,\gamma_-)}I_x$.
As $I_x\cap I_y\ne\emptyset\implies I_x=I_y$, $\forall\,x,y\in(\alpha,p)\cap\Agemo_n(\ell,\gamma_-)$, the application $R:U\to\NN$ given by $R|_{I_x}\equiv R(x)$ $\forall\,x\in (\alpha,p)\cap\Agemo_n(\ell,\gamma_-)$ is well defined.
Let $G:U\to(\alpha,\beta)$ be the induced map given by $G(x)=g^{R(x)}(x)$.
As $p\in\omega_G(x)$ for every $x\in (\alpha,p)\cap\Agemo_n(\ell,\gamma_-)$, it follows from Lemma~\ref{Lemma375945194558} that $\omega_g(x)\supset\omega_G(x)=[\alpha,\beta]$ for almost every $x\in(\alpha,\beta)$. As $\omega_f(x)=\omega_g(x)$ for every $x\in\Agemo_n(\ell,\gamma_-)$, we get that almost every point $x\in (\alpha,p)\cap\Agemo_n(\ell,\gamma_-)$ is contained in $\BB_1(f)$, contradicting the definition of $\Agemo_n(\ell,\gamma_-)$.

\begin{figure}
\begin{center}\includegraphics[scale=.25]{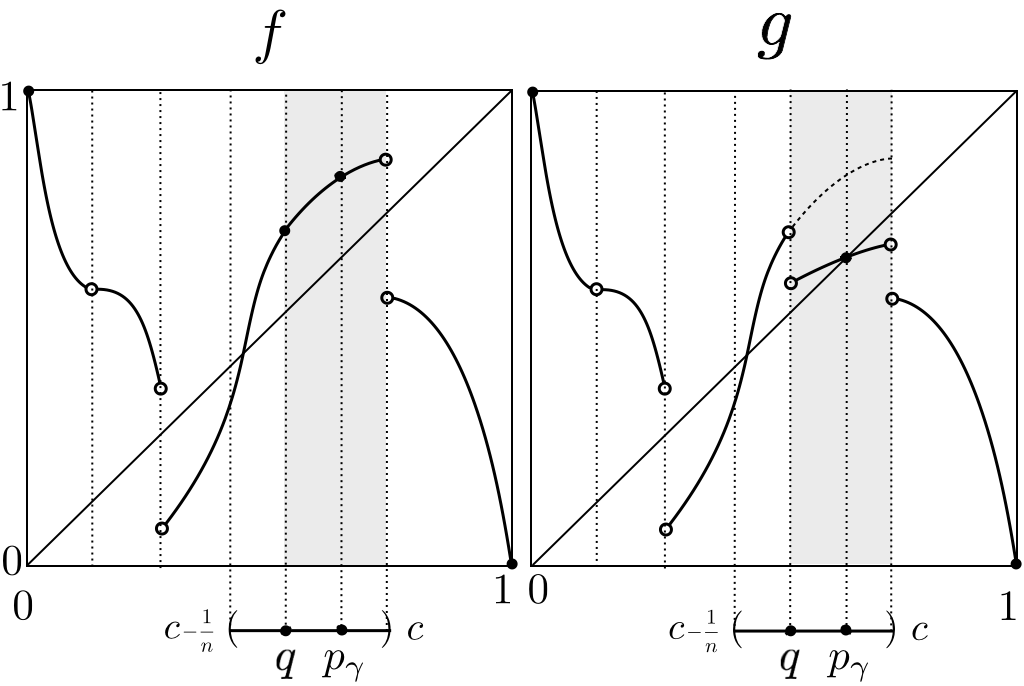}\\
\caption{
The map $g$ (on the right side) is equal to $f$ outside the interval $[q,c]$, it has $p\in (q,c)$ as a fixed point and $\cc_f\cup\{q\}$ as its exceptional set.
}
\label{MapaPerturbadocaso2.png}
\end{center}
\end{figure}

\end{proof}

\begin{Theorem}\label{Theorem1aa}
For almost every point $x\in[0,1]\setminus(\BB_0(f)\cup\BB_1(f))$ we have that 
\begin{equation}\label{Eq8287vbj}
\omega_f(x)=
\bigcup_{\footnotesize{
\begin{array}{c}
(c_\pm^\star)_\pm\in\omega_f(x)\\
c\in(\cc_f)_{\pm}^\star
\end{array}
}}
\overline{\co_f^+((c_\pm^\star)_\pm)}.	
\end{equation}
\end{Theorem}
\begin{proof}
It follows from the Parallax Lemma~\ref{ParallaxII} that 
(\ref{Eq8287vbj}) is true 
for almost every $x\in[0,1]\setminus(\BB_0(f)\cup\BB_1(f)\cup\BB_2(f))$. Thus, we need only to check that (\ref{Eq8287vbj}) is also true for almost every $x\in\BB_2(f)$.

Recall that $x\in\BB_2(f)$ means that $f^{j_0}(x)\in J_{c_\pm}$ for some $j_0\ge0$ and an exceptional wandering interval $J_{c_\pm}=(p_-,c)$ or $(c,p_+)$, where $c\in\cc_f$. As there are at most $2\#\cc_f$ exceptional wandering intervals, $n_x=\min\{n\ge1\,;,f^{n_x}(x)\notin\BB_2(f)\}$ is well define. Moreover, $f^{n_x}(x)\in[0,1]\setminus(\BB_0(f)\cup\BB_1(f)\cup\BB_2(f))$, because $f^{n_x}(x)\in f^{n_x-j_0}(J_{c_\pm})$ and $f^{n_x-j_0}(J_{c_\pm})$ still is a wandering interval and so, it cannot intersect $\BB_0(f)$ or $\BB_1(f)$. As $\omega_f(x)=\omega_f(f^{n_x}(x))$ and $\leb\circ f^{-1}\ll\leb$, it follows from Lemma~\ref{ParallaxII} that (\ref{Eq8287vbj}) is true for almost every $x\in\BB_2(f)$.
\end{proof}

\begin{Corollary}\label{CorollaryTirandoMane}
If $f:[0,1]\setminus\cc_f\to[0,1]$ is a $C^3$ local diffeomorphism with negative Schwarzian derivative, with $f(\{0,1\})\subset\{0,1\}$ and $\cc_f\subset(0,1)$ finite, then $\omega_f(x)\cap\cc_f\ne\emptyset$ for almost every $x\in[0,1]\setminus\BB_0(f)$.\end{Corollary}
\begin{proof}
By Lemma~\ref{lematres}, $\interior(\omega_f(x))\cap\cc_f\ne\emptyset$ for every $x\in\BB_1(f)$ and so, $\omega_f(x)\cap\cc_f\ne\emptyset$ for every $x\in\BB_1(f)$.
It follows from Theorem~\ref{Theorem1aa} that, for almost every $x\in[0,1]\setminus(\BB_0(f)\cup\BB_1(f))$, $\exists c\in\cc_f$ depending on $x$ such that $c_\pm^\star\in\omega_f(x)$. If $c_\pm^*=c$, we are done.
Otherwise, $c_\pm^*=p_{c_\pm}$ and $J=(p_{c_-},c)$ or $(c, p_{c_+})$ is a wandering interval. As, $\omega_f(x)\ni c_\pm^\star=p_{c_\pm}$, we get $\omega_f(x)\supset\omega_f(J)$.
So, the proof is concluded if we show that $\omega_f(J)\cap\cc_f\ne\emptyset$.
To prove that $\omega_f(J)\cap\cc_f\ne\emptyset$, suppose by contradiction that $\exists\varepsilon>0$ and $n_0\ge1$ such that $f^n(J)\cap B_{\varepsilon}(\cc_f)=\emptyset$ for every $n\ge n_0$, where $B_{\varepsilon}(\cc_f)=\bigcup_{c\in\cc_f}(c-\varepsilon,c+\varepsilon)$. Thus, consider any non-flat $C^2$ map such that $g(x)=f(x)$ for every $x\in[0,1]\setminus B_{\varepsilon}(\cc_f)$. Note that $I=f^{n_0}(J)$ is a wandering interval for $f$. As $f^j|_I=g^j|_I$ for every $j\ge0$, $I$ is also a wandering interval for $g$, contradicting the fact that $C^2$ non-flat maps don't admit wandering intervals (see Theorem~A, chapter IV in \cite{MvS}).
\end{proof}


\begin{proof}[Proof of Theorem~\ref{mainTheoremMTheoFofA}]
Using Singer's Theorem (see \cite{MvS}), we can conclude that the basin of attraction of each periodic-like attractor contains at least a ``lateral neighborhood of critical point'', that is, there is $\varepsilon>0$ and $c\in\cc_f$ such that $(c,c+\varepsilon)\subset\beta_f(A_j)$, i.e., $c\in(\cc_f)^-\cap\BB_0(f)$, or $(c-\varepsilon,c)\subset\beta_f(A_j)$, i.e., $c\in(\cc_f)^+\cap\BB_0(f)$.
Each cycle of intervals contains at least one critical point in its interior (Lemma~\ref{lematres}, Section \ref{Sectionsettingsandpre}). So, let $C_1$ be the set of all critical point contained in the interior of a cycle of interval. Of course that $\big((\cc_f)^\pm\cap\BB_0(f)\big)\cap C_1=\emptyset$ and so, there exist at most $\#\cc_f$ cycles of intervals and $2(\#\cc_f-\#C_1)$ periodic-like attractors.

On the other hand, if $x\in[0,1]\setminus(\BB_0(f)\cup\BB_1(f))$, we have that $(c_\pm^\star)_\pm=c_\pm\notin\omega_f(x)$ $\forall\,c\in(\cc_f)^\pm\cap\BB_0(f)$.
Therefore, it follows from the Theorem~\ref{Theorem1aa} that for almost every $x\in[0,1]\setminus(\BB_0(f)\cup\BB_1(f))$ there is some $L_x\subset \cc_f\setminus((\cc_f)^-\cap\BB_0(f))$ and $K_x\subset \cc_f\setminus((\cc_f)^+\cap\BB_0(f))$ such that
$$\omega_f(x)=\bigg(\bigcup_{c\in L_x}\overline{\co_f^+(c_-)}\bigg)\cup\bigg(\bigcup_{c\in K_x}\overline{\co_f^+(c_+)}\bigg),$$
The number of possible pairs $(L_x,K_x)$ is at most $2^{a+b}-1$, where $a=\#(\cc_f\setminus((\cc_f)^-\cap\BB_0(f))$ and $b=\#(\cc_f\setminus((\cc_f)^+\cap\BB_0(f))$. Thus, $2^{a+b}-1\le 2^{2\#\cc_f-N}-1$, where $N$ is the number of periodic-like attractors.

Given $L\subset \cc_f\setminus((\cc_f)^-\cap\BB_0(f))\cup C_1)$ and $K\subset \cc_f\setminus((\cc_f)^+\cap\BB_0(f))\cup C_1)$, let $$A_{L,K}:=\bigg(\bigcup_{c\in L_x}\overline{\co_f^+(c_-)}\bigg)\cup\bigg(\bigcup_{c\in K_x}\overline{\co_f^+(c_+)}\bigg)$$
and let $\{A_1,\cdots,A_s\}=\{A_{L,K}\,;\,\leb(\{x\,;\,\omega_f(x)=A_{L,K}\})>0\}$.
Thus, $s\le 2^{2\#\cc_f-N}-1$.
Letting $\{A_{s+1},\cdots,A_{s+N}\}$ be the periodic-like attractors of $f$ and $\{A_{s+N+1},$ $\cdots,$ $A_{s+N+M}\}$ being the set of all cycle of intervals $I_1\cup\cdots\cup I_t$ such that $\leb(\{x\,;\,\omega_f(x)=I_1\cup\cdots\cup I_t\})>0$,
we get that $M\le\#C_1$ and that
\begin{equation}\label{Eqpkj7jm}
n\le N+M+2^{2\#\cc_f-N}-1\le \#\cc_f+2^{2\#\cc_f}-1<+\infty	
\end{equation}
 and $$\leb(\beta_f(A_1))+\cdots+\leb(\beta_f(A_{n}))=1.$$
Moreover, $\omega_f(x)=A_j$ for almost every $x\in\beta_f(A_j)$ and every $1\le j \le n$.
\end{proof}

\section{Contracting Lorenz maps}
\label{SectionContractingLorenzmaps}

In the previous section we have obtained an upper bound for the number of attractors for maps of the interval based on the number of their critical points. Applying inequality  (\ref{Eqpkj7jm}) to a contracting Lorenz map, we get that the number of attractors is at most $2^{4}=16$.  In this section we refine our method in the specific case of contracting Lorenz maps, proving Theorem~\ref{mainTheoremMTheoLORB}, where we state that this number is one, for maps that do not display attracting periodic orbits.

Contracting Lorenz maps, that comes from dynamics of three dimensional flows, is an emblematic case, as it is the simplest case of maps of the interval presenting a critical point that is also a discontinuity.

In this section we shall deal with orientation preserving maps of the interval. We say that $f:[0,1]\setminus\{c\}\to[0,1]$, $c\in(0,1)$, is an {\em orientation preserving map} if for every $x\in [0,1]\setminus\{c\}$ there is $\varepsilon>0$ such that $f(a)<f(b)$ $\forall\,a,b\in(x-\varepsilon,x+\varepsilon)\cap [0,1]\setminus\{c\}$ with $a<b$. If $f$ is a local diffeomorphism, this condition is equivalent to $f'(x)>0$ for all $x\in [0,1]\setminus\{c\}$.

\begin{Lemma}\label{Lemma98fgsn54fg}
Let $f:[0,1]\setminus\{c\}\to[0,1]$ be an orientation preserving local homeomorphism and $J=(a,b)$ be a nice interval containing $c$. Let $F:J^*\to J$ be the first return map to $J$, where $J^*=J\cap\bigcup_{j\ge1}f^{-j}(J)$. If $I$ is a connected component of $J^*\cap(a,c)$ such that $F(I)\ne J$, then $c\in\partial I$ and $F(I)=(a,F(c_-))$. Analogously, if $I$ is a connected component of $J^*\cap(c,b)$ such that $F(I)\ne J$, then $c\in\partial I$ and  $F(I)=(F(c_+),b)$.
\end{Lemma}
\begin{proof}
Let $I=(t,s)$ be a connected component of $J^*\cap(a,c)$, the case when $I$ is contained in $(c,b)$ is analogous.
Let $n\ge1$ be such that $F|_I=f^n|_I$ and let $T=(t,s')$ with $s\le s'$ be the maximal interval such that $f^n|_T$ is a homeomorphism and that $F^n(T)\subset J$.
If $s<s'$, then there is some $1\le\ell<n$ such that $f^{\ell}(T)\cap J\ne\emptyset$.
As $f^{\ell}(I)\cap J=\emptyset$, since $n$ is the first return time to $J$ points in $I$, we get $J\not\supset f^{\ell}(T)$. As $c\notin f^{\ell}(T)$, because $f^n|_T$ is a diffeomorphism,  and as $f$ is orientation preserving, we get $f^{\ell}(t)<f^{\ell}(s)<a<f^{\ell}(s')$. But this implies that $f^{n-\ell}(a)\in(a,b)$, contradicting our assumption. So, $s=s'$ and $T=I$.

Now, suppose that $F(I)\ne J$. By the maximality of $T$ there is some $0\le\ell<n$ such that $c\in f^{\ell}(\partial T)$. As $f^{j}(I)\cap J=\emptyset$ $\forall\,1\le j<n$, we get $c\in \partial I$, i.e., $I=T=(t,c)$ and so, $F(I)=(F(t_+),F(c_-))$, $F(t_+):=\lim_{\varepsilon\downarrow0}F(t+\varepsilon)$.
Furthermore, as $t\ne c$ and as  $f^j(I)\cap(a,b)=\emptyset$ $\forall\,0\le j<n$, it follows that $f^j(t)\ne c$ $\forall0\le j<n$. So,
\begin{equation}
\label{Eqyuio876}
F(I)=(f^n(t),F(c_-))
\end{equation}

We claim that $f^{n}(t)=a$. Indeed, 
as $f^n(t)$ is well defined, $f$ is orientation preserving and $f^n((t,c))\subset(a,b)$, we get $a\le f^n(t)<b$. Thus, if $f^n(t)\ne a$, then $a<f^n(t)<b$. By continuity, there is $\varepsilon>0$ such that $f^n|_{(t-\varepsilon,c)}$ is a diffeomorphism, $f^j((t-\varepsilon,c))\cap(a,b)=\emptyset$ $\forall0<j<n$ and $f^n((t-\varepsilon,c))\subset(a,b)$, which contradicts the fact that $I$ is a connected component of $J^*$.

Thus, as $F(I)=(a,F(c_-))$, the proof of the lemma follows from (\ref{Eqyuio876}).

\end{proof}

\begin{Lemma}\label{LemmaI1i1iate}
Let $f:[0,1]\setminus\{c\}\to[0,1]$ be an orientation preserving local homeomorphism. If $J=(a,b)$ is a nice interval containing $c$, then  either $\#\{j\ge0\,;\,f^j(x)\in(a,c)\}<\infty$ $\forall\,x\in(a,c)$ or $\exists J'=(a',b)$ such that $c\in J'\subset J$, $\co_f^+(a')\cap(a',b)=\emptyset$  and $a'\in\per(f)$.
Similarly, either $\#\{j\ge0\,;\,f^j(x)\in(c,b)\}<\infty$ $\forall\,x\in(c,b)$ or $\exists J'=(a,b')$ such that $c\in J'\subset J$,  $\co_f^+(b')\cap(a,b')=\emptyset$ and $b'\in\per(f)$.
\end{Lemma}
\begin{proof} If $a\in\per(f)$ there is nothing to prove. Thus, we may assume that $a\notin\per(f)$.

Suppose  $\exists\,x_0\in(a,c)$ such that $\#\{j\ge0\,;\,f^j(x_0)\in(a,c)\}=\infty$. Let $F:J^*\to J$ be the first return map to $J$, where $J^*=J\cap\bigcup_{j\ge1}f^{-j}(J)$.

\begin{claim}
$\fix(F)\cap(a,c)\ne\emptyset$.
\end{claim}
\begin{proof}[Proof of the claim]

Firstly note that $a\in\partial J$ does not belong to the boundary of any connected component of $J^*$.
Indeed, if $I=(a,p)$ is a connected component of $J^*$, then by Lemma~\ref{Lemma98fgsn54fg}, $F(a_+)=a_+$.
Let $n$ be such that $F|_I=f^n|_I$.
As $\co_f^+(a)\cap(a,b)=\emptyset$, we have that $a\notin\co_f^-(c)$.
So, $F(a_+)=a_+$ implies that $f^n(a)=a$, contradicting our assumption.

First assume that $J^*\cap(a,c)$ has more than one connected component. In this case, let $I=(p,q)$ be a connected component of $J^*$ such that $q<c$. Then, by Lemma~\ref{Lemma98fgsn54fg}, $F(I)=J\supset I$. This implies that either $F$ has a fixed point $y\in I$ proving the claim or that $p=a$ and $F(a_+)=a_+$. As we have seen before, $F(a_+)=a_+$ implies that $f^n(a)=a$, where  $F|_I=f^n|_I$, contradicting our assumption.

Now, suppose that $J^*\cap(a,c)$ has only one connected component. Let $I=(p,q)$ be this single connected component and let $n\ge1$ be such that $F|_I=f^n|_I$.
We may suppose that $F|_I$ does not have a fixed point.
As $F(I)=(a,F(c_-))$, by Lemma~\ref{Lemma98fgsn54fg}, and as $F(x)\ne x$ $\forall\,x\in I$, we get $F(x)<x$, $\forall\,x\in I$.
Because $I=J^*\cap(a,c)$ and $\#\{j\ge0\,;\,f^j(x_0)\in(a,c)\}=\infty$, we get $$a<p<\cdots<F^n(x_0)<F^{n-1}(x_0)<\cdots<F(x_0)<x_0<c.$$
So, $a':=\lim_{n}F^n(x_0)$ belongs to $[p,x_0]\subset(a,c)$ and $F(a')=a'$. \end{proof}

To finish the proof of the lemma, let $a'\in\fix(F)\cap[a,c)$ and let $n$ be the period of $a'$ with respect to $f$. As $F(a')=f^n(a')=a'$ and $F$ is the first return map to $(a,b)$, we get $\{f(q),\cdots,f^{n-1}(a')\}\cap(a,b)=\emptyset$. Thus, $\co_f^+(a')\cap(a',b)=\emptyset$, proving the lemma.

\end{proof}

%

\begin{Lemma}\label{Lemma90102555}
Let $f:[0,1]\setminus\{c\}\to[0,1]$ be an orientation preserving local homeomorphism and let $(a,b)$ be an interval with $c\in (a,b)$.
If $\co_f^+(a)\cap(a,b)=\emptyset$, $a\in\per(f)$,  $c_-$ does not belong to the basin of a periodic-like attractor, $c\in\overline{\co_f^+(c_-)\cap(0,c)}$  and $\co_f^+(c_-)\cap(c,b)=\emptyset$, then there exist an open set $U\subset(a,c)$ and a continuous map $R:U\to\NN$ such that
\begin{enumerate}
\item $U\supset\{x\in(a,c)\,;\,\co_f^+(f^j(x))\cap(a,b)\ne\emptyset$ $\forall\,j>0\}$;
\item $F:U\to(a,b)$ given by $F(x)=f^{R(x)}(x)$ is a local homeomorphism;
\item $F(I)=(a,b)$ for every connected component $I$ of $U$.
\end{enumerate}
Similarly, if $\co_f^+(b)\cap(a,b)=\emptyset$, $b\in\per(f)$, $c_+$ does not belong to the basin of a periodic-like attractor, $c\in\overline{\co_f^+(c_+)\cap(0,c)}$  and $\co_f^+(c_+)\cap(c,b)=\emptyset$, then there exist an open set $U\subset(c,b)$ and a continuous map $R:U\to\NN$ such that
\begin{enumerate}
\item $U\supset\{x\in(c,b)\,;\,\co_f^+(f^j(x))\cap(a,b)\ne\emptyset$ $\forall\,j>0\}$;
\item $F:U\to(a,b)$ given by $F(x)=f^{R(x)}(x)$ is a local homeomorphism;
\item $F(I)=(a,b)$ for every connected component $I$ of $U$.
\end{enumerate}
\end{Lemma}
\begin{proof}
Our purpose in this lemma is to construct an induced map $F:U\to(a,b)$, with $U\subset(a,c)$ or $U\subset(c,b)$ such that $F(I)=(a,b)$ for every connected component $I$ of $U$. The map $F$ will not be the restriction of the first return map to $(a,b)$ and it will be constructed inductively. For this, suppose that $a\in\per(f)$,  $c_-$ does not belong to the basin of a periodic-like attractor, $c\in\overline{\co_f^+(c_-)\cap(0,c)}$  and $\co_f^+(c_-)\cap(c,b)=\emptyset$ (the other case concerning $c_+$ is analogous).


Let $r:\cu\to\NN$ be the first return time to $(a,b)$, i.e., $r(x)=\min\{j\ge1\,;\,f^j(x)\in(a,b)\}$, where $\cu$ is the set of points $x\in(a,b)\setminus\{c\}$ such that $f^n(x)\in(a,b)$ for some $n\in\NN$.
Let $\cf:\cu\to(a,b)$ be the first return map to $(a,b)$, that is, $\cf(x)=f^{r}(x)$.

Let $U_0=\cu\cap(a,c)$ and $\cp_0$ be the collection of connected components of $U_0$.
As $a\in \per(f)$ and $\co_f^+(c_-)\cap(a,c)\ne\emptyset$, there are $I_a,I_0\in\cp_0$ such that $a\in\partial I_a$ and $c\in\partial I_0$. Write $I_a=(a,\alpha)$ and $I_0=(t_0,c)$. Note that $\cf(I)=(a,b)$ $\forall I\in\cp_0\setminus\{I_0\}$ and, as $f$ preserves orientation, 
$\cf(I_0)=(a,f^{r(I_0)}(c_-))\subset(a,c)$. Furthermore, $I_0\ne I_a$. Otherwise $I_0=(a,c)$ and, as $\cf(I_0)\subset(a,c)$, this will imply the existence of a periodic-like attractor, contradicting our hypothesis. Set $R_0=r|_{(a,c)}$ and $F_0=\cf|_{U_0}$.

We now construct a sequence $F_n:U_n\to(a,b)$ of $f$-induced maps defined on open sets $U_n\subset(a,c)$, with induced time $R_n$, i.e., $F_n(x)=f^{R_n(x)}(x)$.
The collection of connected components of $U_n$ will be denoted by $\cp_n$, $n\in\NN$. For each $n\ge0$ there will be an element of $\cp_n$, denoted by $I_n$, such that $c\in\partial I_n$.
This sequence will satisfy the following properties:
\begin{enumerate}
\item $I_a\in\cp_n$ $\forall\,n$;
\item $F_n(I)=(a,b)$ $\forall\,I\in\cp_n\setminus\{I_n\}$ $\forall\,n$;
\item $F_n(I_n)=\big(a,f^{R_n(I_n)}(c_-)\big)\subset(a,c)$ $\forall\,n$;
\item $R_0(I_0)<R_1(I_1)<R_2(I_2)<\cdots$;
\item $a<t_0<t_1<t_2<t_3<\cdots$, where $(t_n,c)=I_n$ $\forall\,n$;
\item $U_{n+1}\cap(a,t_n)=U_n\cap(a,t_n)$ $\forall\,n$;
\item $F_{n+1}|_{(a,t_n)\cap U_{n+1}}=F_{n}|_{(a,t_n)\cap U_n}$ $\forall\,n$;
\item $U_n\supset\{x\in(a,c)$ $;$ $\co_f^+(f^j(x))\cap(a,b)\ne\emptyset$ $\forall\,j>0\}$ $\forall\,n$.
\end{enumerate}

\begin{figure}
\begin{center}\includegraphics[scale=.3]{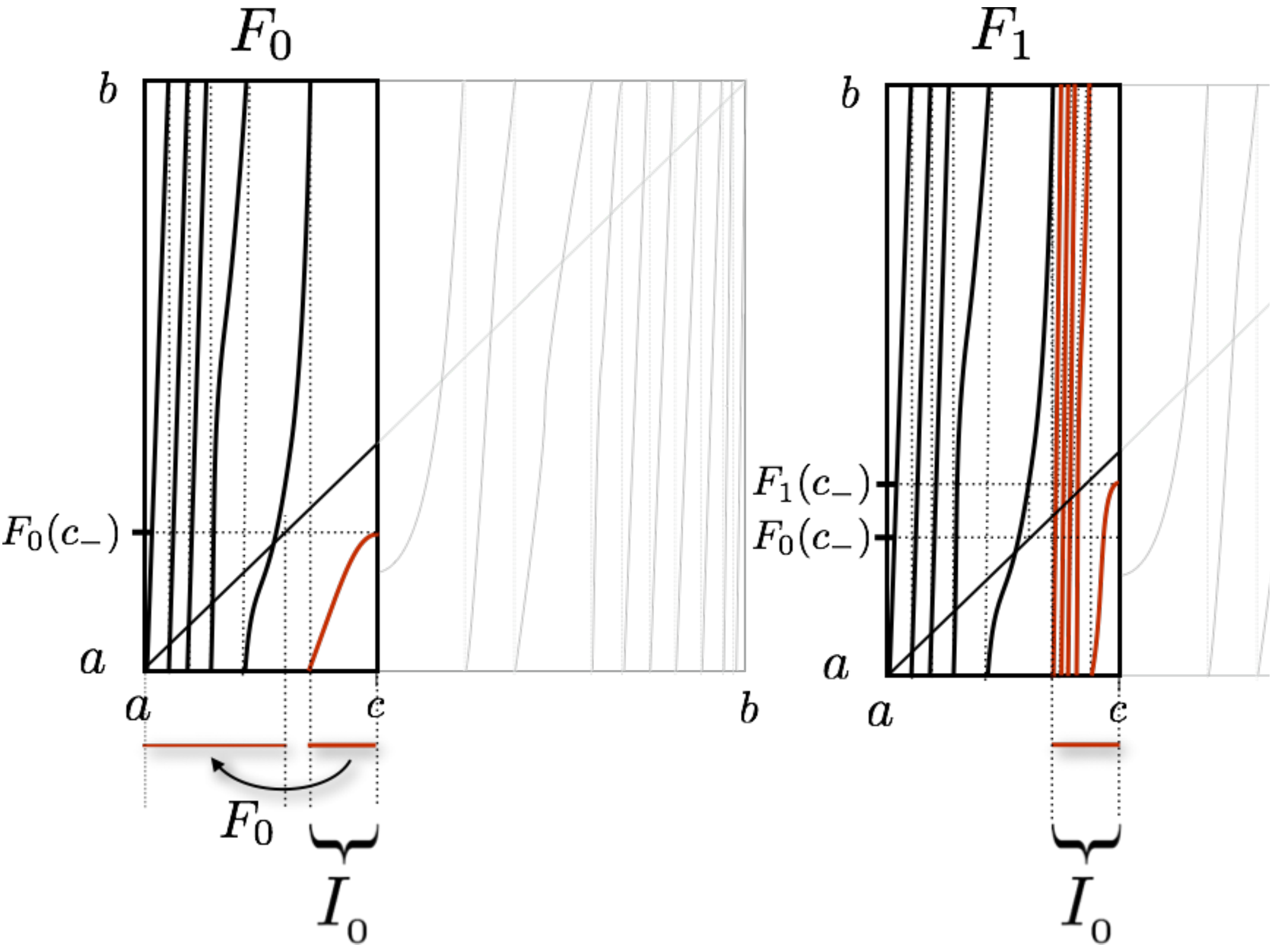}
\caption{In this picture, $F_0$ is the restriction to the interval $(a,c)$ of the first return map to $(a,b)$ and $F_1$ is the first step in the inductive construction in the proof of Lemma~\ref{Lemma90102555}.}\label{ApliIndESPERTA3.png}
\end{center}
\end{figure}

Let $\ell_0=1+\max\{j\ge1\,;\,F^{j}(I_0)\subset I_a\}$.
As $c\in\overline{\RE(\co_{f}^+(c_-))\cap(a,c)}$, $\ell_0$ is the first return time with respect to $F_0$ of ``$c_-$'' to $(\alpha,c)$.
Let $R_1(x)=R_0(x)$ if $x\in(a,t_0)\cap U_0$ and $R_1(x)$ $=$ $\sum_{j=0}^{\ell_0}R_0\big(F_0^j(x)\big)$ for $x\in(t_0,c)\cap F_0^{-\ell_0}(U_0)$.
Set $U_1=\big(U_0\cap(a,t_0)\big)\cup\big((t_0,c)\cap F_0^{-\ell_0}(U_0)\big)$ and let $F_1:U_1\to(a,b)$ be given by $F_1(x)=f^{R_1}(x)$.

Let $\cp_1$ be the collection of connected components of $U_1$. Let $I_1=(F_0^{\ell_0}|_{I_0})^{-1}(I)$, where $I$ is the element of $\cp_0$ containing $F_0^{\ell_0}(c_-)$. By construction, $I_0\supsetneqq  I_1\in\cp_1$, $c\in\partial I_1$ and $R_1(I_1)=R_0(I_0)$ $+$ $(\ell_0-1)r(I_a)$ $+$ $R_0(F_0^{\ell_0}(c_-))$ $\ge R_0(I_0)+1$.
As $U_1\supset F_0^{-\ell_0}(U_0)$, we get $U_1\supset\{x\in(a,c)$ $;$ $\co_f^+(f^j(x))\cap(a,b)\ne\emptyset$ $\forall\,j>0\}$. Define $F_1:U_1\to(a,b)$ by $F_1(x)=f^{R_1(x)}(x)$ (see Figure~\ref{ApliIndESPERTA3.png}).

Inductively, suppose that $F_{n-1}:U_{n-1}\to(a,b)$ is already defined.  
Let $\ell_{n-1}=1+\max\{j\ge1\,;\,F_{n-1}^{j}(I_n)\subset I_a\}$, i.e., $\ell_{n-1}$ is the first return time with respect to $F_{n-1}$ of ``$c_-$'' to $(\alpha,c)$.
Let $R_n(x)=R_{n-1}(x)$ if $x\in(a,t_{n-1})\cap U_{n-1}$ and $R_{n}(x)$ $=$ $\sum_{j=0}^{\ell_{n-1}}R_{n-1}\big((F_{n-1})^j(x)\big)$ for $x\in(t_{n-1},c)\cap (F_{n-1})^{-\ell_{n-1}}(U_{n-1})$.
Set $U_n=\big(U_{n-1}\cap(a,t_{n-1})\big)\cup\big((t_{n-1},c)\cap (F_{n-1})^{-\ell_{n-1}}(U_{n-1})\big)$. Let $F_n:U_n\to(a,b)$ be given by $F_n(x)=f^{R_n}(x)$, $\cp_n$ be the collection of connected components of $U_n$ and $I_n=((F_{n-1})^{\ell_{n-1}}|_{I_{n-1}})^{-1}(I)$, where $I$ is the element of $\cp_{n-1}$ containing $(F_{n-1})^{\ell_{n-1}}(c_-)$. By construction, $I_{n-1}\supsetneqq  I_n\in\cp_{n}$, $c\in\partial I_{n}$ and $R_{n}(I_{n})=R_{n-1}(I_{n-1})$ $+$ $(\ell_{n-1}-1)r(I_a)$ $+$ $R_{n-1}((F_{n-1})^{\ell_{n-1}}(c_-))$ $\ge R_{n-1}(I_{n-1})+1$.
As $U_n\supset (F_{n-1})^{-\ell_{n-1}}(U_{n-1})$, we get $U_n\supset\{x\in(a,c)$ $;$ $\co_f^+(f^j(x))\cap(a,b)\ne\emptyset$ $\forall\,j>0\}$.

\begin{Claim}$t_n\to c$.\end{Claim}
\begin{proof} Otherwise $f^j|_{(t_{\infty},c)}$ will be a homeomorphism for all $\,j\in\NN$, where $t_{\infty}=\lim_n t_n$ (because $F_k$ is monotone on $(t_{\infty},c)$ $\forall\,k\ge1$ and $R_k((t_{\infty},c))=R_k(I_k)\to\infty$). That is, $(t_{\infty},c)$ is a homterval. 
It follows from the homterval lemma (Lemma~\ref{LemmaHomterval}) that $(t_{\infty},c)$ is either a wandering interval or $(t_{\infty},c)\subset\BB_0\cup\co_f^-(\per(f))$. As $(t_{\infty},c)$ can not be a wandering interval, because $c\in\overline{\co_f^+(c_-)\cap(0,c)}$, we get $(t_{\infty},c)\subset\BB_0\cup\co_f^-(\per(f))$. As $c\in\overline{\co_f^+(c_-)\cap(0,c)}$ and  $(t_{\infty},c)\subset\BB_0\cup\co_f^-(\per(f))$, it follows that $c_-$ is an attracting periodic-like point, which contradicts that $c_-$ does not belong to the basin of a periodic-like attractor.
\end{proof}

 To finish the proof, set $t_{-1}=a$, $U=\bigcup_{n\ge0}U_n\cap(t_{n-1},t_n)$ and $F:U\to(a,c)$ by $F|_{U_n\cap(t_{n-1},t_n)}=F_n|_{U_n\cap(t_{n-1},t_n)}$ for $n\ge0$.

%
%

\end{proof}

%
%
%

\begin{Lemma}\label{Lemma85826jfuio6} Let $f:[0,1]\setminus\{c\}\to[0,1]$ be a $C^3$ contracting Lorenz map with negative Schwarzian derivative and without attracting periodic-like orbits. Suppose that $c$ is not in the boundary of a wandering interval.
If $\exists\,a<c$ such that $c\in\omega_f(x)$ for all $x\in(a,c)\setminus\co_f^-(c)$, then $\exists b>c$ such that $c_-\in\omega_f(x)$ and $c_+\in\omega_f(x)$ for all $x\in(a,b)\setminus\co_f^-(c)$.
Similarly,  
if $\exists\,b>c$ such that $c\in\omega_f(x)$ for every $x\in(c,b)\setminus\co_f^-(c)$, then $\exists a<c$ such that $c_-\in\omega_f(x)$ and $c_+\in\omega_f(x)$ $\forall\,x\in(a,b)\setminus\co_f^-(c)$.
\end{Lemma}
\begin{proof}
Suppose that $c\in\omega_f(x)$ for all $x\in(a,c)\setminus\co_f^-(c)$. It follows from the homterval lemma that there is some $n\ge1$ such that $c\in f^n((a,c))$ and that $f^n|_{(a,c)}$ is a diffeomorphism. Therefore, $c\in\omega_f(x)$ for every $x\in f^n((a,c))\setminus\co_f^-(c)$. Let $I=(\widetilde{a},\widetilde{b})$ be the maximal open interval such that
\begin{equation}\label{EQodvipb90yhbv8exfnc}c\in\omega_f(x)\text{ for every }x\in I\setminus\co_f^-(c).
\end{equation}

Let $t_0,t_1\ge0$ be the smallest integers such that $f^{t_0}((\widetilde{a},c))\cap I$ and $f^{t_1}((c,\widetilde{b}))\cap I\ne\emptyset$ (because of (\ref{EQodvipb90yhbv8exfnc}), these numbers are well defined). Furthermore, it follows from the maximality that $I$ is a nice interval and $f^{t_0}((\widetilde{a},c))\subset I\supset f^{t_1}((c,\widetilde{b}))$.
As $f$ is an orientation preserving map, $I=(\widetilde{a},\widetilde{b})$ is a nice interval and $f$ does not admit attracting periodic-like orbits, we get $f^{t_0}(\widetilde{a})=\widetilde{a}<c<f^{t_0}(c_-)<\widetilde{b}$, $\widetilde{a}<f^{t_1}(c_+)<c<\widetilde{b}=f^{t_1}(\widetilde{b})$ and that both $F_0:=f^{t_0}|_{(\widetilde{a},c)}$ and $F_1:=f^{t_1}|_{(c,\widetilde{b})}$ are diffeomorphisms.
Thus, the first return map  to $[\widetilde{a},\widetilde{b}]$, $F:[\widetilde{a},\widetilde{b}]\setminus\{c\}\to[\widetilde{a},\widetilde{b}]$, is given by
$$F(x)=\begin{cases}
F_0(x)&\text{ if }x<c\\
F_1(x)&\text{ if }x>c\\
\end{cases}$$
and it is conjugated to a contracting Lorenz map.

Suppose that exists $x\in(\widetilde{a},\widetilde{b})$ such that $c_+\notin\omega_f(x)\subset\omega_F(x)$.
Note that $\co_F^+(x)\cap(\widetilde{a},c)\ne\emptyset\ne\co_F^+(x)\cap(c,\widetilde{b})$, because $F|_{(\widetilde{a},c)}$ is strictly increasing, $F|_{(c,\widetilde{b})}$ is strictly decreasing and $\lim_{\varepsilon\downarrow0}F(c+\varepsilon)<c<\lim_{\varepsilon\downarrow0}F(c-\varepsilon)$.
Thus, letting $\beta=\min(\omega_F(x)\cap(c,\widetilde{b}))$, we get that $c<\beta<\widetilde{b}$. As $(c,\beta)$ cannot be a wandering interval, it follows from the homterval Lemma that $\exists\,j\ge1$ such that $F^j|_{(c,\beta)}$ is a diffeomorphism and $c\in f^{j}((c,\beta))$. As $F$ preserves the orientation, $F^j(c_+)<c<F^j(\beta)$. This implies by the definition of $\beta$ that $F^j(c_+)<c<\beta\le F^j(\beta)<\widetilde{b}$. Thus, $F^j$ has fixed point $p\in(c,\beta]$. So, $c\notin\omega_f(p)\in I$, contradicting the definition of $I$.

The proof when $\exists\,b<c$ such that $c\in\omega_f(x)$ $\forall\,x\in(c,b)\setminus\co_f^-(c)$ is completely analogous.

\end{proof}

\subsubsection*{Contracting Lorenz maps without periodic attractors} To prove Theorem~\ref{mainTheoremMTheoLORB} we shall consider two main cases: maps with or without periodic-like attractors. Firstly, we will prove the uniqueness of attractors for maps without periodic-like attractor (Corollary~\ref{CororllaryWithoutPerAt}). Thereafter, we study maps with periodic-like attractors, showing that we get one or, at most, two attractors.

\begin{Lemma}[Maps with $c$ in the boundary of a wandering interval]\label{LemmaWCritico}  
Let $f:[0,1]\setminus\{c\}\to[0,1]$ be a $C^3$ contracting Lorenz map with $Sf<0$.
If $c$ belongs to the boundary of a  wandering interval, then there is a transitive Cantor set $A$ such that $\omega_f(x)=A$
for almost every $x\in[0,1]$. Furthermore, $A=\omega_f(c_-)$ or $\omega_f(c_+)$ .
\end{Lemma}

\begin{proof}
Suppose that $(c-\delta,c)$ is a wandering interval for some $\delta>0$, the proof of the other case is analogous. Let $J=(a,c)$ be the maximal wandering interval containing $(c-\delta,c)$. As a cycle of intervals must contains $c$ in it interior, it follows that $f$ cannot have cycles of intervals. So, as $f$ does not have cycle of intervals, $\overline{\co_f^+(c_+)}$ is a totally disconnected compact set.

We observe that $f$ also cannot have a periodic-like attractors. Indeed, if $A$ is a periodic-like attractor for $f$, then it follows from Singer's Theorem \cite{Si} that $(c-\varepsilon,c)$ or $(c,c+\varepsilon)\subset\beta_f(A)$ for some $\varepsilon>0$. As $(a,c)$ is a wandering interval, $(c-\varepsilon,c)\not\subset\beta_f(A)$. So, $(c,c+\varepsilon)\subset\beta_f(A)$. As a consequence, $(a,c+\varepsilon)\cap f^j(I)=\emptyset$ $\forall\,j\ge1$, contradicting Corollary~\ref{CorollaryTirandoMane}.

As $(a,c)$ is a wandering interval, it follows from Corollary~\ref{CorollaryTirandoMane} that
\begin{equation}\label{Eq987634h}
c_+\in\omega_f(x)\text{ for almost every }x\in[0,1]
\end{equation}
As a consequence, $(c,b)$ cannot be a wandering interval $\forall\,b>c$. So, $a=c_-^\star$ and $c_+^\star=c$. Furthermore, using Theorem~\ref{Theorem1aa}, we get that
\begin{equation}\label{Eq876fvbnk}\omega_f(x)=
\begin{cases}
	\overline{\co_f^+(a)}\cup\overline{\co_f^+(c_+)} & \text{ if }a\in\omega_f(x)\vspace{0.1cm}\\
	\overline{\co_f^+(c_+)} & \text{ if }a\notin\omega_f(x)
\end{cases}
\end{equation}
for almost every $x\in[0,1]$.

It follows from (\ref{Eq987634h}) that $c_+\in\omega_f(J)=\omega_f(a)=\omega_f(c_-)$.
In particular, $\overline{\co_f^+(c_+)}\subset\omega_f(a)=\omega_f(c_-)$.
Thus, if $a$ is recurrent, we get $$a\in\omega_f(a)=\omega_f(J)=\omega_f(c_-)=\overline{\co_f^+(a)}\cup\overline{\co_f^+(c_+)}$$ and it follows from (\ref{Eq876fvbnk}) that $\omega_f(x)=\omega_f(c_-)$ for almost every $x\in[0,1]$. Furthermore, as $a\in\omega_f(a)=\omega_f(c_-)$ and $a\notin\co_f^-(c)$, it follows from Lemma~\ref{LemmaRecorrente} in the Appendix that $\omega_f(c_-)$ is a transitive Cantor set. Thus, to conclude the proof, we may assume that $a\notin\omega_f(a)$.

\begin{Claim}\label{Claim87ygbnnbg}
If $\exists\,\varepsilon>0$ such that $c\in\omega_f(x)$ for every $x\in(c,c+\varepsilon)\setminus\co_f^-(c)$, then $c\in\omega_f(c_+)$.
\end{Claim}
\begin{proof}[Proof of the claim]
As $c\in\omega_f(J)$, we have that $c\in\omega_f(x)$ $\forall\,x\in(a,c+\varepsilon)\setminus\co_f^-(c)$. Thus, let $I=(\alpha,\beta)$ be the maximal open interval such that $c\in\omega_f(x)$ for every $x\in I\setminus\co_f^-(c)$.
Note that $I$ is a nice interval.
Furthermore, if $F$ is the first return map to $[\alpha,\beta]$, then it follows from the maximality of $I$ that Dom$(F)=[\alpha,\beta]\setminus\{c\}$ and that $F$ is conjugated to a contracting Lorenz map $\widetilde{f}$ ($F$ is renormalizable to $\widetilde{f}$). If $c\notin\omega_f(c_+)$, then $F(c_-)=\alpha$, which implies that $F$ is not injective,  contradicting Lemma~\ref{LemmaFUGAdoCherry} (in the Appendix)  applied to $\widetilde{f}$.
\end{proof}

\begin{Claim}\label{Claimuu98recu}
$c\in\omega_f(c_+)$	
\end{Claim}
\begin{proof}[Proof of the claim]
Suppose that $c\notin\omega_f(c_+)$. Let $(\alpha_0,\beta_0)$ be the connected component of $[0,1]\setminus\overline{\co_f^+(c_+)}$. Observes that $\alpha_0<a$.
By Claim~\ref{Claim87ygbnnbg}, there exists $p\in(c,\beta_0)\setminus\co_f^-(c)$ such that $c\notin\omega_f(p)$.
Let $(\alpha_1,\beta_1)$ be the connected component of $[0,1]\setminus\overline{\co_f^+(p)}$ containing $c$. Notice that both  $(\alpha_0,\beta_0)$ and $(\alpha_1,\beta_1)$ are nice intervals.
Thus, $(\alpha,\beta):=(\alpha_0,\beta_0)\cap(\alpha_1,\beta_1)$ is a nice interval containing $(a,\beta)$.

Let $\NN_0=\{j\ge1\,;\,f^j(J)\subset(c,\beta)\}$ and $V=\bigcup_{j\in\NN_0}f^j(J)$.
For each $x\in V$, let $R(x)=\min\{\ell\ge1\,;\,f^\ell(x)\in(c,\beta)\}$
and let $I_x$ be the maximal open interval containing $x$ such that $f^{R(x)}|_{I_x}$ is a diffeomorphism and that $f^{R(x)}(I_x)\subset(a,\beta)$.
As $f$ preserves orientation and $\co_f^+(\beta)\cap(a,\beta)=\emptyset$, it follows from the maximality of $I_x$ that $I_x\cap I_y\ne\emptyset$ $\implies$ $I_x=I_y$ $\forall\,x,y\in V$.
Thus, the map $F:\bigcup_{x\in V}I_x\to(a,\beta)$ given by $F(y)=f^{R(x)}(y)$, for every $y\in I_x$ $\forall\,x\in V$, is well defined.
As $c\in\omega_F(x)$ for every $x\in V$. It follows from Lemma~\ref{Lemma375945194558} that $\omega_f(x)\supset\omega_F(x)=[a,\beta]$ for almost every $x\in[a,\beta]$, which is a contradiction.
\end{proof}

\begin{Claim}\label{Claimsususu}
If $\leb(V(a))>0$, then $a\in\overline{\co_f^+(c_+)}$, where  $V(a)=\{x\in[0,1]\setminus\co_f^-(c)\,;\,a\in\omega_f(x)\}$.
\end{Claim}
\begin{proof}[Proof of the claim]
Suppose by contradiction that $\leb(V(a))>0$ and $a\notin\overline{\co_f^+(c_+)}$. As we are assuming that $a\notin\omega_f(a)$ ($=\omega_f(J)=\omega_f(c_-)$), let $\alpha=\max\{x<c\,;\,x\in\overline{\co_f^+(c_-)}\cup\overline{\co_f^+(c_+)}\}$. Observe that $(\alpha,c)$ is a nice interval containing $a$. 

Let $F:(\alpha,c)^*\to(\alpha,c)$ be the first return map to $(\alpha,c)$, with $(\alpha,c)^*=\{x\in(\alpha,c)$ $;$ $\co_f^+(f(x))$ $\cap$ $(\alpha,c)$ $\ne$ $\emptyset\}$.
As $(\alpha,c)$ is a nice interval and $\co_f^+(c_\pm)\cap(\alpha,c)=\emptyset$, $F(I)=(\alpha,c)$ for every connected component $I$ of $(\alpha,c)^*$. Because of $\leb\circ f^{-1}\ll\leb$, we get that $\leb((\alpha,c)\cap V(a))>0$ and, as a consequence, it follows from Lemma~\ref{Lemma375945194558} that $\omega_f(x)\supset\omega_F(x)=[\alpha,c]$ for almost every $x\in(\alpha,c)$, which is a contradiction.
\end{proof}

It follows from Claim~\ref{Claimuu98recu} that $f(c_+)\in\omega_f(f(c_+))=\omega_f(c_+)=\overline{\co_f^+(c_+)}$.
As $f(c_-)\notin\co_f^-(c)$, it follows from Lemma~\ref{LemmaRecorrente} in the Appendix, and from the fact that $f(c_+)\notin\per(f)$, that $\omega_f(c_+)$ is a transitive Cantor set.
So, to finish the proof, notice that if $\leb(V(a))>0$, then we can use (\ref{Eq876fvbnk}) and Claim~\ref{Claimsususu} to obtain that $\omega_f(x)=\omega_f(c_+)=\overline{\co_f^+(c_+)}=\overline{\co_f^+(a)}\cup \overline{\co_f^+(c_+)}$ for almost every $x\in[0,1]$. On the other hand, if $\leb(V(a))=0$, then the proof follows directly from (\ref{Eq876fvbnk}).
\end{proof}

\begin{Proposition}\label{PropositioTudoNoCritico}
Let $f:[0,1]\setminus\{c\}\to[0,1]$ be a $C^3$ contracting Lorenz map with negative Schwarzian derivative and such that $c$ is not in the boundary of a wandering interval. If $f$ does not have an attracting  periodic-like orbit, then $c_-\in\omega_f(x)$ and $c_+\in\omega_f(x)$ for Lebesgue almost all $x\in[0,1]$. 
\end{Proposition}
\begin{proof}
Suppose that there exists $W\subset[0,1]$ with positive measure and such that $c_+\notin\omega_f(x)$ $\forall\,x\in W$ (the case where $c_-\notin\omega_f(x)$ for a positive set of points $x\in[0,1]$ is analogous).

As $\leb(\{x\,;\,c\notin\omega_f(x)\})=0$ (Corollary~\ref{CorollaryTirandoMane}), we get that $c_-\in\omega_f(x)$ for Lebesgue almost every $x\in W$. Thus, there is some $\varepsilon>0$ and $V\subset W$, with $\leb(V)>0$, such that
\begin{equation}\label{Eqoioi9987}\overline{\co_f^+(x)}\cap(c,c+\varepsilon)=\emptyset
\end{equation}
 and $c_-\in\omega_f(x)$ $\forall\,x\in V$. Furthermore, as $\leb\circ f^{-1}\ll\leb$, $\leb(V\cap(c-\delta,c))>0$ $\forall\delta>0$.

Notice that $\co_f^+(c_-)\cap(c,c+\varepsilon)=\emptyset$. Otherwise $\co_f^+(x)\cap(c,c+\varepsilon)\ne\emptyset$ $\forall\,x\in V$, because $f$ is continuous and $c\in\overline{\co_f^+(x)\cap(0,c)}$ $\forall\,x\in V$.

If $\exists\,\delta>0$ such that $c\in\omega_f(x)$ for every $x\in(c-\delta,c)\setminus\co_f^-(c)$ or for every $x\in(c,c+\delta)\setminus\co_f^-(c)$,
then it follows from  Lemma~\ref{Lemma85826jfuio6} that there is an interval $(a,b)\subset(c-\delta,c+\delta)$ containing $c$ such that $c_-$ and $c_+\in\omega_f(x)$ for every $x\in(a,b)\setminus\co_f^-(c)$.
This is a contradiction with the fact that $\leb(W\cap(a,c))>0$.

So, for each $\delta>0$ there exist $c-\delta<a_0<c<b_0<c+\delta$ such that $\overline{\co_f^+(a_0)}\not\ni c\notin \overline{\co_f^+(b_0)}$. Letting $(a_1,b_1)$ be the connected component of $[0,1]\setminus\overline{\co_f^+(a_0)}$ and $(a_2,b_2)$ being the connected component of $[0,1]\setminus\overline{\co_f^+(b_0)}$, we have that both $(a_1,b_1)$ and $(a_2,b_2)$ are nice intervals and that $c-\delta<a_1<c<b_2<c+\delta$. Thus, $(a_{\delta},b_{\delta}):=(a_1,b_1)\cap(a_2,b_2)$ is a nice interval containing $c$ and contained in $(c-\delta,c+\delta)$.

Firstly, suppose that $c_-$ is not recurrent, i.e., $c_-\notin\omega_f(c_-)$.
In this case, let $\delta\in(0,\varepsilon)$ be such that $\co_f^+(f(c_-))\cap(c-\delta,c+\delta)=\emptyset$ and $J:=(a_\delta,b_\delta)$. It is easy to see that $\co_f^+(a)\cap(a,b)=\emptyset$. Let 
$F:J^*\to J$ be the first return map to $J$, where $J^*=\{x\in J\,;\,\co_f^+(f(x))\cap J\ne\emptyset\}$, and  set $U=J^*\cap(a_\delta,c)$.
As $\co_f^+(c_-)\cap J=\emptyset$, it follows from Lemma~\ref{Lemma98fgsn54fg} that $F(I)=J$ for every connected component $I$ of $U$. 

Now suppose that $c_-$ is recurrent, $c_-\in\omega_f(c_-)$. In this case, consider the nice interval $(a_{\varepsilon},b_{\varepsilon})$.
As $\#\{j\ge0\,;\,f^j(x_0)\in(a_\varepsilon,c)\}=\infty$ if $x_0\in V$, it follows from Lemma~\ref{LemmaI1i1iate} that there is $a\in(a_\varepsilon,c)$,  such that $a\in\per(f)$ and $\co_f^+(a)\cap(a,b_\varepsilon)=\emptyset$. Set $b=b_\varepsilon$ and $J=(a,b)$. 

Let $U\subset(a,c)$, $F:U\to(a,b)$ and $R:U\to\NN$ be given by Lemma~\ref{Lemma90102555}. In this case, we also have $F(I)=J$ for every connected component of $U$.  

Note that, independently of $c_-$ being recurrent or not, $V\subset U$.
%
Thus, as $SF<0$ and $Leb\big(\{x\in\bigcap_{n\ge0}F^{-n}(U)\,;\,\omega_f(x)\not\subset\{a,b\}\}\big)\ge Leb(V)>0$, it follows from Lemma~\ref{Lemma375945194558} that $\omega_F(x)=[a,b]$ for almost every $x\in V$. This implies that $\co_f^+(x)\cap(c,c+\varepsilon)\ne\emptyset$ for almost every $x\in V$, which contradicts (\ref{Eqoioi9987}).

\end{proof}

\begin{Corollary}\label{CororllaryWithoutPerAt}
Let $f:[0,1]\setminus\{c\}\to[0,1]$ be a $C^3$ contracting Lorenz map with $Sf<0$ and such that $c$ is not in the boundary of a wandering interval. If $f$ does not have attracting periodic-like orbits and $\leb(\BB_1(f))=0$, then $\omega_f(x)=\overline{\co_f^+(c_-)\cup\co_f^+(c_+)}$ for Lebesgue almost every $x\in[0,1]$.
\end{Corollary}
\begin{proof}
This Corollary follows directly from Theorem~\ref{Theorem1aa} and Proposition~\ref{PropositioTudoNoCritico} above.
As $c$ is not in the boundary of a wandering interval, $\cw_f=(\cc_f)_{\infty}^{\pm}=\emptyset$ and also $c^{\star}_\pm=c$, where $(\cc_f)_{\pm}=\{c_-,c_+\}$.
As a consequence, it follows from Theorem~\ref{Theorem1aa} that $\omega_f(x)=\overline{\co_f^+(c_-)}\cup\overline{\co_f^+(c_+)}$ for almost every $x\in[0,1]$, since, by Proposition~\ref{PropositioTudoNoCritico}, we have that $c_-$ and $c_+\in\omega_f(x)$ for almost every $x\in[0,1]$.
\end{proof}

\begin{Lemma}\label{LemmaBB1(f)}
		Let $f:[0,1]\setminus\{c\}\to[0,1]$ be a $C^3$ contracting Lorenz map with negative Schwarzian derivative. If $\leb(\BB_1(f))>0$, then there is a cycle of intervals $A$ such that $\omega_f(x)=A$ for almost every $x\in[0,1].$
\end{Lemma}
	\begin{proof}
Suppose that $\leb(\BB_1(f))>0$. Thus, $f$ has a cycle of intervals $A=I_1\cup\cdots\cup I_n$, $I_j=[a_j,b_j]$. It follows from Corollary~\ref{CorollaryFinitenessOfCycleOfIntervals} that $A$ is the unique cycle of intervals of $f$ and so, $\BB_1(f)=\{x\in[0,1]\,;\,\omega_f(x)=A\}$. Furthermore, $c\in(a_k,b_k)$ for some $1\le k\le n$ (Lemma~\ref{lematres}). 

If $\interior(\omega_f(c_-))$ or $\interior(\omega_f(c_+))$ $\ne$ $\emptyset$, it follows from Lemma~\ref{lematres} that that $\omega_f(c_-)$ or $\omega_f(c_+)=A$. In this case, it follows from Proposition~\ref{PropositioTudoNoCritico} that $\omega_f(x)= A$ for almost every $x\in[0,1]$, proving the lemma.

Thus, we may assume that $\interior(\omega_f(c_\pm))$ $=$ $\emptyset$.
That being so, let $I=(a,b)$ be a connected component of $A\setminus\overline{\co_f^+(c_-)\cup\co_f^+(c_+)}$. Note that $I$ is a nice interval.
As $\co_f^+(c_\pm)\cap I=\emptyset$ and $\leb(\{x\,;\,\omega_f(x)= A\supset I\})>0$, it follows from the interval dichotomy lemma (Lemma~\ref{tudoounadadershchw}) that $\omega_f(x)\supset I$ for almost every $x\in I$.
By the homterval lemma, either $c\in I$ or there is some $\ell\ge1$ such that $f^{\ell}|_I$ is a diffeomorphism and $c\in f^{\ell}(I)$.
Thus, $\omega_f(x)\supset f^{\ell}(I)$ for almost every $x\in I$ and also for almost every $x\in T:=f^{\ell}(I)$.
As a consequence, $\interior(\omega_f(x))\cap\interior(A)\ne\emptyset$ for almost every $x\in T$. Therefore, it follows from Lemma~\ref{LemmaFinitenessOfCycleOfIntervals} that 
$\omega_f(x)=A$ for almost every $x\in T$.
Let $T'=\{x\in T\,;\,\omega_f(x)=A\}$.
As $T$ is an open interval containing $c$, it follows from (\ref{Eqks9hvf}) that $\leb(\bigcup_{n\ge0}f^{-n}(T))=1$ and so, $\leb(\bigcup_{n\ge0}f^{-n}(T'))=1$, since $f_*\leb$ is absolutely continuous.
That is, $\leb(\BB_1(f))=1$ and $\omega_f(x)=A$ for almost every $x\in[0,1]$.
	\end{proof}

\begin{Lemma}\label{CherryA}
	Let $f:[0,1]\setminus\{c\}\to[0,1]$ be a $C^3$ contracting Lorenz map with $Sf<0$ and $\leb(\BB_1(f))=0$.
	If $\exists\,\delta>0$ such that $c_-$ and $c_+\in\omega_f(x)$ for every $x\in(c-\delta,c+\delta)\setminus\co_f^-(c)$, then $c_-\in\omega_f(c_+)$ and $c_+\in\omega_f(c_-)$.
\end{Lemma}
	\begin{proof}		
Let $(a,b)$ be the maximal open interval $J$ containing $c$ and such that $c\in\omega_f(x)$ for every $x\in J\setminus\co_f^-(c)$. Notice that $(a,b)$ is a nice interval.
\begin{claim}
	$c\in\omega_f(c_\pm)$
\end{claim}
\begin{proof}[Proof of the claim]
Suppose by contradiction that $c\notin\omega_f(c_-)$ (the case when $c\notin\omega_f(c_+)$ is analogous). In this case, set $V=(c,b)\setminus\co_f^-(c)\subset(a,b)$ and $R:V\to(a,b)$ by $R(x)=\min\{j\ge1\,;\,f^j(x)\in(c,b)\}$. For each $x\in V$, define $I_x$ as the maximal open interval $J$ containing $x$ and such that $f^{R(x)}(I_x)\subset (a,b)$. Let $x\in V$ and write $(p,q)=I_x$. If $f^{R(x)}(I_x)\subsetneqq(a,b)$, then $\exists0\le j<R(x)$ such that (1) $c=f^{j}(p)<b<f^{j}(x)$ and $a<f^{R(x)}(p_+)<b$ or (2) $a\le f^{j}(p)<f^{j}(q)=c$ and $a<f^{R(x)}(q_-)=f^{R(x)-j}(c_-)<b$. In the case (1), we get that $a<f^{R(x)}(p_+)<f^{R(x)-j}(b)<f^{R(x)}(x)<b$ which contradicts the fact that $(a,b)$ is a nice interval. The second case says that $a<f^{R(x)-j}(c_-)<b$, in contradiction to our assumption. 

Thus, $f^{R(x)}(I_x)=(a,b)$ $\forall\,x\in V$. As $(a,b)$ is a nice interval, it follows that $I_x=I_y$ whenever $I_x\cap I_y\ne\emptyset$.
Therefore, the map $F:V\to(a,b)$ given by $F(y)=f^{R(x)}(y)$ for $y\in I_x$ and $x\in V$ is well defined.
As $SF<0$, $V$ is $F$ positive invariant, $\leb(V)=|b-c|>0$ and $c\in\omega_F(x)$ $\forall\,x\in V$,
it follows from Lemma~\ref{Lemma375945194558} that $\omega_F(x)=[a,b]$ for almost every $x\in(a,b)$. As this implies that $\leb(\BB_1(f))>0$, we get a contradiction.
\end{proof}

As $f$ does not have a periodic-like attractor, there is some $n\ge1$ such that $f^n(c_\pm)\notin\co_f^-(c)$.
Moreover, as $c\in\omega_f(c_\pm)$ and $\exists m_0,m_1\ge n$ such that $f^{m_1}(c_-)$ and $f^{m_1}(c_+)\in (c-\delta,c+\delta)\setminus\co_f^-(c)$.
Thus, $c_-$ and $c_+\in \omega_f(f^{m_0}(c_-))=\omega_f(c_-)$.
In the same way,  $c_-$ and $c_+\in \omega_f(f^{m_0}(c_+))=\omega_f(c_+)$, proving the lemma.
		 
\end{proof}

\begin{Lemma}
\label{Lemma987tfevbntn9g}
	Let $f:[0,1]\setminus\{c\}\to[0,1]$ be a $C^3$ contracting Lorenz map with $Sf<0$, $\leb(\BB_1(f))=0$ and such that $c$ is not in the boundary of a wandering interval. If $f$ does not have attracting periodic-like orbits, then $c_-\in\omega_f(c_+)\text{ and }c_+\in\omega_f(c_-)$.
\end{Lemma}

\begin{proof}
Suppose by contradiction that $c_+\notin\omega_f(c_-)$, the other case being analogous. Let $\delta>0$ be so that $(c,c+\delta)\cap\overline{\co_f^+(c_-)}=\emptyset$.

If $c\in\omega_f(x)$ for every $x\in(c-\delta,c)\setminus\co_f^-(c)$ or for every $x\in(c,c+\delta)\setminus\co_f^-(c)$,
then it follows from  Lemma~\ref{Lemma85826jfuio6} that there is an interval $(a,b)\subset(c-\delta,c+\delta)$ containing $c$ such that $c_-$ and $c_+\in\omega_f(x)$ for every $x\in(a,b)\setminus\co_f^-(c)$. Thus, it follows from Lemma~\ref{CherryA} that $c_+\in\omega_f(c_-)$, contradicting our hypothesis. 

So, for each $\delta>0$ there exist $c-\delta<a_0<c<b_0<c+\delta$ such that $\overline{\co_f^+(a_0)}\not\ni c\notin \overline{\co_f^+(b_0)}$. Letting $(a_1,b_1)$ be the connected component of $[0,1]\setminus\overline{\co_f^+(a_0)}$ and $(a_2,b_2)$ being the connected component of $[0,1]\setminus\overline{\co_f^+(b_0)}$, we have that both $(a_1,b_1)$ and $(a_2,b_2)$ are nice intervals and that $c-\delta<a_1<c<b_2<c+\delta$. Thus, $(a_{\delta},b_{\delta}):=(a_1,b_1)\cap(a_2,b_2)$ is a nice interval containing $c$ and contained in $(c-\delta,c+\delta)$.


Let $(c,b)^*=\{x\in(c,b)\,;\,\co_f^+(f(x))\cap(c,b)\ne\emptyset\}$. Note that $\leb((c,b)^*)=|c-b|$. Given $x\in(c,b)^*$, let $R(x)=\min\{j\ge1\,;\,f^j(x)\in(c,b)\}$.
Also, let $I_x$ be the maximal open interval containing $x$ and such that $f^{R(x)}|_{I_x}$ is a diffeomorphism and $f^{R(x)}(I_x)\subset(a,b)$. 

We claim that $f^{R(x)}(I_x)=(a,b)$. Indeed, write $(\alpha,\beta)=I_x$ and suppose by contradiction that $f^{R(x)}(I_x)\subsetneqq(a,b)$.
Thus, $\exists\,0\le j<R(x)$ such that $f^j(\alpha)=c$ and $f^{R(x)}(\alpha)=f^{R(x)-j}(c_+)\in(a,b)$ or $f^j(\beta)=c$ and $f^{R(x)}(\beta)=f^{R(x)-j}(c_-)\in(a,b)$.
First, assume that $f^j(\alpha)=c$. In this case, as $f$ is orientation preserving, we get that $f^j((I_x))=(c,f^j(\beta))$.
As $j<R(x)$, $f^j(x)\notin(c,b)$. So, $b\in f^j(I_x)$, but this contradicts the fact that $(a,b)$ is a nice interval. Now assume that $f^j(\beta)=c$. In such case, $c<f^{R(x)}(x)<f^{R(x)}(\beta)=f^{R(x)-j}(c_-)<b$, which contradicts that $(c,c+\delta)\cap\co_f^+(c_-)=\emptyset$.

As $(a,b)$ is a nice interval, it follows from the maximality of $I_x$ that $I_x=I_y$ whenever $I_x\cap I_y\ne\emptyset$. Thus, $F:(c,b)^*\to(a,b)$ given by $F(y)=f^{R(x)}(y)$ for $y\in I_x$ is well defined.
As $c\in\omega_F(x)$ for almost every $x\in(a,b)^*$, it follows from Lemma~\ref{Lemma375945194558} that $\omega_F(x)=[a,b]$ for almost every $x\in(a,b)$. This implies that $\omega_f(x)\supset[a,b]$ for almost every $x\in[0,1]$. That is, $\leb(\BB_1(f))=1$, contradicting our assumption.
\end{proof}

\begin{Corollary}\label{CorollaryMassa}
	Let $f:[0,1]\setminus\{c\}\to[0,1]$ be a $C^3$ contracting Lorenz map with $Sf<0$, $\leb(\BB_1(f))=0$ and such that $c$ is not in the boundary of a wandering interval. If $f$ does not have attracting periodic-like orbits, then
	\begin{enumerate}
		\item $c_-\in\omega_f(c_-)=\overline{\co_f^+(c_-)}=\overline{\co_f^+(c_+)}=\omega_f(c_+)\ni c_+$;
		\item $A:=\overline{\co_f^+(c_-)}$ is a transitive Cantor set;
		\item $\omega_f(x)=A$ for almost every $x\in[0,1]$.
			\end{enumerate}
	\end{Corollary}
	\begin{proof}
Item (1) is a direct consequence of Lemma~\ref{Lemma987tfevbntn9g}. To prove the second item, note that $\overline{\co_f^+(c_-)}=\overline{\co_f^+(v)}$, where $v=f(c_-)$.
So, $v\in\omega_f(v)=A$, which implies that $A$ is transitive. Moreover, as $v\notin\per(f)$ and $v\in\bigcup_{j\ge0}f^{-j}([0,1]\setminus\{c\})$, it follows from Lemma~\ref{LemmaRecorrente} in the Appendix that $A$ is perfect.
If $A$ is not totally disconnected, then $\interior(A)\ne\emptyset$ and this implies by Proposition~\ref{PropositioTudoNoCritico} that $\interior(\omega_f(x))\supset\interior(A)\ne\emptyset$ for almost every $x$. That is, $\leb(\BB_1(f))=1$, which contradicts our hypothesis. So, $A$ is compact, perfect and totally disconnected.
Finally, item (3) follows straightforwardly from Corollary~\ref{CororllaryWithoutPerAt}.

	\end{proof}

\subsubsection*{Contracting Lorenz maps with periodic attractors} 
By Singer's Theorem \cite{Si}, it follows that a contracting Lorenz map can have at most two attracting periodic-like orbits.
Nevertheless, it is not obvious  that the union of the basins of attraction of the attracting periodic-like orbits contains almost every point.


\begin{Lemma}\label{LemmaDuasBacias}
Let $f:[0,1]\setminus\{c\}\to[0,1]$ be a $C^3$ contracting Lorenz map with negative Schwarzian derivative. Suppose that $\{A_j\,;\,1\le j\le t\}$ are the periodic-like attractors of $f$, where $1\le t\le 2$ is the number of them. If $\delta>0$ such that $\leb(((c-\delta,c)\cup(c,c+\delta))\setminus(\bigcup_{j}\beta_f(A_j))=0$, then $\leb(\bigcup_{j}\beta_f(A_j))=1$.
\end{Lemma}
\begin{proof}
Let $J=(a,b)$ be the maximal open interval containing $c$ such that $\leb((a,b)\setminus(\beta_f(A_1)\cup\beta_f(A_2)))=0$. It is easy to check that the maximality of $(a,b)$ implies that $\big(\co_f^+(a)\cup\co_f^+(b))\cap(a,b)=\emptyset$ and also that $a,b\in\per(f)$. 

Similarly to the proof of Lemma~\ref{LemmaSinglePerAtt}, we claim that $\co_f^+(x)\cap J=\emptyset$ for almost   every $x\in[0,1]$. To show this, consider $g:[0,1]\setminus\cc_g\to[0,1]$, defined by 
\begin{equation}\label{EgGLorCont}
g(x)=
\begin{cases}
f(x) & \text{ if }x\notin(a,b)\\
\lambda_a (f(x)-f(a)) +f(a) & \text{ if }x\in(a,c)\\
\lambda_b (f(x)-f(b)) & \text{ if }x\in(c,b)\\
\end{cases}
\end{equation}
where $\lambda_a=(1-f(a))/|f((a,c))|$, $\lambda_b=f(b)/|f((c,b))|$ and  $\cc_g=\{c\}\cup\partial J$, see Figure~\ref{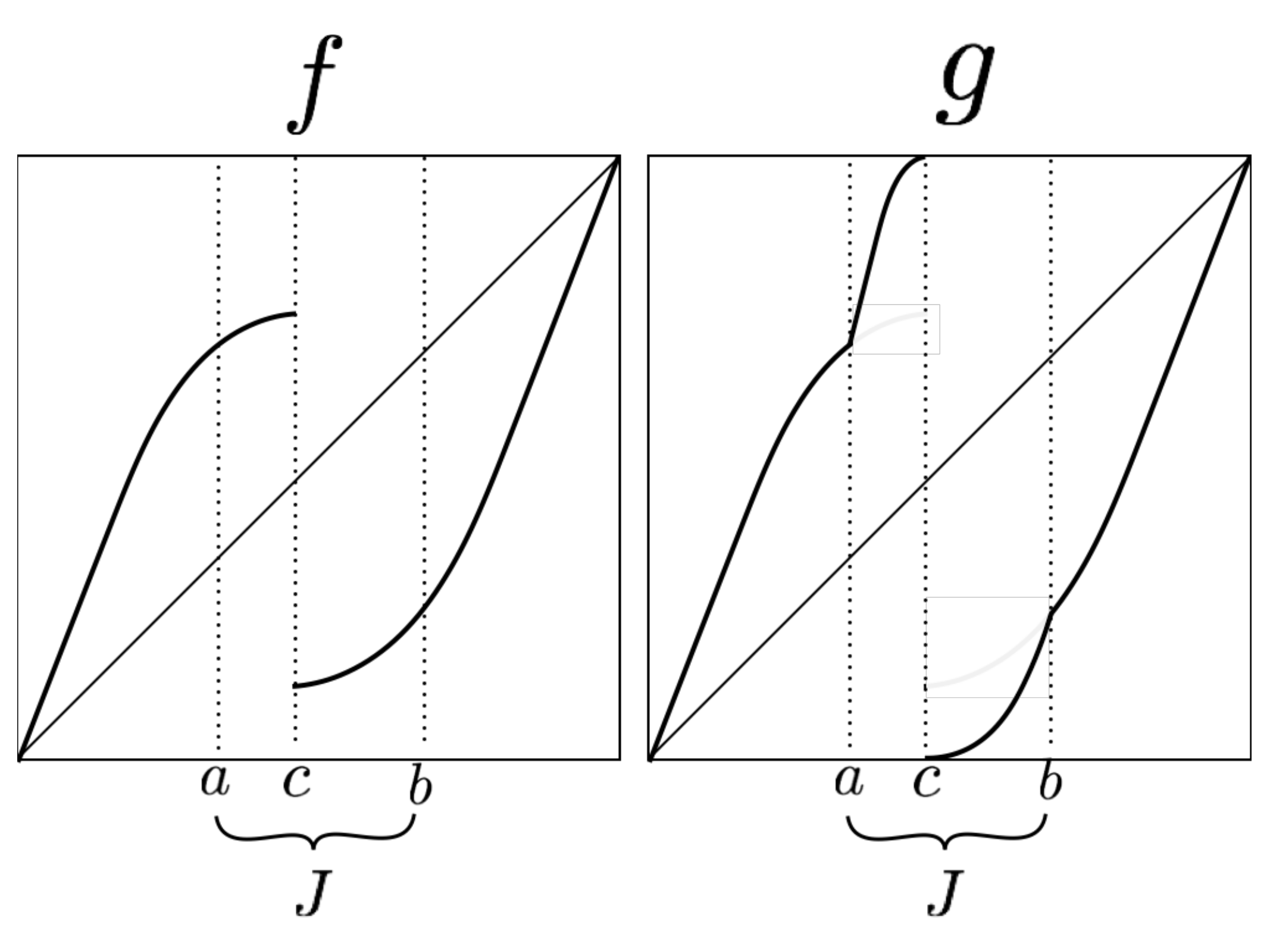}.

\begin{figure}
\begin{center}\includegraphics[scale=.22]{OperacaLorenz.pdf}\\
\caption{}
\label{OperacaLorenz.pdf}
\end{center}
\end{figure}

Letting $\cu=[0,1]\setminus\BB_0(f)$ and assuming by contradiction that $\leb(\cu)>0$, one can show (following the proof of Lemma~\ref{LemmaSinglePerAtt} in the Appendix) the existence of a connected component $I_{t_0}$ of $([0,1]\setminus J)\setminus(\co_f^+(a)\cup\co_f^+(b))$ such that $\leb(\{x\in\cu\,;\,\omega_g^+(x)\cap I_{t_0})\ne\emptyset\}\big\})>0$ and that $I_{t_0}\cap(\co_g^+(\cv_g)=\emptyset$. Thus, by the interval dichotomy lemma (Proposition \ref{tudoounadadershchw}), $\omega_g(x)\supset I_{t_0}$ for almost every $x\in I_{t_0}$.
By the homterval lemma, we get $n\ge0$ such that $g^n|_{I_{t_0}}$ is a diffeomorphism and $\cc_g\cap g^n(I_{t_0})=\{a,b,c\}\cap g^n(I_{t_0})\ne\emptyset$ and so, $g^n(I_{t_0})\cap J\ne\emptyset$. As a consequence, the orbit (with respect to $g$ and also to $f$) of almost every point of $\cu\cap I_{t_0}$ intersects $J$, contradicting the definition of $\cu$. \end{proof}

\begin{Proposition}[Periodic-like attractors for contracting Lorenz maps]\label{PropositionPerLikLor}
Let $f:[0,1]\setminus\{c\}\to[0,1]$ be a $C^3$ contracting Lorenz map with negative Schwarzian derivative. If $f$ has a periodic-like attractor $A_1$, then either $\leb(\beta_f(A_1))=1$ or else there is a second periodic-like attractor $A_2$ such that $\leb(\beta_f(A_1)\cup\beta_f(A_2))=1$.
\end{Proposition}
\begin{proof}
Let $A_1$ be a periodic-like attractor for $f$. By Singer's Theorem \cite{Si}, there is $\varepsilon>0$ such that $(c-\varepsilon,c)$ or $(c,c+\varepsilon)\subset\beta_f(A_1)$. Suppose for instance that $(c,c+\varepsilon)\subset\beta_f(A_1)$, the other case being similar. Let $(c,b)$ be the maximal open interval containing $(c,c+\varepsilon)$ and contained in $\beta_f(A_1)$.

Firstly suppose that $\exists\,\delta>0$ such that $(c-\delta,c)$ belongs to the basin of attraction of a periodic-like attractor $A_2$ ($A_2$ may be equal to $A_1$). We then apply Lemma~\ref{LemmaDuasBacias} and conclude the proof.

Thus, we may assume that, for every $\delta>0$, $(c-\delta,c)$ is not contained in the basin of attraction of a periodic attractor. In particular, this implies, by Singer's Theorem, that $A_1$ is the unique periodic-like attractor of $f$.

\begin{claim}
If $(a,c)$ is a homterval for some $a<c$, then $\leb(\beta_f(A_1))=1$.
\end{claim}
\begin{proof}[Proof of the claim]
Let $I=(a',c)$ be the maximal homterval containing $(a,c)$. As $I$ is a homterval, $\omega_f(x)=\omega_f(y)$, $\forall\,x,y\in I$. Thus,  if $I\cap\beta_f(A_1)\ne\emptyset$, then $I\subset\beta_f(A_1)$ and by Lemma~\ref{LemmaDuasBacias} we get that $\leb(\beta_f(A_1))=1$.

Now, let us show that the assumption $I\cap\beta_f(A_1)=\emptyset$ leads to a contradiction. Indeed, if $I\cap\beta_f(A_1)=\emptyset$, then $f^n(J)\cap(c,b)=\emptyset$ $\forall\,n\ge0$.
By the maximality of $I$, either $a'=0$ or $f^{\ell}(a')=c$ for some $\ell\ge1$. As $f^n(J)\cap(c,b)=\emptyset$ $\forall\,n\ge0$, we necessarily have $a'=0$. As $f^2|_I$ is a diffeomorphism, $c\notin f(I)=(0,\lim_{\varepsilon\downarrow0}f(c-\varepsilon))$. This implies that $f(I)\subset I$ and therefore, $(0,c)$ is contained in the basin of attraction of some fixed-like point $p\in[0,c]$, and as we are assuming that $A_1$ is the unique periodic-like attractor, $A_1=p$ and $I=(0,c)\subset\beta_f(p)$, contradicting the assumption.

\end{proof}

Let us suppose, by contradiction, that $\leb([0,1]\setminus\beta_f(A_1))>0$. Thus, it follows from the claim above that $f$ can not have a wandering interval $J$ with $c\in\partial J$. Also, by Lemma~\ref{lematres}, $f$ does not admit a cycle of intervals. Thus, it follows from Corollary~\ref{CorollaryTirandoMane} that
\begin{equation}\label{Eqokd5gna0kmn}c_-\in\omega_f(x)\text{ for almost all }x\in[0,1]\setminus\beta_f(A_1).
\end{equation}

Furthermore, it follows from Corollary~\ref{CorollaryCritnoPoco} that
	$\omega_f(x)\subset\overline{\co_f^+(c_-)}$ for almost all $x\in[0,1]\setminus\beta_f(A_1)$.
Thus, we have that
\begin{equation}
	\omega_f(x)=\overline{\co_f^+(c_-)},\text{ for almost all }x\in[0,1]\setminus\beta_f(A_1).
\end{equation}
In particular,
$$c_-\in\overline{\co_f^+(c_-)}\,\text{ and }\,\leb(V(a))>0,\,\,\forall a\in[0,c),$$
where $V(a)=\{x\in(a,c)\,;\,\#(\co_f^+(x)\cap(a,c))=\infty\}$.
Furthermore,  as $c_-=f^n(c_-)$ some $n\ge1$ would implies that $A_2=\{c,f(c_-),\cdots,f^{n-1}(c_-)$ is a second attracting periodic-like orbit with contradicts our assumption, we get that\begin{equation}
c_-\in\omega_f(c_-)
\end{equation}


\begin{claim}
If $\#A_1\ge2$, then $c\notin A_1$.
\end{claim}
\begin{proof}[Proof of the claim]
Suppose that $c\in A_1$ and let $n\ge2$ be the period of $c_+$.
Let $T:=(c,t)$ the be the maximal interval such that $f^n|_T$ in a diffeomorphism.
We claim that $f^n(t_-)>t$. Indeed, by the maximality of $T$ either $t=1$ or $\exists\,1\le\ell<n$ such that $f^n(t)=c$.
As $t=1$ implies that $n=1$, we conclude that $f^{\ell}(t)=c$, for some $1\le\ell<n$.
As $f$ is an orientation preserving map, we get that $f^{\ell}(T)=f^{\ell}((c,t))=(f^\ell(c_+),c))$ and also that $f^n(T)=(f^n(c_+),f^n(t_-))=f^{n-\ell}((f^{\ell}(c_+),c))=(f^{n}(c_+),f^{n-\ell}(c_-))$.
Thus, if $f^n(t_-)\le t$, then either $t_-$ is a periodic-like point with $c_-\in\co_f^+(t_-)$ or $c<f^{n-\ell}(c_-)=f^n(t_-)<t$.
The first case is impossible because it implies the existence of a second attracting periodic-like orbit, and we are assuming that $A_1$ is the unique one.
On the other hand, $c<f^{n-\ell}(c_-)=f^n(t_-)<t$ implies that $c<f^{k\,n}(x)<f^{(k-1)n}(x)<\cdots<f^n(x)<x$ for every $x\in(c,t)$ and this means that $T=(c,t)\subset\beta_f(A_1)$. As a consequence, $T'=f^{\ell}(T)\subset\beta_f(A_1)$ which contradicts the assumption that $(c-\delta,c)$ is not contained in the basin of attraction of a periodic-like attractor, $\forall\,\delta>0$.
\end{proof}

Notice that $A_1\cap(0,c)=\emptyset$ if and only if $A_1$ is an attracting fixed-like point $q\in[c,1]$. As we are assume that $p$ is the unique attracting periodic-like point it is easy to see that if $A_1$ is a fixed-like point, then $f(c_-)>c$ and that $(c,1)\supset\beta_f(q)$. In this case one can conclude easily that $\beta_f(q)=(0,1)$.

So, we may assume that $A_1\cap(0,c)\ne\emptyset$ and, by the claim just above, we get that $c\notin A_1$.

Let $J:=(p,q)$ be the connected component of $(0,1)\setminus A_1$ containing $c$. Thus, $J$ is a nice interval containing $c$ and $p\in\per(f)$.

By Singer's Theorem \cite{Si}, $(c,q)\subset\beta_f(A_1)$ and so, $\co_f^+(c_-)\cap(c,q)=\emptyset$. As we also have that $q\in\per(f)$,  
$c_-$ does not belong to the basin of attraction of $A_1$ (the unique periodic-like attractor of $f$) and $c\in\overline{\co_f^+(c_-)\cap(0,c)}$, we can consider $U\subset(p,c)$, $F:U\to(p,q)$ and $R:U\to\NN$ as in Lemma~\ref{Lemma90102555}.
Note that $V(p)\subset U$. Let $\cc_0\subset(c,q)$ be a finite set and $g:(c,q)\setminus\cc_0\to(c,q)$ be any $C^3$ orientation preserving local diffeomorphism with $S g<0$.
Set $\cu=U\cup(c,q)\setminus\cc_0$ and $G:\cu\to(p,q)$ by
$$G(x)=\begin{cases}
F(x) & \text{ if }x\in U\\
g(x) & \text{ if }x\in(c,q)\setminus\cc_0
\end{cases}
$$

Because $G^n(x)=F^n(x)$ $\forall\,x\in V(p)$ and $\forall\,n\in\NN$, we get $G(V(p))\subset V(p)$. Let $\cp$ be the collection of connected components of $\cu$. As, $G(P)=(p,q)$ $\forall\,P\in\cp$, $SG<0$ and $Leb\big(\bigcap_{n\ge0}G^{-n}(\cu)\big)\ge Leb(V(p))>0$, it follows from Lemma~\ref{Lemma375945194558} that $\omega_F(x)=\omega_G(x)=[p,q]$ for almost every $x\in V(p)$. In particular, the $\omega$-limit set of almost every $x\in V(p)$ is a cycle of intervals. This is a contradiction, as $f$ can not admit a cycle of intervals. Thus, we necessarily have $\leb([0,1]\setminus\beta_f(A_1))=0$, which concludes the proof.

\end{proof}

\subsubsection*{Proof of Theorem~\ref{mainTheoremMTheoLORB}} 
If $f$ has attracting periodic-like orbits, then the proof follows from Proposition~\ref{PropositionPerLikLor} above. 
If $c$ belongs to the boundary of a wandering interval, then theorem is a consequence of Lemma~\ref{LemmaWCritico}.

If $f$ does not admit attracting periodic-like orbits, $\leb(\BB_1(f))=0$ and $c$ is not in the boundary of a wandering interval, then it follows from Corollary~\ref{CorollaryMassa}. In particular, this is the case if $f$ is non-flat (see Definition~\ref{Non-flat} in the Appendix), it does not admit attracting periodic-like orbits and $\leb(\BB_1(f))=0$. Indeed, if $f$ is non-flat, it follows from Lemma~\ref{LemmaStP} in the Appendix that $c$ cannot belongs to the boundary of a wandering interval.

On the other hand, if $\leb(\BB_1(f))>0$ the Theorem is a corollary of Lemma~\ref{LemmaBB1(f)}.

\subsubsection*{Further comments} In the present paper we have dealt with metrical attractors. In this kind of attractor, the basin of attraction is required to have positive Lebesgue measure. Another viewpoint is to consider {\em topological attractors}, namely the ones whose basins of attraction are residual in some open set of the domain, see Milnor \cite{Milnor:1985ut}. Notice that the topological attractors and the metrical ones may not be the same, as in the case of wild attractors  \cite{BKNvS}.

For non-flat $C^3$ maps of the interval $[0,1]$ with negative Schwarzian derivative it is known that the number of topological attractors is bounded by the number of critical points, as shown by Guckenheimer \cite{G79} and Blokh and Lyubich \cite{BL89eg}. For maps with discontinuities, Brand\~ao showed in \cite{Br} the topological version of Theorem \ref{mainTheoremMTheoLORB}, that is, contracting Lorenz maps have either one single topological attractor or two attracting periodic orbits whose union of basins of attraction is a residual subset of the whole interval. In the context of topological attractors, the question of finiteness of the number of attractors for discontinuous maps with more than one critical point remains open.


\section{Appendix}

\begin{Definition}[Non-flat]\label{Non-flat}
	A $C^3$ contracting Lorenz map $f:[0,1]\setminus\{c\}\to\RR$ is called {\em non-flat} if there exist $\varepsilon>0$, constants $\alpha, \beta\ge1$  and $C^3$  diffeomorphisms $\phi_{0}:[c-\varepsilon,c]\to \text{Im}(\phi_{0})$ and $\phi_{1}:[c,c+\varepsilon]\to\text{Im}(\phi_{1})$ such that $$f(x)=\begin{cases}f(c_-)+\big(\phi_{0}(x-c)\big)^\alpha&\text{ if }x\in(c-\varepsilon,c)\cap(0,1)\\
f(c_+)+\big(\phi_{1}(x-c)\big)^\beta&\text{ if }x\in(c,c+\varepsilon)\cap(0,1)\end{cases}$$
\end{Definition}

\begin{Lemma}[Lemma 3.36 of \cite{StP}]\label{LemmaStP} If $f:[0,1]\setminus\{c\}\to[0,1]$ is a $C^{2}$ non-flat contracting Lorenz map, then $f$ does not admit a wandering interval $J$ with $c\in\partial J$.
\end{Lemma}

\begin{proof}
	Suppose for instance that, $f$ has a wandering interval $J=(c,c+\varepsilon)$, with $\varepsilon>0$. So, we can modify $f$, see Figure~\ref{errantepng}, to coincide with the original map out of this interval, but being $C^2$ and non-flat (as defined in the introduction of the paper). This way, the modified map is a $C^2$ non-flat map with $f(J)$ being a wandering interval for it, but it can't happen, according to Theorem A of Chapter IV of \cite{MvS}.

\begin{figure}
\begin{center}
\includegraphics[scale=.18]{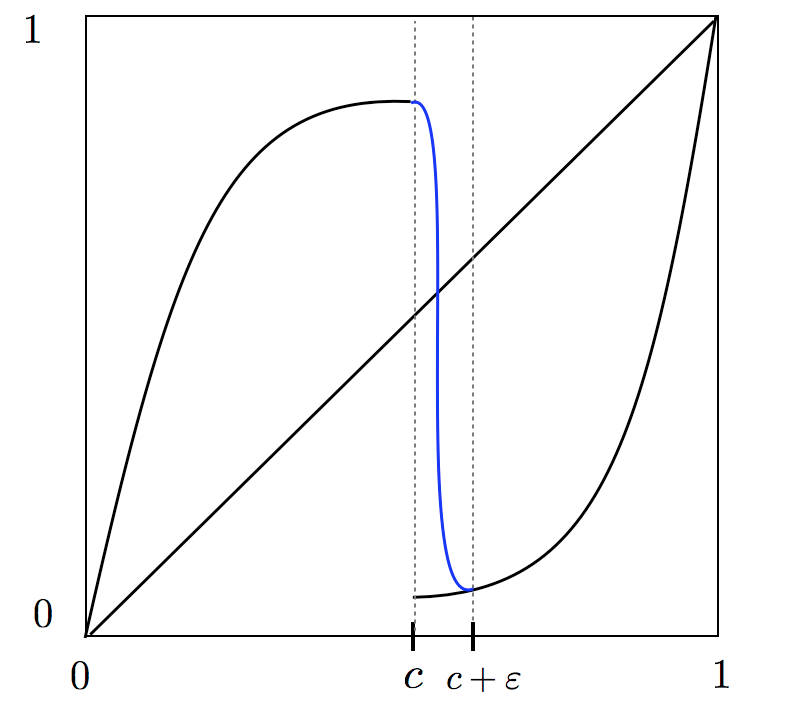}\\
\caption{}\label{errantepng}
\end{center}
\end{figure}

\end{proof}

\begin{Lemma}\label{LemmaRecorrente}
	Let $\XX$ be a compact metric space and $f:U\to\XX$ a continuous map define in an open subset $U\subset\XX$. If $x\in\bigcap_{j\ge0}f^{-j}(U)$ is recurrent, $x\in\omega_f(x)$, then either $x\in\per(f)$ or $\omega_f(x)=\overline{\co_f^+(x)}$ is a perfect set.
\end{Lemma}
\begin{proof}
	Suppose that there are $q\in\omega_f(x)$ and $\varepsilon>0$ such that $\omega_f(x)\cap B_{\varepsilon}(q)=\{q\}$. Let $n_j\nearrow\infty$ be a sequence such that $q=\lim_{j}f^{n_j}(x)$.
	Because of $x\in\omega_f(x)$, we have that $\co_f^+(x)\subset\omega_f(x)$.
	Thus, $\exists\,j_0\ge1$ so that $f^{n_j}(x)=q$ $\forall\,j\ge j_0$. In particular, $q\in \bigcap_{j\ge0}f^{-j}(U)$.
	Therefore, $q=f^{n_{j_0+1}}(x)=f^{n_{j_0+1}-n_{j_0}}(f^{n_{j_0}}(x))=f^{n_{j_0+1}-n_{j_0}}(q)$.
	That is, $q\in\per(f)$. As $x$ is pre-periodic, because $f^{n_{j_0}}(p)=q$, and recurrent, it follows that $p$ is periodic.
\end{proof}

\begin{Lemma}[\cite{Br}]
\label{LemmaFUGAdoCherry}
Let $f:[0,1]\setminus\{c\}\to[0,1]$ be a contracting Lorenz map without periodic-like attractors. If $c\in \omega_f(x)$ for every $x\in(0,1)\setminus\co_f^-(c)$, then $f|_{(f(c_+),f(c_-))}$ is injective.
\end{Lemma}

\begin{proof}
Suppose that $c\in\omega_f(x)$ for every $x\in(0,1)$ and that there are $p_0\in(f(c_+),c)$ and $p_1\in(c,f(c_-))$ such that $f(p_0)=f(p_1)$. Let $p=f(p_0)=f(p_1)$ and $g:(f(c_+),f(c_-))\setminus\{c\}\to(f(c_+),f(c_-))$ given by
$g(x)=
\begin{cases}
p & \text{ if }x\notin(p_0,p_1)\\
f(x) & \text{ if }x\in(p_0,p_1)
\end{cases}
$, see Figure~\ref{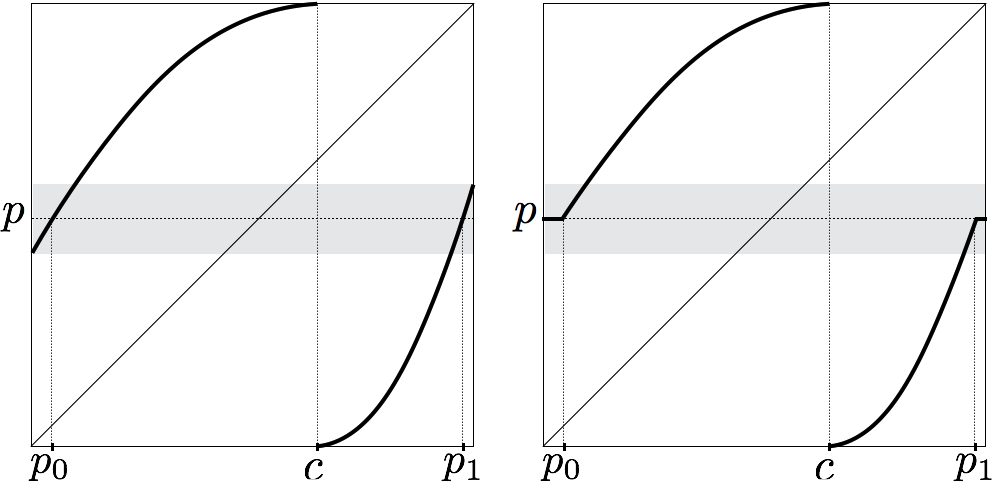}.
\begin{figure}
  \begin{center}\includegraphics[scale=.2]{CherryeCherry-like.png}\\
 \caption{}\label{CherryeCherry-like.png}
  \end{center}
\end{figure}
We claim that there exists $n\ge0$ such that $g^n(p)\notin(p_0,p_1)$. Indeed, $g^n(p)=f^n(p)$ for every $n\ge0$ such that $g^j(p)\in(p_0,p_1)$ $\forall0\le j<n$. As $c\in\omega_f(x)$ $\forall\,x\in(f(c_+),f(c_-))$, eventually $f^n(p)$ is close to $c$ and so, $f^{n+1}(p)\notin(p_0,p_1)$. Therefore, $g^n(p)\notin(p_0,p_1)$ for some $n\ge0$, as claimed. This implies that $g^{n+1}(p)=p$. That is, $g$ has a periodic point and this implies (see \cite{GT85}) that $f$ also has a periodic point in $(f(c_+),f(c_-))$, which is impossible as $c\in\omega_f(x)$ for every $x\in(0,1)$.

\end{proof}

\begin{Lemma}\label{LemmaSinglePerAtt}
Let $f:[0,1]\to[0,1]$ be a $S$-unimodal map having an attracting periodic point $p$. Then $\leb(\beta_f(\co_f^+(p)))=1$.
\end{Lemma}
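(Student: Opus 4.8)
The plan is to show that the basin of attraction of the attracting periodic orbit $\co_f^+(p)$ is, up to a set of measure zero, all of $[0,1]$, by combining the Homterval Lemma, Corollary~\ref{Cor09876111}, and Singer's Theorem. First I would recall that for an $S$-unimodal map the only point of the exceptional set is the critical point $c$, so $\cc_f=\{c\}$ and $\CC_f=\{c-\ii,c+\ii\}$; hence Corollary~\ref{Cor09876111} says that for almost every $x\notin(\BB_0(f)\cup\BB_1(f))$ one has $\omega_f(x)\subset\overline{\RE(\co_f^+(c-\ii))}\cup\overline{\RE(\co_f^+(c+\ii))}=\overline{\co_f^+(c)}$ (the two lateral orbits of a unimodal map agree after one step, since $f$ is continuous and $f'(c)=0$, so $\RE(f(c\pm\ii))=f(c)$). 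So a.e. point is attracted to $\co_f^+(p)$, to a cycle of intervals, or has $\omega$-limit inside $\overline{\co_f^+(c)}$.

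Next I would rule out the cycle-of-intervals alternative and the ``$\omega_f(x)\subset\overline{\co_f^+(c)}$'' alternative using the presence of the attracting periodic orbit. Since $f$ has an attracting periodic point $p$, the immediate basin of $\co_f^+(p)$ is a nonempty open set; by Singer's Theorem the critical value $f(c)$ lies in the basin of $\co_f^+(p)$, hence $\omega_f(c)=\omega_f(f(c))=\co_f^+(p)$ and $\overline{\co_f^+(c)}=\co_f^+(c)\cup\co_f^+(p)$ is a finite set. Therefore the alternative $\omega_f(x)\subset\overline{\co_f^+(c)}$ forces $\omega_f(x)$ to be a finite $f$-invariant set; the only finite invariant closed set reachable this way (by Lemma~\ref{lematres}, a finite $\omega$-limit set has empty interior, so it is not a cycle of intervals) is either a periodic orbit or contains $c$. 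A point $x\notin\co_f^-(\cc_f)$ with $\omega_f(x)$ finite has $\omega_f(x)$ equal to a periodic orbit contained in $\overline{\co_f^+(c)}$; since the only periodic orbit in that finite set is $\co_f^+(p)$ (any other periodic orbit of an $S$-unimodal map with an attractor is repelling, hence not an $\omega$-limit of a non-eventually-periodic point, by the usual Mañé/expansion argument — or directly: a repelling periodic orbit cannot be the whole $\omega$-limit set of a point unless the point is eventually periodic, which is excluded), we get $\omega_f(x)=\co_f^+(p)$, i.e.\ $x\in\beta_f(\co_f^+(p))$.

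It remains to handle $\BB_1(f)$, the set of $x$ with $\omega_f(x)$ a cycle of intervals. By Lemma~\ref{lematres} every cycle of intervals contains $c$ in its interior, hence contains $f(c)$ in its forward orbit; but $f(c)\in\beta_f(\co_f^+(p))$ by Singer's Theorem, and $\co_f^+(p)$ has empty interior, contradicting that $\omega_f(f(c))$ equals the cycle of intervals (a cycle of intervals is forward-invariant with nonempty interior, and $f(c)$ inside it would force $\omega_f(f(c))$ to be that cycle, not the finite set $\co_f^+(p)$). So $\BB_1(f)=\emptyset$ for such $f$. Combining the three cases, almost every $x\in[0,1]$ lies in $\beta_f(\co_f^+(p))$, which gives $\leb(\beta_f(\co_f^+(p)))=1$.

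I expect the main obstacle to be the careful bookkeeping around the lateral orbits $\co_f^+(c\pm\ii)$ and making the elimination of the ``$\omega_f(x)$ a repelling periodic orbit'' subcase airtight — one wants to invoke that an $S$-unimodal map with an attracting periodic orbit has all other periodic orbits repelling (Singer), and that a repelling periodic orbit is not the $\omega$-limit of any point outside $\co_f^-(\per(f))$; alternatively one can bypass this entirely by applying the Homterval Lemma to the complementary intervals of $\overline{\co_f^+(c)}$ once that set is known to be finite, since then every such complementary interval is a homterval and, not being wandering (there are no wandering intervals for $S$-unimodal maps, or by finiteness of $\overline{\co_f^+(c)}$ the interval must eventually map over $c$ or into the basin), lies in $\BB_0(f)\cup\co_f^-(\per(f))$, and the $\co_f^-(\per(f))$ part is reduced to the attracting orbit by the same Singer argument. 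I would present whichever of these two routes turns out to be shorter.
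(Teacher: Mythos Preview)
Your overall strategy --- apply Corollary~\ref{Cor09876111} to trap $\omega_f(x)$ inside $\overline{\co_f^+(c)}$, then argue this forces $\omega_f(x)=\co_f^+(p)$ --- differs from the paper's, which instead builds an auxiliary map $g$ by stretching $f$ over a neighborhood $J\ni c$ so that all $g$-critical orbits land on $\co_f^+(p)\cup\{0,1\}$, and then applies the Interval Dichotomy Lemma to $g$ on the finitely many components of $([0,1]\setminus J)\setminus\co_f^+(p)$. The paper's construction here is a warm-up for the parallax technique of Section~\ref{SecMainProof}; your route is more direct once Corollaries~\ref{Cor09876111} and~\ref{CorollaryOmegaIsolado} are available.

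However, there is a concrete error: you claim $\overline{\co_f^+(c)}=\co_f^+(c)\cup\co_f^+(p)$ is \emph{finite}. This is false unless $c$ is eventually periodic; in general $\co_f^+(c)$ is an infinite sequence accumulating on $\co_f^+(p)$, so $\overline{\co_f^+(c)}$ is countably infinite with derived set exactly $\co_f^+(p)$. Both your main line (``$\omega_f(x)$ finite, hence a periodic orbit'') and your alternative Homterval route (``complementary intervals of a finite set'') rely on this false finiteness. The repair is short: every point of $\co_f^+(c)\setminus\co_f^+(p)$ is isolated in $\overline{\co_f^+(c)}$, so Corollary~\ref{CorollaryOmegaIsolado} gives $\omega_f(x)\cap\big(\co_f^+(c)\setminus\co_f^+(p)\big)=\emptyset$ for a.e.\ $x\notin\BB_0(f)\cup\BB_1(f)$; combined with Corollary~\ref{Cor09876111} this yields $\omega_f(x)\subset\co_f^+(p)$, hence $\omega_f(x)=\co_f^+(p)$, contradicting $x\notin\BB_0(f)$. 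Your side worry about repelling periodic orbits then evaporates, since $\overline{\co_f^+(c)}$ contains no periodic points outside $\co_f^+(p)$. A smaller slip: in ruling out $\BB_1(f)$ you assert that $f(c)\in\cu$ forces $\omega_f(f(c))=\cu$, which does not hold for arbitrary points of a cycle of intervals; the clean argument is that $\co_f^+(p)\subset\cu$ by forward invariance and closedness, so the open immediate basin of $\co_f^+(p)$ meets $\interior(\cu)$, and any dense orbit in the transitive set $\cu$ would enter that basin and get trapped.
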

\begin{proof}
Let $0<c<1$ be the critical point of $f$ and let $\hat{p}=\max\co_f^+(p)$. As $Sf<0$ and $f$ has at most two fixed points, if $\hat{p}=0$, then $\omega_f(x)=0$ for every $x\in[0,1]$. Thus, we may assume that $\hat{p}\ne0$. In particular, $0$ cannot be an attracting periodic point.

Writing $(a,b)=J:=f^{-1}((\hat{p},1])$, we have that $a$ or $b\in\co_f^+(p)$. Furthermore, it follows from Singer's Theorem \cite{Si} that $p$ is the unique attracting periodic orbit of $f$ and that $\omega_f(x)=\co_f^+(p)$, for every $x\in J$ because $[a,c]$ or $[c,b]$ is contained in the local basin of $\co_f^+(p)$.

We claim that $\leb(\cu)=0$, where $\cu:=[0,1]\setminus\BB_0(f)=[0,1]\setminus\beta_f(\co_f^+(p))$. To show this, 
let's consider $g:[0,1]\setminus\cc_g\to[0,1]$ defined by 
$$g(x)=
\begin{cases}
f(x) & \text{ if }x\notin{J}\\
\lambda (f(x)-f(p)) +f(p) & \text{ if }x\in{J}
\end{cases}
$$
where $\lambda=(1-f(p))/(f(c)-f(p))$ and  $\cc_g=\{c\}\cup\partial J$.


Assume, by contradiction, that $\leb(\cu)>0$. Notice that $\co_f^+(x)=\co_g^+(x)$, $\forall\,x\in\cu$. In particular, $\BB_0(g)\cap\cu=\emptyset$.

Let $I_1,\cdots, I_t$ be the connected components of $([0,1]\setminus J)\setminus\co_f^+(p)$ and observe that $\cu\subset I_1\cup\cdots\cup I_t$. In particular, there is $t_0$ so that $\leb(\{x\in\cu\,;\,\#(\co_g^+(x)\cap I_{t_0})=\infty\}\big\})>0$. As $\cu\cap\BB_0(f)=\emptyset$, it follows from Lemma~\ref{Lemmaiuywe4jj} that $$\leb(\{x\in\cu\,;\,\omega_g(x)\cap I_{t_0}\ne\emptyset\})>0.$$

As $\co_g^+(g(a_\pm))=\co_g^+(g(b_\pm))=\co_f^+(p)$ and $\co_g^+(g(c_\pm))=\{0,1\}$, we get that $I_{t_0}\cap\co_g^+(\cv_g)=\emptyset$.
Thus, by the Interval Dichotomy Lemma (Lemma \ref{tudoounadadershchw}), $\omega_g(x)\supset I_{t_0}$ for almost every $x\in I_{t_0}$.
This implies that $I_{t_0}$ does not intersect the basin of attraction of a periodic attractor nor it is a wandering interval.
Thus, by the Homterval Lemma (Lemma \ref{LemmaHomterval}), there is some $n\ge0$ such that $g^n|_{I_{t_0}}$ is a diffeomorphism and $\{a,b,c\}\cap g^n(I_{t_0})\ne\emptyset$. But, this implies that $g^n(I_{t_0})\cap J\ne\emptyset$ and as a consequence the orbit (with respect to $g$ and also $f$) of almost every point of $\cu\cap I_{t_0}$ intersects $J$, contradicting the definition of $\cu$. Therefore, $\cu$ must be a zero measure set, proving the lemma.

\end{proof}

\begin{proof} of Theorem \ref{sunimodal}

By Singer's Theorem \cite{Si}, a $S$-unimodal map $f$ has at most one attracting periodic orbit. Moreover, by lemma \ref{LemmaSinglePerAtt}, the basin of attraction of this periodic orbit has full Lebesgue measure.
Thus, we may assume that $f$ does not have a periodic attractor. That is, $\BB_0(f)=\emptyset$.

As $\co_f^+(c_-)=\co_f^+(c_+)=\co_f^+(c)$, it follows from Corollary~\ref{Cor09876111} that $\omega_f(x)\subset\overline{\co_f^+(c)}$ for almost every $x\in[0,1]\setminus\BB_1(f)$. As we are assuming that $f$ does not have a periodic attractor, in particular it does not have a saddle-node, all periodic orbits are hyperbolic repellers. Applying  Mañe's Theorem \cite{Man85} (or Corollary \ref{CorollaryTirandoMane}), it follows that
\begin{equation}\label{Eqks9hvf}
	c\in\omega_f(x)\text{ for almost every }x\in[0,1].
\end{equation}

\begin{Claim}\label{Claim76ghj45}
	$c\in\overline{\per(f)}$.
\end{Claim}
\begin{proof}[Proof of the claim] If not, let $t=\max\overline{\per(f)}$ and set $I=(a,b)=f^{-1}((t,f(c)])$.
Note that $I$ is a nice interval containing $c$.
From (\ref{Eqks9hvf}), $\leb(I^*)=|a-b|$, where $I^*=\{x\in I\,;\,\co_f^+(f(x))\cap I\ne\emptyset\}$.
Letting $R(x)=min\{j\ge1\,;\,f^j(x)\in I\}$ for $x\in I^*$, and $F(x)=f^{R(x)}(x)$ being the first return map to $I$, it easy so check that $F(J)=I$ for every connected component $J$ of $I^*$ such that $c\notin\overline{J}$.
Furthermore, either $I^*=(a,c)\cup(c,b)$ or $I^*$ has infinitely many connected components.
In the first case, one can show that $t\in\per(f)$, $F=f^n$, where $n$ is the period of $t$, and that $f^n|_{[a,b]}$ is conjugate with an unimodal map without periodic attractors.
In particular, $f^n$ has a fixed point in $(a,b)$, contradicting the definition of $t$.
On the other hand, if $I^*$ has infinitely many connected components, then taking any connected component $J$ of $I^*$ such that $\overline{J}\subset I$, we get that $F(J)=I$ and so,
$\fix(F)\cap I\ne\emptyset$ which implies that $\per(f)\cap(t,1]\ne\emptyset$, contradicting again the definition of $t$.
\end{proof}

\begin{Claim}\label{Claimoiuygvhu8765}
If $c\notin\omega_f(c)$, then $\leb(\BB_1(f))=1$.	
\end{Claim}
\begin{proof}[Proof of the claim]
Suppose that $\overline{\co_f^+(f(c))}\cap(c-\delta,c+\delta)=\emptyset$ for some $\delta>0$. By Claim~\ref{Claim76ghj45} above, we can choose $p_0\in\per(f)\cap(c-\delta,c+\delta)$. Let $I=(p,p')$ be the connected component of $[0,1]\setminus\co_f^+(p_0)$. Thus, $I$ is a nice interval with $c\in I\subset(c-\delta/2,c+\delta/2)$ and $p,p'\in\co_f^+(p_0)$. Let $I^*=\{x\in I\,;\,\co_f^+(f(x))\cap I\ne\emptyset\}$, $R(x)=min\{j\ge1\,;\,f^j(x)\in I\}$ for $x\in I^*$, and $F(x)=f^{R(x)}(x)$ being the first return map to $I$, it easy so check that $F(J)=I$ for every connected component $J$ of $I^*$ and that $\leb(I^*)=\leb(I)=\leb(\bigcap_{j\ge0}F^{-j}(I))$. As $c\in\omega_F(x)$ for almost every $x\in I$, it follows from Lemma~\ref{Lemma375945194558} that $\omega_F(x)=\overline{I}$ for almost every $x\in I$. As $\leb\circ f^{-1}\ll\leb$ and as $\leb(\bigcup_{j\ge0}f^{-j}(I))=1$, it follows that $\leb(\BB_1(f))=1$.
\end{proof}

As a consequence of (\ref{Eqks9hvf}), $\overline{\co_f^+(c)}\subset\omega_f(x)$ almost surely. On the other hand, Corollary~\ref{Cor09876111} implies that $\omega_f(x)\subset\overline{\co_f^+(c)}$ for almost every $x\in[0,1]\setminus\BB_1(f)$. Thus, 
\begin{equation}\label{Eqyyy654edfghj}\omega_f(x)=\overline{\co_f^+(c)}\text{ for almost every }x\in[0,1]\setminus\BB_1(f).
\end{equation}

If $\leb(\BB_1(f))=0$, then  the theorem is proved by taking $A=\overline{\co_f^+(c)}$. Indeed, by (\ref{Eqyyy654edfghj}), we need only to verify that $\overline{\co_f^+(c)}$ is a perfect set, and this follows from  Claim~\ref{Claimoiuygvhu8765}, Lemma~\ref{LemmaRecorrente} and from that fact that  $c\notin\per(f)$, as $f$ does not have periodic attractors.

Therefore, we may suppose that $f$ has a cycle of intervals $\cu=I_1\cup\cdots\cup I_n$, $I_j=[a_j,b_j]$, and also that $\leb(\BB_1(f))>0$. It follows from Corollary~\ref{CorollaryFinitenessOfCycleOfIntervals} that this cycle of intervals is unique. Thus, $\BB_1(f)=\{x\in[0,1]\,;\,\omega_f(x)=\cu\}$. Furthermore, $c\in(a_k,b_k)$ for some $1\le k\le n$ (Lemma~\ref{lematres}). 
\begin{claim}
$\omega_f(x)=\cu$ for almost every $x\in[0,1]$.
\end{claim}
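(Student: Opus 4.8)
The plan is to show that the complement of the basin of the cycle of intervals $\cu$ has zero Lebesgue measure. Write $\cw=[0,1]\setminus\beta_f(\cu)$; by \eqref{Eqyyy654edfghj} we already know that for a.e.\ $x\notin\BB_1(f)$ one has $\omega_f(x)=\overline{\co_f^+(c)}$, so the issue is precisely whether a positive-measure set of points has $\omega_f(x)=\overline{\co_f^+(c)}\subsetneq\cu$, i.e.\ whether $\leb(\cw)>0$ with $\cw$ disjoint from $\BB_0(f)=\emptyset$ and from $\BB_1(f)$ up to measure zero. I would argue by contradiction, assuming $\leb(\cw)>0$, and imitate the ``auxiliary map'' parallax strategy already used in the proof of Lemma~\ref{LemmaSinglePerAtt}.

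First I would set up the auxiliary map. Since $c\in(a_k,b_k)$ and $\cu$ is $f$-invariant and transitive, pick one of the boundary intervals of $\cu$ near $c$ and build $g$ from $f$ by post-composing with an affine rescaling on $J:=f^{-1}(\text{a suitable subinterval adjacent to }\co_f^+(c))$ exactly as in Lemma~\ref{LemmaSinglePerAtt}, arranged so that (i) $g$ agrees with $f$ off $J$, hence $\co_f^+(x)=\co_g^+(x)$ and $\omega_f(x)=\omega_g(x)$ for every $x\in\cw$, since the $f$-orbit of such $x$ never enters $J$ (as $J$ meets $\beta_f(\cu)$ in a full-measure set, or more precisely $J$ is swallowed into $\cu$); and (ii) $\cc_g=\{c\}\cup\partial J$ with the lateral exceptional values of $g$ landing on a small finite set, so that $\RE(\co_g^+(\cv_g))$ is finite, being contained in $\co_f^+(c)\cup\{\text{images of }\partial J\}$. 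Here I must be careful to choose $J$ so that $\partial J\subset\co_f^+(c)$ (using that $f(c)$ or some forward image is an endpoint of an $I_j$), which keeps the new exceptional values confined.

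Next, the combinatorial reduction: let $I_1',\dots,I_t'$ be the connected components of $[0,1]\setminus\big(\overline{\co_f^+(c)}\cup J\big)$; since $\cw\subset\BB_0(f)^c\cap\BB_1(f)^c$ has $\omega_f(x)\subset\overline{\co_f^+(c)}$ a.e., but also (by the definition of $\cw$) $\omega_f(x)\ne\cu$, I would show $\cw$ is, up to measure zero, covered by the $I_j'$ together with $\co_f^-(c)$. Then some $I_{t_0}'$ carries positive measure of points whose $g$-orbit returns to it infinitely often; by Lemma~\ref{Lemmaiuywe4jj} (applicable since $\BB_0(f)=\emptyset$) this gives $\leb(\{x\in\cw\,;\,\omega_g(x)\cap I_{t_0}'\ne\emptyset\})>0$. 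Since $I_{t_0}'\cap\RE(\co_g^+(\cv_g))=\emptyset$ by construction, the Interval Dichotomy Lemma (Lemma~\ref{tudoounadadershchw}) forces $\omega_g(x)\supset I_{t_0}'$ for a.e.\ $x\in I_{t_0}'$. In particular $I_{t_0}'$ is not a wandering interval and not in the basin of a periodic attractor, so by the Homterval Lemma (Lemma~\ref{LemmaHomterval}) some $g^n|_{I_{t_0}'}$ is a diffeomorphism with $g^n(I_{t_0}')$ meeting $\{c\}\cup\partial J$, hence meeting $J$ or the interior of $\cu$; either way the $g$-orbit (= $f$-orbit) of a.e.\ point of $\cw\cap I_{t_0}'$ enters $\beta_f(\cu)$, contradicting $\cw\cap\beta_f(\cu)=\emptyset$. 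Therefore $\leb(\cw)=0$, which together with \eqref{Eqyyy654edfghj} and the uniqueness of the cycle of intervals proves $\omega_f(x)=\cu$ a.e.

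The main obstacle I anticipate is bookkeeping around the interval $J$ and the set $\overline{\co_f^+(c)}$: I need $\partial J\subset\co_f^+(c)$ so that $\RE(\co_g^+(\cv_g))$ stays finite and avoids the chosen gap interval $I_{t_0}'$, and I need to know that the $f$-orbits of points in $\cw$ genuinely avoid $J$ (equivalently, that $J$ is absorbed into $\cu$), so that $f$ and $g$ have the same dynamics on $\cw$. Both points are geometric facts about how a cycle of intervals sits around its critical point and should follow from transitivity of $\cu$ together with Lemma~\ref{lematres}, but they are the step where the argument could go wrong if $J$ is chosen carelessly; the rest is a faithful repetition of the parallax mechanism already established.
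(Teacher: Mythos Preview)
Your approach has a genuine gap at its core. First, note that $\cw=[0,1]\setminus\beta_f(\cu)$ is trivially null: since $\overline{\co_f^+(c)}\subset\cu$, equation~\eqref{Eqyyy654edfghj} already gives $\omega_f(x)\subset\cu$ for a.e.\ $x$. The set whose nullity is actually at stake is $[0,1]\setminus\BB_1(f)=\{x:\omega_f(x)\ne\cu\}$. With that correction the auxiliary-map trick still fails: if $J$ contains $c$ (as it does in Lemma~\ref{LemmaSinglePerAtt}), then for a.e.\ $x\notin\BB_1(f)$ one has $c\in\omega_f(x)=\overline{\co_f^+(c)}$, so the $f$-orbit of $x$ visits $J$ infinitely often and $\co_g^+(x)\ne\co_f^+(x)$. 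The assertion ``$J$ is swallowed into $\cu$'' yields only $J\subset\beta_f(\cu)$, not $J\subset\BB_1(f)$; it does nothing to keep those orbits out of $J$. Moreover the step ``$\leb(\{x\in\cw:\omega_g(x)\cap I_{t_0}'\ne\emptyset\})>0$'' is false outright: by construction $I_{t_0}'$ is a gap of $\overline{\co_f^+(c)}$, hence disjoint from $\omega_f(x)$ for a.e.\ such $x$, and Lemma~\ref{Lemmaiuywe4jj} does not help here since the endpoints of $I_{t_0}'$ need not be periodic-like.

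The paper's argument is shorter and works directly with $f$, no auxiliary map needed. After first disposing of the case $\interior(\omega_f(c))\ne\emptyset$ (which forces $\overline{\co_f^+(c)}=\cu$ and finishes immediately via~\eqref{Eqyyy654edfghj}), one picks a gap interval $I\subset\cu\setminus\omega_f(c)$. The second alternative of the Interval Dichotomy Lemma is then triggered \emph{not} by the bad set $[0,1]\setminus\BB_1(f)$ but by the standing hypothesis $\leb(\BB_1(f))>0$: for $x\in\BB_1(f)$ one has $\omega_f(x)=\cu\supset I$. Hence $\omega_f(x)\supset I$ for a.e.\ $x\in I$; the homterval lemma pushes this forward to an open neighbourhood $T\ni c$ on which $\omega_f(x)=\cu$ a.e., and Ma\~n\'e's theorem spreads this to all of $[0,1]$. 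The key idea you are missing is to use the \emph{assumed} positive-measure set $\BB_1(f)$ to force the dichotomy, rather than trying to extract information from its complement.
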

\begin{proof}[Proof of the claim]
Suppose by contradiction that $\leb([0,1]\setminus\BB_1(f))>0$. In this case, we have that $\interior(\omega_f(c))$ $=$ $\emptyset$. Otherwise, by the Lemma~\ref{lematres}, $\omega_f(c)$ is a cycle of intervals, and so $\omega_f(c)=\cu$. As a consequence, $c\in\omega_f(c)$ and, then, $\overline{\co_f^+(c)}=\omega_f(c)=\cu$. By (\ref{Eqyyy654edfghj}) and the definition of $\BB_1(f)$, we get $\omega_f(x)=\cu$ for almost every $x\in[0,1]$, i.e., $\leb([0,1]\setminus\BB_1(f))=0$, contradicting the assumption. 

As $\interior(\omega_f(c))=\emptyset$, we can choose an open interval $I=(a,b)$ contained in $\cu\setminus\omega_f(c)$. As $\co_f^+(c)\cap I=\emptyset$ and we know that $\leb(\{x\,;\,\omega_f(x)\supset\cu\supset I\})>0$, it follows from the interval dichotomy lemma (Lemma~\ref{tudoounadadershchw}) that $\omega_f(x)\supset I$ for almost every $x\in I$. By the homterval lemma, either $c\in I$ or there is some $\ell\ge1$ such that $f^{\ell}|_I$ is a diffeomorphism and $c\in f^{\ell}(I)$. Thus, $\omega_f(x)\supset f^{\ell}(I)$ for almost every $x\in I$ and also for almost every $x\in T:=f^{\ell}(I)$. As a consequence, $\interior(\omega_f(x))\cap\interior(\cu)\ne\emptyset$ for almost every $x\in T$. Therefore, it follows from Lemma~\ref{LemmaFinitenessOfCycleOfIntervals} that 
$\omega_f(x)=\cu$ for almost every $x\in T$. Let $T'=\{x\in T\,;\,\omega_f(x)=\cu\}$. As $T$ is an open interval containing $c$, it follows from (\ref{Eqks9hvf}) that $\leb(\bigcup_{n\ge0}f^{-n}(T))=1$ and so, $\leb(\bigcup_{n\ge0}f^{-n}(T'))=1$, since $f_*\leb$ is absolutely continuous. That is, $\leb([0,1]\setminus\BB_1(f))=0$, contradicting our assumption.
\end{proof}
It follows from the claim that $A:=\cu$ is a minimal attractor and $\beta_f(A)$ contains almost every point of $[0,1]$, concluding the proof of the theorem.
\end{proof}

\end{document}